\date{\today}
\newcommand{\Z}{{\mathbb Z}}
\newcommand{\R}{{\mathbb R}}
\newcommand{\C}{{\mathbb C}}
\newcommand{\D}{{\mathbb D}}
\newcommand{\N}{{\mathbb N}}
\newcommand{\M}{{\mathbb M}}
\newcommand{\PP}{{\mathbb P}}
\newcommand{\CD}{{\mathcal D}}
\newcommand{\CE}{{\mathcal E}}
\newcommand{\CF}{{\mathcal F}}
\newcommand{\CS}{{\mathcal S}}
\newcommand{\CK}{{\mathcal K}}
\def\t{{\theta}}
\def\b{{\beta}}
\def\a{{\alpha}}
\def\e{{\varepsilon}}
\def\beq{\begin{equation}}
	\def\eeq{\end{equation}}
\newcommand{\SL}{{\mathrm{SL}}}
\let\strokel\l
\renewcommand\l{\lambda}
\newtheorem{theorem}{Theorem}
\newtheorem{remark}{Remark}
\newtheorem{lemma}{Lemma}
\newtheorem{defi}{Definition}
\newtheorem{corollary}{Corollary}
\newtheorem{prop}{Proposition}
\begin{document}
	
	\title[Domination and Jacobi Operators ]{On the Correspondence Between Domination and the Spectrum of Jacobi Operators}
	
		\author[K.\ Alkorn]{Kateryna Alkorn}
		
		\address{Department of Mathematics, University of California--Riverside, CA~92521, USA}
		
	\email{kateryna@math.ucr.edu}
	
	\author[Z.\ Zhang]{Zhenghe Zhang}
	
	\address{Department of Mathematics, University of California--Riverside, CA~92521, USA}
	
	\email{zhenghe.zhang@ucr.edu}

	\thanks{The authors were in part supported by NSF grant DMS-1764154.}
	
	\begin{abstract} In this paper, we first develop a notion of dominated splitting for $\M(2,\C)$-sequences and show it is a stable property under $\|\cdot \|_\infty$-perturbation. Then we show an energy parameter belongs to the spectrum of a Jacobi operator, possibly singular, if and only if the associated Jacobi cocycle does not admit dominated splitting. This generalizes the results obtained by the second author \cite{zhang2} in the scenario of Schr\"odinger operators. Finally, we consider dynamically defined Jacobi operators whose base dynamics is only assumed to be topologically transitive. We show an energy parameter belongs to the spectrum of the operator defined by the base point with a dense orbit if and only if the dynamically defined Jacobi cocycle does not admit dominated splitting. This includes the original Johnson's theorem obtained by R. Johnson \cite{johnson} for Sch\"odinger operators and the main theorem obtained by C. Marx \cite{marx} for Jacobi operators as special cases.
	\end{abstract}

	\maketitle
	\tableofcontents
	
	\section{Introduction and Statement of Main Results}
	
	In this paper we study the relationship between the spectrum of the Jacobi operator and the dynamics of its cocycle. We set $\ell^p(\Z)=\ell^p(\Z,\C)$ for $p\in\Z_+$ or $p=\infty$ and let $\ell^p(\Z,\R)\subset\ell^p(\Z)$ be real-valued sequences. The Jacobi operator $J_{a,b}:\ell^2(\Z)\rightarrow\ell^2(\Z)$ is given by
	\beq\label{eq:operators2}
	(J_{a,b} \psi)(n)=\overline{a_{n-1}}\psi(n-1)+a_n\psi(n+1)+b_n\psi(n),
	\eeq
	where $\psi=(\psi_n)_{n\in\Z}\in\ell^2(\Z)$, $a=(a_n)_{n\in\Z}\in\ell^{\infty}(\Z)$, and $b=(b_n)_{n\in\Z}\in\ell^{\infty}(\Z,\R)$. We say the Jacobi operator is singular if $a_n=0$ for some $n\in\Z$.
	
	Jacobi operators are bounded self-adjoint operators. Hence, the spectrum $\sigma(J_{a,b})$ of the operator $J_{a,b}$ which is defined as
	$$
	\sigma(J_{a,b}):=\{E \in \C : J_{a,b}-E \textit{ does not have bounded inverse} \}
	$$
	is a nonempty compact subset of $\R$. The resolvent, $\rho(J_{a,b})$, of the operator $J_{a,b}$ is defined as: 
	$$
	\rho(J_{a,b}) = \C- \sigma(J_{a,b}) 
	$$
	
	Another type of Jacobi operators we will study are dynamically defined operators which are given as follows. Let $\Omega$ be a compact metric space and $T$ be a homeomorphism on $\Omega$. Let $a\in C(\Omega,\C)$ and $b\in C(\Omega,\R)$, i.e. $a$ and $b$ are complex and real valued continuous functions on $\Omega$, respectively. Then for each $\omega\in\Omega$, we may define a Jacobi operator as 
	\beq\label{eq:dynamical_jacobi}
	(J_\omega\psi)(n)=\overline{a(T^{n-1}\omega)}\psi(n-1)+a(T^n\omega)\psi(n+1)+b(T^n\omega)\psi(n).
	\eeq
	In this case, we let $\sigma(J_\omega)$ and $\rho(J_\omega)$ denote its spectrum and resolvent set, respectively.
	
Jacobi operators are natural extension of one-dimensional discrete Schr\"odinger operators, the definition of which is to set $a_n=1$ for all $n\in\Z$ in \eqref{eq:operators2}. Just like Schr\"odinger operators, Jacobi operators arise naturally in various ways in mathematics and physics. For instance, they play a fundamental role in the study of completely integrable nonlinear lattices, in particular the Toda lattice and its modified counterpart, the Kac-van Moerbeke lattice. In the study of inverse spectral theory, one has to go to Jacobi operators even if one starts with Schr\"odinger operators. We refer the readers to \cite{teschl} for more information regarding Jacobi operators.

In many ways,  the study of spectral analysis of Schr\"odinger or Jacobi operators may be reduced to the study of  dynamics of the associated cocycles. One of the basic relations between spectral analysis of operators and dynamics of cocycles is the Johnson's type of theorems where one can identify the spectrum by these energies whose cocycles admit certain dynamics structure. For Schr\"odinger operators, this dynamics structure is the so-called uniform hyperbolicity. In the scenario of Jacobi operators, it turns out that the dynamics structure is the so-called dominated splitting which generalizes uniform hyperbolicity. These type of correspondences in particular play a key role in the analysis of the so-called Cantor spectrum phenomenon. For works related to these topics, we refer the readers to \cite{aviladamanikbochi1, aviladamanikbochi2, wangzhang} in the Schr\"odinger case and to \cite{fillmanongzhang, avilajitomirskayamarx, jitomirskayamarx, marx} in the Jacobi case and the references therein. Cantor spectrum phenomenon, on the other hand, has a deep physcis background. It is closely related to the so-called quantum hall effect, see e.g. \cite{hofstadter, thouless2}. The present paper concerns Johnson type of theorem for Jacobi operators. In the next two subsections, we first review the Johnson's theorem for Schr\"odinger operators. Then we discuss the existing Johnson's type of results for Jacobi operators due to C. Marx \cite{marx}. Finally, we state  the main results of this paper and discuss the strategy of their proofs.

\subsection{Review of Johnson's theorem for Schr\"odinger operators}\label{ss:johnson}
Let $b\in C(\Omega,\R)$. Consider the Schr\"odinger operators $H_\omega$ with potentials given by $b_n=b(T^n\omega)$, $n\in\Z$. Again, to define such operators, we just need to set $a(\omega)=1$ for all $\omega\in\Omega$ in \eqref{eq:dynamical_jacobi}. Let $A^E:\Omega\to\SL(2,\R)$ be the Schr\"odinger cocycle map at the energy $E$. For $\omega\in\Omega$, we define its orbit under $T$ to be $\mathrm{Orb}(\omega):=\{T^n\omega: n\in\Z\}$. Then the following theorem goes back to \cite[Theorem 3.1]{johnson}:

\begin{theorem}\label{t:johnson}
	Let $(T, \Omega)$ be topologically transitive. In other words, there is a $\omega_0\in\Omega$ such that $\overline{\mathrm{Orb}(\omega_0)}=\Omega$. Then $(T,A^E)$ is uniformly hyperbolic if and only if $E\in\rho(H_{\omega_0})$.
	\end{theorem}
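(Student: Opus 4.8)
The plan is to establish the two implications separately, using the characterization of the spectrum via dominated splitting (which in the $\SL(2,\R)$ setting is exactly uniform hyperbolicity of the cocycle along a single orbit) together with the stability of uniform hyperbolicity under $\|\cdot\|_\infty$-perturbations.

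\medskip
\emph{Uniform hyperbolicity of $(T,A^E)$ $\Rightarrow$ $E\in\rho(H_{\omega_0})$.} Here I would first observe that uniform hyperbolicity of the cocycle $(T,A^E)$ over the \emph{whole} space $\Omega$ restricts to give a dominated splitting — i.e., exponential dichotomy — for the cocycle along the single orbit $\mathrm{Orb}(\omega_0)$. Since the Schr\"odinger operator is a special (non-singular) Jacobi operator, the main correspondence theorem proved earlier in the paper (the generalization of \cite{zhang2}) applies: $E\in\sigma(H_{\omega_0})$ if and only if the Jacobi/Schr\"odinger cocycle generated by the sequence $b_n = b(T^n\omega_0)$ fails to admit dominated splitting. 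Uniform hyperbolicity over $\Omega$ supplies that splitting, so $E\notin\sigma(H_{\omega_0})$, i.e. $E\in\rho(H_{\omega_0})$. This direction does not use transitivity.

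\medskip
\emph{$E\in\rho(H_{\omega_0})$ $\Rightarrow$ uniform hyperbolicity of $(T,A^E)$.} This is where transitivity enters and is the main obstacle. Starting from $E\in\rho(H_{\omega_0})$, the correspondence theorem gives that the cocycle along $\mathrm{Orb}(\omega_0)$ admits a dominated splitting, equivalently a uniform exponential dichotomy with uniform constants along the orbit. The task is to promote this splitting along one (dense) orbit to a genuine dominated splitting of the continuous cocycle $(T,A^E)$ over all of $\Omega$. The idea is: the stable and unstable directions $E^{s}(T^n\omega_0), E^{u}(T^n\omega_0)\in\mathbb{P}^1$ vary along the orbit, and by the uniform dichotomy estimates these assignments are uniformly continuous on the orbit — more precisely, two nearby points of $\Omega$ whose orbit segments stay close for a long time must have nearby stable/unstable directions, by the usual cone-field / contraction argument. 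Since $\overline{\mathrm{Orb}(\omega_0)}=\Omega$ and $A^E$ is continuous, one extends $E^{s},E^{u}$ continuously to $\Omega$; continuity of the cocycle then transfers the invariance and the exponential contraction/expansion estimates from the orbit to its closure, with the same uniform constants. This yields a (globally) dominated splitting for $(T,A^E)$, which for $\SL(2,\R)$-cocycles is uniform hyperbolicity. Alternatively, and perhaps more cleanly, one can argue by contradiction using the stability result from earlier in the paper: if $(T,A^E)$ were \emph{not} uniformly hyperbolic, then by the characterization of the (non-uniformly hyperbolic) regime one finds points $\omega\in\Omega$ — and by density points $T^k\omega_0$ arbitrarily close to them — at which arbitrarily long orbit segments fail the dichotomy estimate; an $\|\cdot\|_\infty$-perturbation / approximation argument then shows the cocycle along $\mathrm{Orb}(\omega_0)$ itself cannot admit a dominated splitting, contradicting $E\in\rho(H_{\omega_0})$ via the correspondence theorem.

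\medskip
The technical heart of the argument is the passage from ``dichotomy along a dense orbit with uniform constants'' to ``dichotomy on the closure'': one must check that the splitting subspaces depend continuously (indeed extend continuously) and that no uniformity is lost in the limit. This is exactly the point where the $\|\cdot\|_\infty$-stability of dominated splitting proved in the earlier part of the paper does the work, since a limit of orbit segments is an $\|\cdot\|_\infty$-small perturbation of a nearby orbit segment. Once global domination is in hand, the equivalence ``dominated splitting $=$ uniform hyperbolicity for $\SL(2,\R)$-valued cocycles'' (the cocycle takes values in $\SL(2,\R)$ since $a\equiv 1$) closes the proof.
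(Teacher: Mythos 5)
Your proposal is correct, and the forward implication is exactly the paper's: restrict the cocycle to $\mathrm{Orb}(\omega_0)$, invoke the sequence version (Theorem~\ref{t:johnson_sequence}), and note that transitivity plays no role there (cf.\ Corollary~\ref{c:johnson_no_transitivity}). For the converse you choose the route of extending the splitting from the dense orbit to all of $\Omega$ by a cone-field/contraction limiting argument. The paper mentions this route (uniform hyperbolicity is equivalent to a uniform exponential growth condition, which passes from a dense orbit to its closure by continuity of $A^E_n$), but the argument it actually develops in detail --- and generalizes to the Jacobi setting in Section~\ref{s:dynamicalVersion} --- runs the other way: Proposition~\ref{p:dense_orbit_spectrum} gives $\sigma(H_\omega)\subset\sigma(H_{\omega_0})$, hence $d(E,\sigma(H_\omega))\ge d(E,\sigma(H_{\omega_0}))>0$ for \emph{every} $\omega$, so the sequence theorem applies at every base point simultaneously and produces invariant directions everywhere at once, with all constants controlled solely by $d(E,\sigma(H_{\omega_0}))$; continuity of the resulting splitting is then deduced from the uniform estimates (the analogue of Lemma~\ref{l:equiv_conditions_DS_dyna}). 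The trade-off is this: your route requires a well-definedness check that the limits of $E^{s},E^{u}$ along different approximating orbit segments agree (your uniform contraction supplies this, so there is no gap), while the paper's route avoids any limit construction of subspaces but instead needs the quantitative fact that the hyperbolicity constants depend only on the distance to the spectrum. Both arguments close the proof.
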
 
	
	We wish to point out that Johnson's theorem does not necessarily need to involve a base dynamics $(T,\Omega)$. Correspondingly, the notion of uniform hyperbolicity may be defined for a sequence of $\SL(2,\R)$-matrices.  Let $H_b$ be the Schr\"odinger operator with the potential $b\in\ell^\infty(\Z)$ and let $A^E:\Z\to\SL(2,\R)$ be the cocycle map defined on $\Z$. Then the following theorem is from \cite[Theorem 3]{zhang2}:
	
	\begin{theorem}\label{t:johnson_sequence}
    $A^E:\Z\to\SL(2,\R)$ is uniformly hyperbolic if and only if $E\in\rho(H_{b})$.
	\end{theorem}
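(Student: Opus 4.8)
The plan is to prove the two implications separately, reducing everything to the standard resolvent identity and a Combes--Thomas / Green's function estimate on the one hand, and to a polynomially bounded solution argument on the other. Throughout, write $A^E_n$ for the transfer matrix (the one-step cocycle map at site $n$) and $A^E_{(n,m)}$ for the products, so that a formal solution $u$ of the difference equation $u(n+1)+u(n-1)+b_nu(n)=Eu(n)$ satisfies $\binom{u(n+1)}{u(n)}=A^E_{(n,0)}\binom{u(1)}{u(0)}$; recall that uniform hyperbolicity of $A^E\colon\Z\to\SL(2,\R)$ means there are constants $C\ge 1$, $\lambda>1$ and a continuous (here just bounded) splitting $\C^2=E^s(n)\oplus E^u(n)$ invariant under the cocycle with $\|A^E_{(n,m)}|_{E^s(m)}\|\le C\lambda^{-(n-m)}$ and $\|A^E_{(m,n)}|_{E^u(m)}\|\le C\lambda^{-(n-m)}$ for $n\ge m$.

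First I would show $E\in\rho(H_b)\Rightarrow$ uniform hyperbolicity. Given $E\notin\sigma(H_b)$, the resolvent $(H_b-E)^{-1}$ is bounded, and by a Combes--Thomas estimate its Green's function $G(n,m;E)=\langle\delta_n,(H_b-E)^{-1}\delta_m\rangle$ decays exponentially: $|G(n,m;E)|\le Ce^{-\gamma|n-m|}$ with $C,\gamma>0$ depending only on $\operatorname{dist}(E,\sigma(H_b))$ and $\|b\|_\infty$. From this I would build the stable and unstable subspaces explicitly: for each $n$, let $u^{+}_n$ be the solution that is (exponentially) square-summable at $+\infty$ and $u^{-}_n$ the one square-summable at $-\infty$, obtained from the columns of the Green's function (e.g.\ $m\mapsto G(m,0;E)$ restricted to $m\ge 1$ and $m\le 0$); these exist and are nonzero precisely because $E\notin\sigma(H_b)$, and they cannot be proportional (else $E$ would be an eigenvalue). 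Setting $E^s(n)=\C\binom{u^+_{n+1}}{u^+_n}$ and $E^u(n)=\C\binom{u^-_{n+1}}{u^-_n}$ gives the invariant splitting; transversality of the two lines plus the exponential decay/growth inherited from the Green's function bound yields the quantitative contraction/expansion estimates, after checking the angle between $E^s(n)$ and $E^u(n)$ is bounded below uniformly in $n$ (this uses the constancy of the Wronskian, $\langle u^+,u^-\rangle$-type quantity, together with the uniform Green's-function bounds). This is the direction I expect to require the most care: one must extract \emph{uniform-in-$n$} constants for both the growth rates and the angle from the single operator-norm bound on the resolvent, and the singular case $a_n=0$ does not arise here since $a_n\equiv 1$, but one should still be careful that the transfer-matrix formalism is set up so the above manipulations are legitimate.

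For the converse, uniform hyperbolicity $\Rightarrow E\in\rho(H_b)$, I would argue contrapositively: if $E\in\sigma(H_b)$, then since $H_b$ is self-adjoint and bounded, by the standard Weyl-sequence / Schnol-type criterion there is a polynomially bounded (indeed, can be taken bounded) nonzero generalized eigenfunction, i.e.\ a nonzero $u\colon\Z\to\C$ with $\sup_n|u(n)|<\infty$ solving $(H_b-E)u=0$ in the formal sense. Then $v(n)=\binom{u(n+1)}{u(n)}$ is a nonzero bounded orbit of the cocycle, $v(n)=A^E_{(n,0)}v(0)$ with $\|v(n)\|$ bounded above and below. But uniform hyperbolicity forces every nonzero orbit to grow exponentially either as $n\to+\infty$ or as $n\to-\infty$: decompose $v(0)=v^s+v^u$; if $v^u\neq 0$ then $\|v(n)\|\to\infty$ as $n\to+\infty$, and if $v^u=0$ then $v^s\neq 0$ and $\|v(n)\|\to\infty$ as $n\to-\infty$. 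Either way this contradicts boundedness, so $E\notin\sigma(H_b)$. Assembling the two implications completes the proof; I would then remark that this sequence-version is the skeleton for the dynamically-defined Theorem~\ref{t:johnson}, the only extra ingredient there being continuity of the splitting in $\omega$ and a minimality/transitivity argument to transfer hyperbolicity from the dense orbit to all of $\Omega$.
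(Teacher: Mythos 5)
Your first implication ($E\in\rho(H_b)\Rightarrow$ uniform hyperbolicity) follows essentially the route the paper takes for the Jacobi generalization in Section~\ref{ss:NotUHtoSpectrum}: Combes--Thomas decay of the Green's function, invariant lines spanned by vectors built from the columns $g_j(\cdot)$ of the resolvent, and a Wronskian computation to get a uniform lower bound on the angle between them. Specialized to $a_n\equiv 1$ (so none of the singular cases arise) this is sound.

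The converse as you wrote it has a genuine gap. You claim that $E\in\sigma(H_b)$ produces a nonzero polynomially bounded (even bounded) solution of $H_bu=Eu$. The Schnol--Berezanskii theorem does not give this pointwise: it gives only $\overline{\CE_g(H_b)}=\sigma(H_b)$, i.e.\ the generalized eigenvalues are \emph{dense} in the spectrum; an individual spectral point need not carry any polynomially bounded solution. (The Weyl criterion supplies $\ell^2$ approximate eigenfunctions, not generalized eigenfunctions, so it does not rescue the claim.) The paper closes exactly this gap by using the openness of uniform hyperbolicity/domination in the energy (the analogue of Theorem~\ref{l.opennessDS}): if $A^E$ is uniformly hyperbolic, then so is $A^{E'}$ for every $E'$ near $E$; every nontrivial solution at every such $E'$ grows exponentially along a subsequence, so a whole neighborhood of $E$ contains no generalized eigenvalues and therefore $E\in\overline{\CE_g(H_b)}^{\complement}=\rho(H_b)$. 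You should either insert this stability step (you never invoke openness in this direction), or replace the generalized-eigenfunction argument by a direct estimate showing that uniform hyperbolicity forces $\|(H_b-E)\psi\|\ge c\|\psi\|$ for all finitely supported $\psi$, which contradicts the Weyl criterion at a spectral point.
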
 
	
	Assume $(T,A^E)$ is a uniformly hyperbolic cocycle defined over a base dynamics $(T,\Omega)$. Then for each $\omega\in\Omega$ $A^E:\Z\to\SL(2,\R)$, where $A^E(n)=A^E(T^n\omega)$, is a uniformly hyperbolic $\SL(2,\R)$-sequence. Thus the following corollary is a direct consequence of Theorem~\ref{t:johnson_sequence}:
	
	\begin{corollary}\label{c:johnson_no_transitivity}
		If $(T,A^E)$ is uniformly hyperbolic, then $E\in \bigcap_{\omega\in\Omega}\rho(H_\omega)$.
		\end{corollary}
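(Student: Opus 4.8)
The plan is to derive Corollary~\ref{c:johnson_no_transitivity} directly from Theorem~\ref{t:johnson_sequence} by freezing an arbitrary base point and passing to its orbit. First I would fix $\omega\in\Omega$ and set $b_n:=b(T^n\omega)$ for $n\in\Z$. Since $b\in C(\Omega,\R)$ and $\Omega$ is compact, the sequence $b=(b_n)_{n\in\Z}$ lies in $\ell^\infty(\Z,\R)$, and by the very definition of the dynamically defined operator in \eqref{eq:dynamical_jacobi} (with $a\equiv 1$) the Schr\"odinger operator $H_b$ with this potential coincides with $H_\omega$. Correspondingly, the Schr\"odinger cocycle sequence at energy $E$ attached to $H_b$ is the sequence $n\mapsto A^E(n):=A^E(T^n\omega)$, i.e. the restriction of the cocycle map $A^E:\Omega\to\SL(2,\R)$ to $\mathrm{Orb}(\omega)$.

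The one point requiring care is to check that this sequence is uniformly hyperbolic in the sense of Theorem~\ref{t:johnson_sequence}. This is built into uniform hyperbolicity of the cocycle $(T,A^E)$ over the compact base $\Omega$: there is a continuous $T$-invariant splitting $\R^2=E^s(\eta)\oplus E^u(\eta)$ and constants $C>0$, $\l>1$, \emph{independent of $\eta\in\Omega$}, such that the forward iterates contract $E^s$ and expand $E^u$ at rate $\l^{\pm n}$ up to the factor $C$. Restricting the splitting and these uniform exponential estimates along the orbit of $\omega$ produces exactly an invariant splitting and the required exponential bounds for the sequence $A^E(n)=A^E(T^n\omega)$, so this sequence is a uniformly hyperbolic $\SL(2,\R)$-sequence. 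Theorem~\ref{t:johnson_sequence} then yields $E\in\rho(H_\omega)$.

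Finally, since $\omega\in\Omega$ was arbitrary, $E\in\bigcap_{\omega\in\Omega}\rho(H_\omega)$, which is the claim; note that no transitivity of $(T,\Omega)$ was used. I do not expect a genuine obstacle here: the argument is essentially bookkeeping, and the only mildly delicate step is confirming that the constants $C,\l$ in the definition of cocycle uniform hyperbolicity may be chosen uniformly over $\Omega$, so that they persist when one restricts to a single (possibly non-closed) orbit — this is the standard compactness fact and, in the formulation of uniform hyperbolicity used in the paper, is already part of the definition.
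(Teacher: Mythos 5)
Your argument is exactly the paper's: the paper observes that for each $\omega\in\Omega$ the restriction $A^E(T^{(\cdot)}\omega):\Z\to\SL(2,\R)$ is a uniformly hyperbolic sequence (the constants being uniform by definition of uniform hyperbolicity of the cocycle), and then invokes Theorem~\ref{t:johnson_sequence} for each $\omega$. Your proposal is correct and matches this route, including the remark that no transitivity is needed.
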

	Note that Corollary~\ref{c:johnson_no_transitivity} does not have any restrictions on the base dynamics. It is also clear that  Theorem~\ref{t:johnson_sequence} implies: \\
	
	\textit{If $E\in\bigcap_{\omega\in\Omega}\rho(H_\omega)$, then $A^{E}(T^{(\cdot)}\omega):\Z\to\SL(2,\R)$ is uniformly hyperbolic for all $\omega\in\Omega$.}\\
	
	In general, for a continuous cocycle map $A:\Omega\to\SL(2,\R)$, one may not be able to lift the uniform hyperbolicity from sequences (i.e. uniform hyperbolicity of $A(T^{(\cdot)}\omega):\Z\to\SL(2,\R)$ for all $\omega\in\Omega$) to uniform hyperbolicity of  cocycle $(T,A)$. This is because one may lose the uniformity of certain constants. However, when the base dynamics is topologically transitive, one can indeed pass the uniform hyperbolic from sequences to the cocycle. In fact, if $\overline{\mathrm{Orb}(\omega_0)}=\Omega$, then the uniform hyperbolicity of $A(T^{(\cdot)}\omega):\Z \to\SL(2,\R)$ implies the uniform hyperbolicity of $(T,A)$. This is due the fact that uniform hyperbolicity is equivalent to a uniform exponential growth condition which can easily be passed uniformly from a dense orbit to the whole space.
	
	Alternatively, one may use the following relatively simple fact, see e.g. \cite[Theorem 6]{zhang2}, to show that if $E\in\rho(H_{\omega_0})$, then $(T,A^E)$ is uniformly hyperbolic.
	\begin{prop}\label{p:dense_orbit_spectrum}
		If $\overline{\mathrm{Orb}(\omega_0)}=\Omega$, then $\sigma(H_\omega)\subset\sigma(H_{\omega_0})$ for all $\omega\in\Omega$.
		\end{prop}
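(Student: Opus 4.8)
The plan is to combine two elementary facts: the spectrum of a Schr\"odinger operator is translation invariant in the potential, and it cannot abruptly expand under convergence of the potential on growing windows. First I would record that for each $\omega\in\Omega$ the operators $H_\omega$ and $H_{T\omega}$ are unitarily equivalent via the shift $(U\psi)(m)=\psi(m+1)$ on $\ell^2(\Z)$: a one-line computation gives $UH_\omega U^{-1}=H_{T\omega}$, and iterating shows $\sigma(H_{T^n\omega_0})=\sigma(H_{\omega_0})$ for every $n\in\Z$.

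Next, fix $\omega\in\Omega$ and $E\in\sigma(H_\omega)$. Since $H_\omega$ is bounded and self-adjoint and finitely supported vectors are dense in $\ell^2(\Z)$, the Weyl criterion produces unit vectors $\psi^{(j)}\in\ell^2(\Z)$ supported in $[-R_j,R_j]$ with $\|(H_\omega-E)\psi^{(j)}\|\to 0$ as $j\to\infty$. Because $\overline{\mathrm{Orb}(\omega_0)}=\Omega$, choose $n_k$ with $T^{n_k}\omega_0\to\omega$. The operator $H_{T^{n_k}\omega_0}-H_\omega$ is diagonal with entries $b(T^m T^{n_k}\omega_0)-b(T^m\omega)$; by continuity of each $T^m$, continuity of $b$, and uniform continuity of $b$ on the compact space $\Omega$, the quantity $\varepsilon_k^{(j)}:=\sup_{|m|\le R_j}\vabs{b(T^m T^{n_k}\omega_0)-b(T^m\omega)}$ tends to $0$ as $k\to\infty$ for each fixed $j$. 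Hence $\|(H_{T^{n_k}\omega_0}-E)\psi^{(j)}\|\le\|(H_\omega-E)\psi^{(j)}\|+\varepsilon_k^{(j)}$. Using the standard bound $\mathrm{dist}(E,\sigma(A))\le\|(A-E)\phi\|$ valid for any unit vector $\phi$ and self-adjoint $A$, together with $\sigma(H_{T^{n_k}\omega_0})=\sigma(H_{\omega_0})$, this gives $\mathrm{dist}(E,\sigma(H_{\omega_0}))\le\|(H_\omega-E)\psi^{(j)}\|+\varepsilon_k^{(j)}$. Letting $k\to\infty$ and then $j\to\infty$ yields $\mathrm{dist}(E,\sigma(H_{\omega_0}))=0$, so $E\in\sigma(H_{\omega_0})$ since the latter set is closed.

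I do not expect a genuine obstacle; this is a soft argument and the only point needing mild care is the order of limits: one must first send $k\to\infty$ with the approximate eigenvector $\psi^{(j)}$ held fixed (so that only finitely many diagonal entries are in play and uniform continuity of $b$ applies), and only afterwards send $j\to\infty$. An alternative packaging would observe that the window-convergence of potentials is exactly strong convergence $H_{T^{n_k}\omega_0}\to H_\omega$ of uniformly bounded self-adjoint operators, hence strong resolvent convergence, and then invoke upper semicontinuity of the spectrum under strong resolvent limits; but the self-contained Weyl-sequence version above is what I would write, and it is what the proof will actually use.
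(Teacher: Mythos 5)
Your proof is correct and takes essentially the same approach as the paper's argument (which is written out for the Jacobi analogue, Proposition~\ref{p:dense_orbit_spectrum_J}): finitely supported Weyl vectors, comparison of the potentials on the finite support window via continuity of $b$, and unitary equivalence of the operators along the orbit. The only cosmetic difference is that the paper shifts the test vector along the orbit of $\omega$ and finds an orbit point of $\omega_0$ close to $T^n\omega$, whereas you keep the test vector fixed and let $T^{n_k}\omega_0\to\omega$ directly; both are the same finite-window Weyl-criterion argument.
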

	In particular, if $E\in \rho(H_{\omega_0})$, then $d(E,\sigma(H_\omega))\ge d(E,\sigma(H_{\omega_0}))>0$ for all $\omega\in\Omega$ where $d(E,\sigma(H_\omega))$ denotes the distance between $E$ and $\sigma(H_\omega)$. Then using the strategy of \cite[Section 3.3.2]{zhang2}, one can show that the constants appear in the definition of uniform hyperbolicity of $A(T^{(\cdot)}\omega):\Z \to\SL(2,\R)$
	depend only on $\|(H_\omega-E)^{-1}\|^{-1}=d(E,\sigma(H_\omega))$. In particular, those constants can be made uniform for all $\omega\in\Omega$.
	
	\subsection{The case with Jacobi operators and strategy of proofs}
	
	For Jacobi operators, the notion of uniform hyperbolicity is not sufficient as the associated cocycle map takes values in $\M(2,\C)$, which denotes the set of all $2\times 2$ matrices. Not only they might be not in $\SL(2,\R)$, they might even be singular, i.e. have zero determinant. Motivated by the  study of the extended Harper model \cite{jitomirskayamarx, avilajitomirskayamarx},  C. Marx \cite{marx} first noted that in such scenario the right choice should be cocycles admitting dominated splitting, which is a natural generalization of uniformly hyperbolic cocycles to those allowing $\M(2,\C)$-values. Domination, on the other hand, is an intensively studied notion in dynamical systems. It was introduced by and plays a key role in the works of Ma\~ n\'e \cite{mane}  and Liao \cite{liao} on Smale' s stability conjecture. The term \textit{dominated splitting} was introduced by Ma\~n\'e in \cite{mane}. We refer the readrs to \cite{bochiviana, bonatti, pujals} and the referenes therein regarding recent works in smooth dynamical systems involving domination. For recent works related to domination for cocyles defined over base dynamics, we refer the readers to \cite{avilajitomirskayasadel, bochigourmelon, blumenthalmorris} and the references therein.

	Regarding the correspondence between domination and spectrum of Jacobi operators, Marx \cite{marx} showed the following version of Johnson's theorem. We again let $T:\Omega\to\Omega$ be homeomorphism on a compact metric space $\Omega$. We say $T$ is minimal if $\overline{\mathrm{Orb}(\omega)}=\Omega$ for all $\omega\in\Omega$. We say $T$ is uniquely ergodic if $T$ has only one ergodic probablity measure. We say $T $ is strictly ergodic if $T$ is uniquely ergodic and minimal. By Proposition~\ref{p:dense_orbit_spectrum_J}, $\rho(J_\omega)$ is independent of $\omega\in\Omega$ if $T$ is minimal. Let $B^E:\Omega\to\M(2,\C)$ be the Jacobi cocycle map that is associated with the energy $E\in\C$. 
	 
	\begin{theorem}\label{t:marx}
		Let $T:\Omega\to\Omega$ be strictly ergodic. Assume $B^E\in C^0(\Omega,\M(2,\C))$ and $\int_\Omega\big|\log|\det(B^E(\omega))|\big|d\mu<\infty$ for all $E\in\C$. Assume that the map $\omega\mapsto J_{\omega}$ is continuous with respect to the operator norm on $J_\omega$. Then $E\in\rho(J_{\omega})$ if and only if $(T,B^E)$ admits dominated splitting.
		\end{theorem}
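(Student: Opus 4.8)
The plan is to reduce Theorem~\ref{t:marx} to a sequence-level statement and then lift, mirroring the passage from Theorem~\ref{t:johnson_sequence} to Theorem~\ref{t:johnson} sketched in Section~\ref{ss:johnson}, with uniform hyperbolicity replaced throughout by dominated splitting. The two ingredients I would use are: \emph{(a)} the sequence-level Johnson theorem for bounded, possibly singular Jacobi operators — for $J_{a,b}$ with $a\in\ell^\infty(\Z)$, $b\in\ell^\infty(\Z,\R)$ and $E\in\C$ one has $E\in\sigma(J_{a,b})$ if and only if the associated cocycle $B^E\colon\Z\to\M(2,\C)$ does not admit dominated splitting; and \emph{(b)} the $\|\cdot\|_\infty$-stability of dominated splitting for $\M(2,\C)$-sequences. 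Since $T$ strictly ergodic is in particular minimal, Proposition~\ref{p:dense_orbit_spectrum_J} applies, so $\rho(J_\omega)$ does not depend on $\omega$; fix $\omega_0$ with $\overline{\mathrm{Orb}(\omega_0)}=\Omega$.

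For the implication ``$(T,B^E)$ admits dominated splitting $\Rightarrow E\in\rho(J_\omega)$'', recall that such a splitting consists of continuous invariant line bundles $\CE^{s},\CE^{u}$ over $\Omega$ together with constants $C>0$ and $\tau\in(0,1)$ controlling the growth of $\CE^{s}$ relative to $\CE^{u}$ along finite orbit segments. Restricting the bundles and the estimates to the orbit of an arbitrary $\omega$ exhibits $n\mapsto B^E(T^n\omega)$ as a dominated $\M(2,\C)$-sequence with the same $C,\tau$; by ingredient \emph{(a)}, $E\in\rho(J_\omega)$. In particular $E\in\rho(J_{\omega_0})$.

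For the converse, assume $E\in\rho(J_{\omega_0})$. By Proposition~\ref{p:dense_orbit_spectrum_J} we have $\sigma(J_\omega)\subseteq\sigma(J_{\omega_0})$ for every $\omega$, so $\delta_0\defeq d\big(E,\sigma(J_{\omega_0})\big)>0$ is a lower bound for $d\big(E,\sigma(J_\omega)\big)$, uniformly in $\omega$. By \emph{(a)}, each sequence $n\mapsto B^E(T^n\omega)$ admits a dominated splitting; the work is to make this uniform and to glue the fiberwise splittings into a bundle splitting over $\Omega$. For uniformity I would run the quantitative version of \emph{(a)} — in the spirit of \cite[Section~3.3.2]{zhang2} — which should yield domination data for $B^E(T^\bullet\omega)$ depending only on $\big\|(J_\omega-E)^{-1}\big\|^{-1}=d\big(E,\sigma(J_\omega)\big)\ge\delta_0$, hence constants $C,\tau$ independent of $\omega$. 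Given uniform $C,\tau$, define $\CE^{s}_\omega$ (resp.\ $\CE^{u}_\omega$) as the most-contracted direction at site $\omega$ of the forward product $B^E(T^{n-1}\omega)\cdots B^E(\omega)$ (resp.\ the most-expanded direction at site $\omega$ of the backward product $B^E(T^{-1}\omega)\cdots B^E(T^{-n}\omega)$) in the limit $n\to\infty$; uniform domination makes these limits exist, depend continuously on $\omega$, be $B^E$-invariant, and satisfy the domination inequality by passing the finite-time estimates to the limit. Equivalently, the fiberwise splitting along $\mathrm{Orb}(\omega_0)$ is uniformly continuous and so extends continuously to $\overline{\mathrm{Orb}(\omega_0)}=\Omega$, with invariance and the domination inequality preserved by continuity; it is precisely here that the stability input \emph{(b)} is used, to keep nearby orbit segments comparably dominated. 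Either way, $(T,B^E)$ admits dominated splitting.

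The step I expect to be the main obstacle is the quantitative/gluing argument in the converse, and it is where the ``possibly singular'' feature matters. On one hand, \emph{(a)} itself must be established with the domination constants controlled explicitly by $d(E,\sigma(J_{a,b}))$ while allowing $a_n=0$: at such sites the finite transfer matrix drops to rank one, and one must verify that domination forces its kernel to be the stable line and its image to be the unstable line at the next site, consistently with the quantitative bounds. On the other hand, upgrading the fiberwise lines to \emph{continuous} line bundles over all of $\Omega$ is what forces one to have a robust, perturbation-stable notion of dominated splitting for $\M(2,\C)$-sequences, i.e.\ \emph{(b)}. By contrast, the integrability hypothesis $\int_\Omega\big|\log|\det(B^E(\omega))|\big|\,d\mu<\infty$ is not needed for this route except possibly at the sequence level; it enters Marx's original argument because that one proceeds via the Thouless formula and subharmonicity of the Lyapunov exponent. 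Finally, of strict ergodicity only minimality — in fact only a dense orbit together with $\sigma(J_\omega)\subseteq\sigma(J_{\omega_0})$ — is used, so the same scheme should give the analogue when $(T,\Omega)$ is merely topologically transitive.
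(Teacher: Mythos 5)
Your proposal is correct and follows essentially the same route as the paper: the forward direction is exactly the restriction-to-orbits argument (Corollary~\ref{c:jacobi_johnson}), and the converse combines Proposition~\ref{p:dense_orbit_spectrum_J} with the quantitative sequence-level theorem — constants depending only on $d(E,\sigma(J_\omega))\ge\delta_0$ — to produce uniformly separated, uniformly dominated fiberwise directions at every $\omega$, which is precisely the proof of Theorem~\ref{t.jacobi_johnson} of which Theorem~\ref{t:marx} is a special case. The only place your mechanism differs is the last continuity step: the paper does not invoke the stability theorem there, but instead proves Lemma~\ref{l:equiv_conditions_DS_dyna}, which shows that uniform separation plus the domination inequality automatically forces continuity of $E^s,E^u$ (with the singular sites handled by identifying the stable direction with the kernel), so the bundles need not be extended from the dense orbit at all.
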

	 The original version of Marx's theorem was stated for quasiperiodic Jacobi cocycles whose base dynamics are minimal translations on the $d$-dimensional torus. Such base dynamics satisfy all the conditions stated in Theorem~\ref{t:marx}. But as the author indicated in \cite[Remark 1.2.ii.]{marx}, the proof works for a more general dynamics as stated in Theorem~\ref{t:marx}. The author also mentioned in \cite[Remark 1.2.iii.]{marx} that he believes the results should hold true for merely minimal base dynamics. We wish to point out that there are Johnson theorems for CMV matrices. Regarding Johnson's theorem for the standard CMV matrices, which may be considered as the unitary analog of Schr\"odinger operators, we refer the readers to \cite[Theorem 1.2]{damanikfillmanlukicyessen}. Regarding Johnson's theorem for generalized CMV matrices, which is the unitary analog of Jacobi operators, we refer the readers to \cite[Theorem 6.1]{fillmanongzhang}. It in particular plays a key role in studying the spectral properties of the unitary Almost Mathieu operator in \cite{fillmanongzhang} which arises natually in modeling a one-dimensional quantum walk whose coins are distributed quasi-periodically.
	
		However, as we explained in Section~\ref{ss:johnson} and similar to the case of Schr\"odinger operators, there should be a Johnson's theorem for Jacobi operators defined by any sequences $a\in\ell^\infty(\Z)$ and  $b\in\ell^{\infty}(\Z,\R)$. Using such a theorem, one can already deduce some partial Johnson type of results for any base dynamics. Moreover, once one has the sequence version, it should not be far away to get a full version of Johnson's theorem for Jacobi operators like the Schr\"odinger case. In other words, topological transitivity should be sufficient. In particular, Propostion~\ref{p:dense_orbit_spectrum} can be easily seen to hold true for Jacobi operators as well (see e.g. Proposition~\ref{p:dense_orbit_spectrum_J} of Section~\ref{s:dynamicalVersion}). Hence, one should be able to control all the necessary constants associated with conditions of Definition~\ref{d.uhsequence} to make them uniform. 
		
		These are the main goals of this paper: to establish thorough versions of the Johnson's theorem for Jacobi operators, both for sequence-defined and dynamically defined operators.
		
		In the case of $\SL(2,\R)$ matrices, the definition of uniform hyperbolicity for cocycles defined over base dynamics and for $\SL(2,\R)$-sequences are almost identical, see \cite{zhang2}. However, though the definition of dominated splitting for $\M(2,\C)$-cocycles over base dynamics exists (see. e.g. Definition~\ref{d:domination_dynamical} of Section~\ref{s:dynamicalVersion}), it's not immediately clear what the one for $\M(2,\C)$-sequences should be. A good such definition should at least satisfy the following conditions:
		
		\begin{enumerate}[label=(\alph*)]
			\item It generalizes the definition of uniform hyperbolicity for $\SL(2,\R)$-sequences.
			\item If a dynamically defined cocycle $(T,B)$ admits dominated splitting, then 
			$$
			B(T^{(\cdot)}\omega):\Z\to\M(2,\C)
			$$ should be a sequence that admits dominated splitting for each $\omega\in\Omega$.
			\item It should be stable under $\|\cdot\|_\infty$-perturbation.
			\item It should be the right definition for one to show a Johnson's theorem for the Jacobi operator $J_{a,b}$ with any choice of $a\in\ell^\infty(\Z)$ and  $b\in\ell^{\infty}(\Z,\R)$.
			\end{enumerate}
		
		Thus, our first task is to find such a definition. Throughout the paper, we consider cocycle maps $B$ on $\Z$ or on a compact metric space $\Omega$ to be in $\ell^\infty$. We also let $\|\cdot\|_\infty$ denotes the superemum norm in various scenarios. In particular, we may let $M>0$ to be a universal upbound for all the cocycle maps $B$, that is $\|B\|_\infty<M$. We define
	\beq\label{eq:cocycleiteration}
	B_n(j)=\begin{cases}B(j+n-1)\cdots B(j), & n\ge1,\\ I_2 , & n=0,
	\end{cases}
	\eeq 
	where $I_2$ is the identity matrix. We also define
	$$
	B_{-n}(j)= [B_{n}(j-n)]^{-1}=B(j-n)^{-1}\cdots B(j-1)^{-1}, \ n\ge 1,
	$$
	if all matrices involved are invertible. It turns out that the following definition of domination for $\M(2,\C)$-sequences serves our purposes perfectly:
		\begin{defi}\label{d.uhsequence}
			We say that $B\in\ell^{\infty}(\Z,\M(2,\C))$ admits \emph{dominated splitting} $(\CD\CS)$ if for each $j\in\Z$, there are one-dimensional spaces $E^u(j)$ and $E^s(j)$ of $\C^2$ with the following properties.
			
			\begin{enumerate}
				\item $E^u,E^s$ are $B$--invariant in the sense that for all $j\in\Z$, it holds that
				$$
				B(j)[E^u(j)] \subseteq E^u(j+1) \mbox{ and } B(j)[E^s(j)]\subseteq E^s(j+1).
				$$
				\item There exist $N \in \Z_{+}$ and $\lambda>1$ such that 
				$$
				\|B_{N}(j)\vec u(j)\|>\lambda \|B_N(j)\vec s(j)\|
				$$
				for all $j\in\Z$ and all unit vectors $\vec u(j)\in E^u(j)$ and $\vec s(j)\in E^s(j)$.
				\item $\textit{There exists } \delta >0$ such that $d\big(E^u(j), E^s(j)\big)> \delta$ for all $j \in \Z$.
				\item Let $N\in\Z_+$ be from condition (2). Then it holds $\inf_{j\in\Z}\|B_N(j)\|>0$.
			\end{enumerate} 
		\end{defi}
		Clearly, condition (2) implies that $E^s(j)\neq E^u(j)$, hence $\C^2=E^s(j)\oplus E^u(j)$, for all $j\in\Z$. In condition (3), $d(W,V)$ denotes a distance between two one-dimensional spaces $W$ and $V$ of $
		\C^2$. For its definition, see equation \eqref{eq:DistofComplexLine} at the beginning of Section~\ref{ss:uhsequence}. Here we say the space $E^u$ dominates the space $E^s$. Throughout this paper, without loss of generality, we set $\lambda =2$ in condition (2) in Definition~\ref{d.uhsequence}. From now on, $B\in\CD\CS$ means $B$ admits dominated splitting. 
		
		\begin{remark}\label{r:domination_sequence}
				Conditions (1) and (2) of Definition~\ref{d.uhsequence} are standard conditions when one defines $\mathrm{M}(2,\C)$-valued cocycles defined over a base dynamics that admit dominated splitting, see again Definition~\ref{d:domination_dynamical} of Section~\ref{s:dynamicalVersion}. We also notice that in Definition~\ref{d:domination_dynamical}, condition (3) is automatically satisfied. However, when it comes to $\mathrm{M}(2,\C)$ sequences, neither conditions (1) and (2) imply conditions (3) and (4), nor conditions (3) and (4) imply each other under conditions (1) and (2). But conditions (3) and (4) are necessary to guarantee the stability of the domination. Indeed, we have two examples where the first example satisfies conditions (1)-(3) while condition (4) fails. In the second example, we have conditions (1), (2), and (4) while condition (3) fails. Moreover, our examples are actually $\mathrm{GL}(2,\C)$-valued.
				
				\emph{Example one}. We  defined $B$ as follows:
				$$
				B(j)=\begin{pmatrix} 2^{-|j|} & 0\\\ 0 & 2^{-|j|-1}\end{pmatrix} \mbox{ for all }j\in\Z.
				$$ 
				It is clear that for this $B$ we have $E^s(j)=\mathrm{span}\{\binom{0}{1}\}$ and $E^u(j)=\mathrm{span}\{\binom{1}{0}\}$ where $E^u$ dominates $E^s$ at step $1$. By definition of distance between complex lines at the begining of Section~\ref{ss:uhsequence}, specifically, equations \eqref{eq:DistofComplexLine} and \eqref{eq:sphere_dist_vform}, we can easily see $d(E^s(j),E^u(j))=2$ for all $j\in\Z$. Evidently, condition (4) fails since it is clear that
				$$
				\inf_{j\in\Z}\|B_n(j)\|=0\mbox{ for all }n\in\Z_+.
				$$
				
				\emph{Example two}. We define $B$ as follows.
				$$
				\Lambda(j)=\begin{pmatrix} 2^{2-|j|} & 0\\\ 0 & 2^{-|j|}\end{pmatrix},\ 
				D(j)=\begin{pmatrix}  1& 1\\ 0 & 2^{-|j|}\end{pmatrix},\mbox{ and }
				$$   
				$$
				B(j):=D(j+1)\Lambda(j) D(j)^{-1} = \begin{pmatrix} 2^{2-|j|} & -3\\ 0& 2^{-|j+1|}\end{pmatrix}.
				$$
				It's clear from the construction that the first and second column vectors of $D(j)$ generates $E^u(j)$ and $E^s(j)$ of $B(j)$, respectively. It's simple calculation to see $E^u(j)$ dominates $E^s(j)$ at step 1 with the correponding $\l>2$. Moreover, it is clear that $\inf_{j\in\Z}\|B(j)\|>3$ which is nothing other condition (4) of Definition ~\ref{d.uhsequence}. However, by \eqref{eq:DistofComplexLine} and \eqref{eq:sphere_dist_vform}, it is clear that
				$$
				d(E^s(j),E^u(j))<2^{-|j|}\to 0 \mbox{ as } j\to\pm\infty.
				$$
				 For the examples above, the domination can easily be perturbed away by an arbitrarily small $\|\cdot\|_\infty$-perturbation.
				 
				 Finally, we notice that under conditions (1)-(3), condition (4) is equivalent to 
				 \[
				 \inf_{j\in\Z}\|B_n(j)\|>0\mbox{ for all } n\in\Z_+.\label{eq:equiv_c4} \tag*{(4)'}
				 \]
				 Indeed, \ref{eq:equiv_c4} clearly implies condition (4). On the other hand, conditions (1)-(3) implies that if we define $D:\Z\to\mathrm{GL}(2,\C)$ such that the first and second column vectors are unit vectors in $E^u(j)$ and $E^s(j)$, respectively, then $\inf_{j\in\Z}|\det(D(j))|>0$, $\|D\|_\infty\le 2$, and 
				 $$
				 D(j+1)^{-1}B(j)D(j):= \begin{pmatrix} \l^+_j & 0\\ 0& \l^-_j\end{pmatrix}
				 $$
                for some $\lambda^+_j$ and $\lambda^-_j$ in $\C$. It clearly implies
                \beq\label{eq:nstep_conj}
                D(j+N)^{-1}B_n(j)D(j):= \begin{pmatrix} \prod^{j+N-1}_{k=j}\l^+_k& 0\\ 0& \prod^{j+N-1}_{k=j}\l^-_k\end{pmatrix}.
                \eeq
                By condition (2), we must have for all $j\in\Z$:
                $$
                \bigg|\prod^{j+N-1}_{k=j}\l^+_k\bigg|>2\bigg|\prod^{j+N-1}_{k=j}\l^-_k\bigg|.
                $$
                By the properties of $D$ we mentioned above and by condition (4), we must have 
                $$
                \inf_{j\in\Z}\|D(j+N)^{-1}B_N(j)D(j)\|>0.
                $$
                Combine the two estimates above, we obtain
                $$
                \inf_{j\in\Z}\bigg|\prod^{j+N-1}_{k=j}\l^+_k\bigg|>0
                $$
                which in turn implies $\inf_{j\in\Z}|\l^+_j|>0$ since $\sup_{j\in\Z}|\l^+_j|<C$ for some $C>0$. Hence 
                $$
                \inf_{j\in\Z}\bigg|\prod^{j+n-1}_{k=j}\l^+_k\bigg|>0\mbox{ for all }n\ge 1.
                $$
                By \eqref{eq:nstep_conj} and properties of $D$, we then obtain \ref{eq:equiv_c4}. It turns out that often times in the remaining part of the paper, to obtain condition (4) of Definition~\ref{d.uhsequence}, we instead prove the stronger condition \ref{eq:equiv_c4} above.
				\end{remark}

		Once we have the right definition, then the main theorem of this paper can be formulated as follows. For simplicity, we also use $(T,B)\in\CD\CS$ for $B:\Omega\to\M(2,\C)$ that admits dominated splitting. For its definition, see again Definition~\ref{d:domination_dynamical} of Section~\ref{s:dynamicalVersion}. Let $B^E:\Z\to\M(2,\C)$ be the Jacobi cocycle map for the Jacobi operator $J_{a,b}$. Then
		
     \begin{theorem}\label{t.uniformhers}
			$\rho(J_{a,b})=\{E\in\C: B^{E}\in\CD\CS\}.$
		\end{theorem}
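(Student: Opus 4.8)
The plan is to prove the two inclusions separately, reducing everything to the transfer matrix formalism for the Jacobi operator $J_{a,b}$. Recall that the formal eigenvalue equation $J_{a,b}\psi = E\psi$ can be rewritten, whenever $a_n\neq 0$, as $\binom{\psi_{n+1}}{\psi_n} = B^E(n)\binom{\psi_n}{\psi_{n-1}}$ with $B^E(n) = \frac{1}{a_n}\begin{pmatrix} E-b_n & -\overline{a_{n-1}}\\ a_n & 0\end{pmatrix}$, so that $\det B^E(n) = \overline{a_{n-1}}/a_n$; when $a_n=0$ one uses the corresponding degenerate (rank-one) convention for $B^E(n)$, which is exactly why we must allow singular $\M(2,\C)$-values and cannot stay inside $\GL(2,\C)$. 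The first step is to set up this dictionary carefully, including the singular case, and to record the standard fact that $E\in\rho(J_{a,b})$ if and only if $J_{a,b}-E$ is bounded invertible, which by self-adjointness is equivalent to $\|(J_{a,b}-E)^{-1}\|<\infty$, i.e. to a lower bound $\|(J_{a,b}-E)\psi\|\geq c\|\psi\|$ for all $\psi\in\ell^2(\Z)$.

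For the inclusion $\rho(J_{a,b})\subseteq\{E: B^E\in\CD\CS\}$: fix $E\in\rho(J_{a,b})$ and put $c = d(E,\sigma(J_{a,b})) = \|(J_{a,b}-E)^{-1}\|^{-1}>0$. The idea is that the resolvent bound gives, for each $j$, a splitting $\C^2 = E^u(j)\oplus E^s(j)$ where $E^s(j)$ consists of initial data $\binom{\psi_j}{\psi_{j-1}}$ whose forward orbit under the transfer matrices is $\ell^2$ at $+\infty$ (decaying), and $E^u(j)$ the data that is $\ell^2$ at $-\infty$; one shows these are one-dimensional and transverse using that $E$ is not an eigenvalue and that no nontrivial solution is $\ell^2$ on both ends. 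Conditions (1)-(3) of Definition~\ref{d.uhsequence} then follow from quantitative versions of this: the Combes--Thomas type estimate or, more elementarily, the argument in \cite[Section 3.3.2]{zhang2} adapted to the Jacobi setting, shows uniform exponential contraction/expansion rates, a uniform angle $\delta$, and the step-$N$ domination, all with constants depending only on $c$, $M$, and $\inf|a_n|$-type quantities — but here is the subtlety: $a_n$ may vanish, so in the singular case $B^E(j)$ has a kernel, and one must check directly that the kernel of $B^E(j)$ lies in $E^s(j)$ (so the invariance $B^E(j)[E^s(j)]\subseteq E^s(j+1)$ still makes sense) and that the image structure is compatible with $E^u$; this is where the degenerate transfer matrix must be handled with care. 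Finally, condition (4), or equivalently \ref{eq:equiv_c4}, requires a uniform lower bound on $\|B_N^E(j)\|$; this follows because on the dominating direction $E^u$ the product cannot collapse — a collapse would force a genuine $\ell^2$ solution from the left to also be controlled, contradicting the resolvent bound — but near indices where many consecutive $a_n$ are small one has to argue this quantitatively, and I expect this to be the main technical obstacle of the whole proof.

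For the reverse inclusion $\{E: B^E\in\CD\CS\}\subseteq\rho(J_{a,b})$: assume $B^E\in\CD\CS$ with data $E^u,E^s,N,\delta$ and the uniform bounds (1)-(4). The plan is to construct the resolvent explicitly, or at least to prove the a priori bound $\|(J_{a,b}-E)\psi\|\geq c\|\psi\|$. Using the conjugation by $D(j)$ from Remark~\ref{r:domination_sequence} that block-diagonalizes $B^E(j)$ into $\mathrm{diag}(\lambda_j^+,\lambda_j^-)$ with $\prod_k|\lambda_k^+|$ growing at a definite geometric rate over blocks of length $N$ relative to $\prod_k|\lambda_k^-|$, together with condition (4)/\ref{eq:equiv_c4} giving $\inf_j|\lambda_j^+|>0$ and hence genuine exponential growth of the forward cocycle on $E^u$ and exponential decay on $E^s$ (after possibly also using $\sup\|B^E\|_\infty<M$ to bound $|\lambda_j^-|$), one builds the Green's function: for the inhomogeneous equation $(J_{a,b}-E)\psi = f$ one solves by summing $f$ against the exponentially decaying solutions from the two ends, exactly as in the hyperbolic Schrödinger case, and the uniform angle $\delta$ guarantees the Wronskian-type denominator is bounded below so the kernel has summable off-diagonal decay, yielding a bounded operator on $\ell^2$. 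One must again pay attention to the indices where $a_n=0$: there the operator $J_{a,b}$ decouples into a direct sum, and one checks the constructed inverse respects this and is still bounded — in fact decoupling only helps, since it can only split the spectrum into pieces each of which is handled the same way. Assembling these pieces gives $E\in\rho(J_{a,b})$, completing the proof; throughout, the most delicate point remains reconciling the singular matrices with the one-dimensional invariant bundles, which is precisely the reason conditions (3) and (4) were built into Definition~\ref{d.uhsequence}.
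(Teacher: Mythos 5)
Your overall architecture (two inclusions, transfer-matrix dictionary, care at indices with $a_n=0$) matches the paper's, but both halves contain gaps, and the one in the direction $\{E:B^E\in\CD\CS\}\subseteq\rho(J_{a,b})$ is fatal as written. You build the Green's function from ``exponentially decaying solutions from the two ends,'' claiming that domination plus $\inf_j|\lambda_j^+|>0$ gives exponential decay of the forward cocycle on $E^s$. That is false in general: domination only controls the \emph{ratio} $\|B^E_N(j)\vec u\|/\|B^E_N(j)\vec s\|$, and since $\det B^E_n(j)=\prod a_i\overline{a_{i-1}}$ has no lower bound, the identity $\|B^E_n(j)\vec u\|\cdot\|B^E_n(j)\vec s\|\,d(E^u(j+n),E^s(j+n))=|\det B^E_n(j)|\,d(E^u(j),E^s(j))$ forces growth on $E^u$ but permits $\|A^E_n(j)\vec s(j)\|$ to grow as well (e.g.\ when the $a_{j+n}$ become small). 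So the kernel you propose need not have summable off-diagonal decay, and your Schur-test bound collapses. You also assert that the finite blocks $J_{(j_1,j_2]}$ arising from zeros of $a$ ``only help,'' but when there are infinitely many such blocks you need a \emph{uniform} lower bound on $|\det(E-J_{(j_1,j_2]})|$ for the direct sum of inverses to be bounded, and you give no argument for it. The paper avoids resolvent construction entirely: it shows (Lemmas~\ref{l.expgrowth} and~\ref{l.trivial}) that every nontrivial solution grows exponentially along some subsequence tending to $+\infty$ or $-\infty$ (growth on $E^u$ forward and on $E^s$ \emph{backward} — which is all domination actually yields), then invokes stability of $\CD\CS$ under perturbation (Theorem~\ref{l.opennessDS}) so that the same holds for all $E'$ near $E$, and concludes via Sch'nol--Berezanskii that $E\notin\overline{\CE_g(J_{a,b})}=\sigma(J_{a,b})$. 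The openness theorem is an essential ingredient you omit.

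In the direction $\rho(J_{a,b})\subseteq\{E:B^E\in\CD\CS\}$ your outline is closer to the paper's, but two points are underdeveloped. First, your definition of $E^s(j)$ as initial data with $\ell^2$ forward orbit does not determine a one-dimensional space once some $a_n=0$ for $n>j$ (the orbit degenerates and every direction is eventually annihilated); the paper instead reads the invariant directions off explicit columns of the Green's function, $\vec s(j)=\binom{g_{p}(j)}{g_{p}(j-1)}$ with $p\in\{j-1,j-2\}$ and similarly for $\vec u$, and verifies invariance, domination, and uniform transversality case by case. Second, your argument for condition (4)/\ref{eq:equiv_c4} (``a collapse would force a genuine $\ell^2$ solution'') is only a heuristic: smallness of $\|B^E_N(j)\|$ means smallness of four determinants $p_k(\cdot,E)$ weighted by possibly tiny $a$'s, and extracting a Weyl sequence from this requires the induction of Lemma~\ref{l.lowboundbn}, whose engine is the Markov inequality for polynomials (Lemma~\ref{l.Weyls}) converting $|p_N(j,E)|$ small into proximity of $E$ to $\sigma(J_N(j))$. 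That step is a genuinely new difficulty of the sequence setting and cannot be waved through.
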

	Clearly, Theorem~\ref{t.uniformhers} generalizes Theorem~\ref{t:johnson_sequence}. Similar to the Schr\"odinger case, we have an immediate corollary of Theorem~\ref{t.uniformhers} since we have the condition (b) stated before Definition~\ref{d.uhsequence}:
	
	\begin{corollary}\label{c:jacobi_johnson}
			Consider the family of dynamically defined Jacobi operators $J_\omega$, $\omega\in\Omega$, defined by \eqref{eq:dynamical_jacobi}. If $(T,B^E)\in\CD\CS$, then $E\in \bigcap_{\omega\in\Omega}\rho(\omega).$
		\end{corollary}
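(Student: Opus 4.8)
The plan is to deduce this from Theorem~\ref{t.uniformhers} together with property (b) listed before Definition~\ref{d.uhsequence}. Fix $\omega\in\Omega$ and put $a^\omega=(a(T^n\omega))_{n\in\Z}$ and $b^\omega=(b(T^n\omega))_{n\in\Z}$. Since $a\in C(\Omega,\C)$, $b\in C(\Omega,\R)$, and $\Omega$ is compact, these are bounded sequences, so $a^\omega\in\ell^\infty(\Z)$, $b^\omega\in\ell^\infty(\Z,\R)$, and comparing \eqref{eq:dynamical_jacobi} with \eqref{eq:operators2} shows $J_\omega=J_{a^\omega,b^\omega}$ with associated Jacobi cocycle sequence $B^E(T^{(\cdot)}\omega):\Z\to\M(2,\C)$, where $B^E(T^{(\cdot)}\omega)(j):=B^E(T^j\omega)$. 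By Theorem~\ref{t.uniformhers} applied to $J_{a^\omega,b^\omega}$, it suffices to show that $B^E(T^{(\cdot)}\omega)\in\CD\CS$ in the sense of Definition~\ref{d.uhsequence}; since $\omega$ is arbitrary, this gives $E\in\bigcap_{\omega\in\Omega}\rho(J_\omega)$. So the entire content is property (b): a dynamically dominated cocycle restricts, along every orbit, to a dominated $\M(2,\C)$-sequence.

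To establish (b), let $E^u,E^s$ be the (continuous, one-dimensional) invariant sub-bundles and $N\in\Z_+$, $\lambda>1$ the constants furnished by Definition~\ref{d:domination_dynamical} for $(T,B^E)$. Fix $\omega$ and set $E^u(j):=E^u(T^j\omega)$, $E^s(j):=E^s(T^j\omega)$, and $B(j):=B^E(T^j\omega)$. Condition (1) of Definition~\ref{d.uhsequence} is just cocycle invariance restricted to the orbit: $B(j)[E^u(j)]=B^E(T^j\omega)[E^u(T^j\omega)]\subseteq E^u(T^{j+1}\omega)=E^u(j+1)$, and likewise for $E^s$. Condition (2) holds with the same $N$ and $\lambda$, because the domination inequality of Definition~\ref{d:domination_dynamical} holds uniformly over $\Omega$, hence in particular at every point $T^j\omega$. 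For condition (3), note that by condition (2) one has $E^u(\omega')\ne E^s(\omega')$ for all $\omega'\in\Omega$, so $\omega'\mapsto d(E^u(\omega'),E^s(\omega'))$ is continuous and strictly positive on the compact space $\Omega$; thus $\delta:=\inf_{\omega'\in\Omega}d(E^u(\omega'),E^s(\omega'))>0$, and this $\delta$ works along the orbit. For condition (4), the map $\omega'\mapsto B^E_N(\omega')=B^E(T^{N-1}\omega')\cdots B^E(\omega')$ is continuous (a product of continuous matrix-valued maps, $T$ a homeomorphism), and for each $\omega'$, picking a unit $\vec u\in E^u(\omega')$ and a unit $\vec s\in E^s(\omega')$, condition (2) gives $\|B^E_N(\omega')\vec u\|>\lambda\|B^E_N(\omega')\vec s\|\ge 0$, so $\|B^E_N(\omega')\|\ge\|B^E_N(\omega')\vec u\|>0$; by continuity and compactness, $\inf_{\omega'\in\Omega}\|B^E_N(\omega')\|>0$, and therefore $\inf_{j\in\Z}\|B_N(j)\|\ge\inf_{\omega'\in\Omega}\|B^E_N(\omega')\|>0$. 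All four conditions hold, so $B^E(T^{(\cdot)}\omega)\in\CD\CS$, completing (b) and hence the corollary.

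I expect the only genuinely delicate point to be the continuity of the sub-bundles $E^u,E^s$ of a dominated splitting over the compact base $\Omega$ — this is what makes the compactness arguments in conditions (3) and (4) go through — but this is a standard fact and should already be available from Definition~\ref{d:domination_dynamical} and the discussion of domination in Section~\ref{s:dynamicalVersion} (indeed Remark~\ref{r:domination_sequence} records that the analogue of condition (3) is automatic in the dynamical setting). Everything else is a routine restriction of the dynamical hypotheses to a single orbit together with the uniformity over $\Omega$ coming from compactness, so no new estimates beyond those already proved are needed.
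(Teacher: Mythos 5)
Your proposal is correct and follows essentially the same route as the paper: the paper also deduces the corollary from Theorem~\ref{t.uniformhers} together with the restriction property (b), which it verifies in Remark~\ref{r:dyna_DS_implies} by exactly your argument (invariance and the domination inequality restrict verbatim to each orbit, while conditions (3) and (4) of Definition~\ref{d.uhsequence} follow from the continuity of $E^{s}$, $E^{u}$, and $B^{E}_{N}$ together with compactness of $\Omega$). Your write-up just makes explicit the verification that the paper leaves as a terse remark.
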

However, similar to the discussion in Section~\ref{ss:johnson}, our proof of Theorem~\ref{t.uniformhers} actually implies Theorem~\ref{t.jacobi_johnson} below. Thus, the main task of this paper is to prove Theorem~\ref{t.uniformhers}. It will be divided into two directions. The first direction is:
	\beq\label{eq:firstdirection}
		\{E\in\C: B^{E}\in\CD\CS\}\subset \rho(J_{a,b})
	\eeq
	which relies on the stability of domination for $\M(2,\C)$-sequences: 
	
		\begin{theorem}\label{l.opennessDS}
			Let $B:\Z\rightarrow \mathrm{M}(2,\C)$ be such that $B \in \CD\CS$. There exists $\e >0$ such that if $\widetilde{B}:\Z\rightarrow \mathrm{M}(2,\C)$ satisfies $\| \widetilde{B}-B\|_{\infty}< \e$, then $\widetilde{B} \in \CD\CS$.
		\end{theorem}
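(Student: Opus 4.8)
The plan is to exploit the characterization of dominated splitting via a conjugation that diagonalizes the cocycle, as already set up in Remark~\ref{r:domination_sequence}, and then verify all four conditions of Definition~\ref{d.uhsequence} for a nearby sequence $\widetilde B$ by a cone-field / graph-transform argument. First I would fix $B\in\CD\CS$ with constants $N\in\Z_+$, $\lambda=2$, $\delta>0$, and set $c_0:=\inf_{j}\|B_N(j)\|>0$, $M:=\max\{\sup_j\|B(j)\|,1\}$. Using the splitting $\C^2=E^u(j)\oplus E^s(j)$ with $d(E^u(j),E^s(j))>\delta$, build the matrices $D(j)\in\GL(2,\C)$ whose columns are unit vectors spanning $E^u(j)$ and $E^s(j)$; as observed in Remark~\ref{r:domination_sequence}, $\|D\|_\infty\le 2$, $\inf_j|\det D(j)|>0$ (this is where condition (3) enters, controlling $\|D(j)^{-1}\|$ uniformly), and $D(j+1)^{-1}B(j)D(j)=\mathrm{diag}(\lambda^+_j,\lambda^-_j)$. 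From conditions (2) and (4)$'$ one extracts, as in the Remark, the two-sided control $0<c_1\le|\lambda^+_j|\le M'$ and, over $N$-blocks, $|\prod_{k=j}^{j+N-1}\lambda^+_k|>2\,|\prod_{k=j}^{j+N-1}\lambda^-_k|$, plus $\inf_j|\prod_{k=j}^{j+N-1}\lambda^+_k|>0$. In these coordinates the statement becomes: a bounded sequence of diagonal-dominated matrices stays dominated under small perturbation, which is the classical hyperbolic-splitting persistence fact.

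The key steps in order: (i) pass to the conjugated cocycle $\Lambda(j):=D(j+1)^{-1}B(j)D(j)$ and the perturbed conjugated cocycle $\widetilde\Lambda(j):=D(j+1)^{-1}\widetilde B(j)D(j)$; since $\|D\|_\infty$ and $\|D^{-1}\|_\infty$ are uniformly bounded, $\|\widetilde B-B\|_\infty<\e$ gives $\|\widetilde\Lambda-\Lambda\|_\infty<C\e$ for an explicit constant $C=C(\delta,M)$. (ii) Introduce the standard cone $\CC=\{(x,y):|y|\le|x|\}$ around the first coordinate axis; the block estimate over $N$ steps together with the gap between the products of $\lambda^+$'s and $\lambda^-$'s shows $\Lambda_N(j)$ maps $\CC$ strictly inside a smaller cone $\CC'\subset\mathrm{int}\,\CC$ and expands vectors in $\CC$; by continuity, for $\e$ small enough $\widetilde\Lambda_N(j)$ does the same — here one uses that only finitely many ($N$) factors are perturbed in each block, and each factor is uniformly bounded, so the product perturbs by $O(\e)$. (iii) By the contraction-mapping / graph-transform argument on the space of cone fields, the nested images $\bigcap_{m\ge 0}\widetilde\Lambda_{mN}(j-mN)[\CC]$ define a one-dimensional $\widetilde\Lambda$-invariant bundle $\widetilde E^u$; running the same argument backward in time (using invertibility near $B$, which holds for $\e$ small since the dominated directions force $\Lambda(j)$, hence $\widetilde\Lambda(j)$, to be invertible once $\e<c_1/2$) with the complementary cone produces $\widetilde E^s$, and transversality $d(\widetilde E^u(j),\widetilde E^s(j))>\delta/2$ comes for free from the strict cone nesting. (iv) Conjugate back: $E^u_{\widetilde B}(j):=D(j)\widetilde E^u(j)$, $E^s_{\widetilde B}(j):=D(j)\widetilde E^s(j)$; boundedness of $D,D^{-1}$ transfers the angle bound (condition (3)) and the $N$-step domination (condition (2)), while $\inf_j\|\widetilde B_N(j)\|\ge \inf_j\|\widetilde\Lambda_N(j)\|/\|D\|_\infty^2 \ge (c_0'/\|D\|_\infty^2)>0$ from the uniform expansion on the $\widetilde E^u$-cone (condition (4), in fact (4)$'$).

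The main obstacle I anticipate is bookkeeping the uniformity of constants through the conjugation — specifically, ensuring that the single threshold $\e$ works simultaneously for all $j\in\Z$. This is exactly why conditions (3) and (4) were built into Definition~\ref{d.uhsequence}: condition (3) keeps $\|D(j)^{-1}\|$ bounded uniformly (so the conjugated perturbation is uniformly small), and condition (4), via the reformulation (4)$'$ derived in Remark~\ref{r:domination_sequence}, provides the uniform lower bound $\inf_j|\lambda^+_j|>0$ without which the backward graph transform (and the persistence of condition (4) itself) would fail — precisely the failure mechanisms illustrated by Examples one and two. A secondary technical point is that $\widetilde B(j)$ need not be invertible, so $B_{-n}$ is not literally available; I would handle $\widetilde E^s$ not by inverse iteration of $\widetilde\Lambda$ but as the unique $\widetilde\Lambda$-invariant complementary line obtained from the forward-invariant \emph{weakly contracted} cone about the second axis (well defined because the $N$-step matrices, though possibly not invertible, are uniformly bounded below in norm and dominate), then verify its invariance and the required estimates directly. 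Once the cone estimates are in place the rest is routine, so the write-up will concentrate on steps (ii) and (iii).
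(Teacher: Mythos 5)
Your overall route is the same as the paper's: conjugate by the matrices $D(j)$ built from the unperturbed splitting, observe that $\Lambda(j)=D(j+1)^{-1}B(j)D(j)$ is diagonal and maps a disc (cone about the first axis) strictly into itself, check that $\widetilde\Lambda$ inherits this for small $\e$, and recover $\widetilde E^u$ as the nested intersection of forward images of the cone. Up to and including the construction of $\widetilde E^u$ and the verification of conditions (2)--(4) your outline matches the paper's argument (the paper packages the cone step as Theorem~\ref{l:invConeFieldtoDS}, proved via the Schwarz--Pick lemma rather than a linear cone estimate, but that is a cosmetic difference).

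There is, however, a genuine gap in your construction of $\widetilde E^s$. First, the claim that the dominated directions force $\Lambda(j)$, hence $\widetilde\Lambda(j)$, to be invertible once $\e<c_1/2$ is false: condition (4)$'$ only gives $\inf_j|\lambda^+_j|>0$; the entry $\lambda^-_j$ may vanish, so $\Lambda(j)$ and $\widetilde\Lambda(j)$ can be singular no matter how small $\e$ is, and backward iteration is simply unavailable. Your fallback --- obtaining $\widetilde E^s$ from a ``forward-invariant weakly contracted cone about the second axis'' --- does not work either: for a dominated diagonal matrix $\mathrm{diag}(\lambda^+,\lambda^-)$ with $|\lambda^+|>2|\lambda^-|$ the cone $\{|x|\le|y|\}$ about the second axis is \emph{expanded} under forward iteration (it is backward-invariant, not forward-invariant), so no invariant line can be extracted from it by forward iteration. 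The missing idea is the one the paper uses in Case II of the proof of Theorem~\ref{l:invConeFieldtoDS}: fix $j_0$ and look forward; if all $\det \widetilde\Lambda(j)\neq 0$ for $j\ge j_0$, define $\widetilde E^s(j_0)$ by inverse iteration of the complementary cone, while if $j'=\min\{j\ge j_0:\det\widetilde\Lambda(j)=0\}$ exists, set $\widetilde E^s(j_0)=\ker\bigl(\widetilde\Lambda_{j'-j_0+1}(j_0)\bigr)$, and then check invariance and the separation $\widetilde E^s(j_0)\in\D_\a^\complement$ directly (the kernel cannot meet the forward-invariant disc, which is contained in the domain of every iterate). Without this case split your argument does not produce the stable direction for singular perturbed cocycles, which is precisely the situation the theorem is designed to cover.
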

It turns out that the key step to show Theorem~\ref{l.opennessDS} is to show the following result. Let $\D_r=\{z: |z|<r\}\subset\C\PP^1=\C\cup\{\infty\}$. For $A\in\M(2,\C)$, we let $A\cdot$ denote certain projectivized action on $\C\PP^1$ (see \eqref{eq:inducedProj2} at the beginning of Section~\ref{ss:uhsequence} for the definition). Then

\begin{theorem}\label{l:invConeFieldtoDS}
	Let $A:\Z\to\M(2,\C)$ be such that $\inf_{j\in\Z}\|A(j)\|>0$. If there exist $\a>\a'>0$ such that $A(j)\cdot(\D_\a)\subset \D_{\a'}$ for all $j\in\Z$, then $A\in\CD\CS$.
\end{theorem}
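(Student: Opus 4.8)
The plan is to extract the invariant subbundles $E^u$ and $E^s$ directly from the nested family of disks, then verify the four conditions of Definition~\ref{d.uhsequence} one at a time. First I would set up the forward and backward cone fields. Because $A(j)\cdot(\D_\a)\subset\D_{\a'}$ with $\a'<\a$ for every $j$, the forward images $A_n(j-n)\cdot(\D_\a)$, $n\ge 1$, form a strictly nested sequence of disks (here one must check that each $A(j)$ is invertible on a neighborhood of $\overline{\D_{\a'}}$ in the relevant sense, or argue directly that the image of a disk is a disk or a point; the condition $\inf_j\|A(j)\|>0$ together with the strict inclusion should force nonsingularity, since a rank-one matrix sends $\C\PP^1$ minus a point to a single point, which cannot sit strictly inside $\D_{\a'}$ while also being the image of all of $\D_\a$ — I expect a short lemma here). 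The intersection $\bigcap_{n\ge 0} A_n(j-n)\cdot(\D_\a)$ is then nonempty and compact; I would like it to be a single point, which I will call $E^u(j)$. Dually, running the cocycle backward, the complements $\C\PP^1\setminus(A_{-n}(j)\cdot \D_\a)$... more precisely one looks at $A_{-n}(j+n)\cdot(\C\PP^1\setminus\overline{\D_{\a}})$; since $A(j)^{-1}$ maps the exterior of $\D_{\a'}$ into the exterior of $\D_\a$, these also nest and shrink, and their intersection gives $E^s(j)$, automatically disjoint from $\overline{\D_\a}\ni E^u(j)$, so condition (3) comes essentially for free with $\delta$ governed by $\a-\a'$.

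The invariance condition (1) is immediate from the construction: $A(j)$ carries the defining nested family at $j$ into the one at $j+1$, so $A(j)[E^u(j)]\subseteq E^u(j+1)$ and likewise for $E^s$. For the domination estimate (2), the natural route is to quantify the contraction on the disk. The hypothesis $A(j)\cdot(\D_\a)\subset\D_{\a'}$ with a definite gap gives, via the Schwarz–Pick lemma applied to the action on $\D_\a$ (or a direct computation with the explicit Möbius formula for $A\cdot$), a uniform contraction factor $0<\kappa<1$ for the hyperbolic metric on $\D_\a$, independent of $j$. Iterating, $A_n(j)$ contracts $\D_\a$ by $\kappa^n$, and since $E^u(\cdot)\in\D_\a$ while $E^s(\cdot)$ lies outside $\overline{\D_\a}$, the derivative of the projective action at the attracting point $E^u$ is exponentially smaller than at the repelling point $E^s$; translating the derivative of the Möbius action back into the ratio $\|A_N(j)\vec u\|/\|A_N(j)\vec s\|$ (standard: this ratio equals $|\det A_N(j)|\cdot(\text{derivative of the projective action at }E^s)/\ldots$ up to the sines of the angles, which are bounded below by (3)) yields the desired $\lambda>1$ once $N$ is taken large enough that $\kappa^N$ beats the bounded angle factors. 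Here I would fix $\lambda=2$ as the paper allows.

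Condition (4), in the equivalent form \ref{eq:equiv_c4}, is where I expect the main obstacle, and it is precisely why the hypothesis $\inf_{j\in\Z}\|A(j)\|>0$ is imposed rather than being derivable from the cone condition alone. The issue is that the cone condition controls only \emph{directions}, not \emph{norms}: a priori the norms $\|A_n(j)\|$ could decay. The plan is to diagonalize as in Remark~\ref{r:domination_sequence}: let $D(j)\in\GL(2,\C)$ have unit columns spanning $E^u(j)$, $E^s(j)$; condition (3) gives $\inf_j|\det D(j)|>0$ and $\|D\|_\infty\le 2$, and invariance gives $D(j+1)^{-1}A(j)D(j)=\mathrm{diag}(\l^+_j,\l^-_j)$. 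One then needs $\inf_j|\l^+_j|>0$. From $\inf_j\|A(j)\|>0$ and the bounds on $D$ one gets $\inf_j\max(|\l^+_j|,|\l^-_j|)>0$; combined with the domination inequality $|\l^+_j\cdots|>2|\l^-_j\cdots|$ over windows of length $N$ (so that $|\l^+|$ is the larger eigenvalue-direction at least in the multiplicative-average sense), a telescoping/pigeonhole argument over blocks of length $N$ forces $\inf_j|\l^+_j|>0$, after which $\inf_j\|A_n(j)\|>0$ for all $n$ follows by reversing the conjugation. The delicate point is making the block argument genuinely uniform in $j\in\Z$ rather than merely along a subsequence; I would handle this by working with products over all windows $[j,j+N)$ simultaneously and using that $\sup_j|\l^+_j|\le 2M$ to convert a lower bound on $N$-fold products into a lower bound on single factors, exactly as carried out in Remark~\ref{r:domination_sequence}.
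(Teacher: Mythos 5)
There is a genuine gap at the very first step: your ``short lemma'' asserting that the hypotheses force each $A(j)$ to be invertible is false, and the rest of your construction of $E^s$ depends on it. Take $A(j)=\left(\begin{smallmatrix}1&0\\0&0\end{smallmatrix}\right)$ for all $j$: then $\inf_j\|A(j)\|=1$ and $A(j)\cdot z=0$ for every $z$, so $A(j)\cdot(\D_\a)=\{0\}\subset\D_{\a'}$ for any $\a>\a'>0$. A rank-one matrix whose kernel direction lies outside $\D_\a$ collapses the whole disk to a single point, and nothing prevents that point from lying in $\D_{\a'}$ --- there is no contradiction with the point ``being the image of all of $\D_\a$.'' Singular matrices are not an edge case here; they are the raison d'\^etre of the theorem, since Jacobi cocycles with $a_j=0$ produce exactly such matrices. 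Consequently your definition of $E^s(j)$ as $\bigcap_n A_{-n}(j+n)\cdot(\C\PP^1\setminus\overline{\D_\a})$ is simply unavailable whenever some $A(j')$ with $j'\ge j$ is singular. The paper's proof splits into two cases: if $\det A(j')\ne 0$ for all $j'\ge j$ it runs your backward-iteration argument (via the reciprocal chart $g(z)=1/z$), but if $\det A(j')=0$ for some $j'\ge j$ it takes $j'$ minimal and defines $E^s(j)=\ker\big(A_{j'-j+1}(j)\big)$, then checks invariance and the location $E^s(j)\in\D_\a^{\complement}$ by hand (the kernel direction must lie outside $\D_\a$ because $\D_\a$ is in the domain of every $A_n(j)$). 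Your forward construction of $E^u$, the Schwarz--Pick contraction, and the translation of the contraction into the domination estimate (2) all match the paper and are fine; but without the kernel case the theorem as stated is not proved.

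A secondary, smaller concern: your argument for condition \ref{eq:equiv_c4} is not the one in Remark~\ref{r:domination_sequence} and does not obviously close. The remark derives $(4)'$ from conditions (1)--(3) \emph{together with} condition (4), i.e.\ from $\inf_j\|A_N(j)\|>0$ at the $N$-step level; you only have $\inf_j\max(|\l^+_j|,|\l^-_j|)>0$ at the one-step level, and the proposed ``telescoping/pigeonhole over blocks'' from that plus the window inequality $|\prod\l^+_k|>2|\prod\l^-_k|$ does not visibly rule out a single $j_0$ with $|\l^+_{j_0}|$ tiny and $|\l^-_{j_0}|$ of order one. The paper instead uses the cone field itself: if $|\l^+_{j_0}|<\e$ while $|\l^-_{j_0}|>c$, the projective action $z\mapsto(\l^-_{j_0}/\l^+_{j_0})z$ of $\Lambda(j_0)$ inflates $\D_{\a_1}\subset\CF(j_0)$ to contain $\D_{c\a_1/\e}$, contradicting $\Lambda(j_0)\cdot\CF(j_0)\subset\CF(j_0+1)\subset\D_{\a_2}$ for $\e$ small. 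You would need either this argument or a correct substitute.
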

\begin{remark}\label{r:dependence of constants.}
Theorem~\ref{l:invConeFieldtoDS} basically says that the existence of an invariant cone field implies domination for a $\M(2,\C)$-sequence. Theorem~\ref{l:invConeFieldtoDS} also justifies the rationality of Definition~\ref{d.uhsequence} as well. We will prove Theorems \ref{l.opennessDS} and \ref{l:invConeFieldtoDS} in Section~\ref{ss:uhsequence}, which is of pure dynamical systems. In the proof, we will show the following facts. For $B\in\CD\CS$, we let $N(B)$, $\delta(B)$, and $m_{N(B)}(B)=\inf_j\|B_{N(B)}(j)\|$ to be constants that appear in conditions (2), (3), and (4) of Definition~\ref{d.uhsequence}, respectively.  Then in the proof of Theorem~\ref{l:invConeFieldtoDS}, we can show $N(A)$ and $\delta(A)$ depend only on $M$, $\a$, and $\a'$. Using this fact, we can further show that the $\e$ in Theorem~\ref{l.opennessDS} depends only on $N(B), \delta(B)$, $m_N(B)$ and $M$ which in turn implies that $N(\tilde B)$ and $\delta(\tilde B)$ depend only on $N(B)$, $\delta(B)$, $m_N(B)$, and $M$. Those facts will be used to deduce Corollary~\ref{c:stability_DS_dyna} in Section~\ref{s:dynamicalVersion}.
\end{remark}

Once Theorem~\ref{l.opennessDS} is proven, we have the main tool to prove the first direction \eqref{eq:firstdirection} of Theorem~\ref{t.uniformhers}. It will be done in Section~\ref{ss:uh_to_resolvent}. Here we mainly use the strategy of \cite{marx}. However, the fact that we have to deal with the singular operators directly poses certain obstacles. In fact, one of the main reasons that Theorem~\ref{t:marx} puts a lot of restrictions on the base dynamics is the following. In \cite{marx}, first it does everything for those $\omega$ along which $a(T^j\omega)\neq 0$ for all $j\in\Z$ so that the singular operators are bypassed. Then it uses all the properties of the base dynamics to extend results for nonsingular operators to the whole base space. Dealing with singular operators directly is one of the main challenges this paper has.

The other direction of  Theorem~\ref{t.uniformhers} is clearly:
\beq\label{eq:secondhalf}
\rho(J_{a,b})\subset \{E\in\C: B^{E}\in\CD\CS\}.
\eeq
which will be done in Section~\ref{ss:NotUHtoSpectrum}. That is, we need to show $B^E\in\CD\CS$ if $E\in\rho(J_{a,b})$. We will first show $B^E:\Z\to\M(2,\C)$ satisfies the stronger version \ref{eq:equiv_c4} of condtion (4) of Definition~\ref{d.uhsequence}. This is a completely new issue which does not exist either for Schr\"odinger operators or for dynamically defined Jacobi operators. As a consequence, we need to use an estimate bounding the derivatives of a polynomial via the supremum norm of the polynomial on certain compact sets. It is the so-called Markov inequality, see e.g. \cite{pierzchala}. Then we show condition \ref{eq:equiv_c4} via certain induction schemes. 

To construct two invariant directions with desired properties, i.e. conditions (1)-(3) of Definition~\ref{d.uhsequence}, the main work is to have some detailed analysis of the Green's function. Bascially, we need two different types of information regarding the Green's function. The first is a Combes-Thomas esitmate \cite{combesthomas} showing the exponential decaying of the Green's function. This is relatively simple as it follows from some standard computations. The other one, which is the main work of this section, is to relate the column vectors in Green's function to solutions of the eigenvalue equation $J_{a,b}\phi=E\phi$. The fact that $a_j=0$ for certain $j\in\Z$ leads to four differet main cases, each of which has three subcases. 

Once we put together the information above, one can construct the two invariant directions and show they have the desired properties. For each of the conditions (1)-(3) of Definition~\ref{d.uhsequence}, one then needs to regroup the $12$ different cases and treat them with different strategies.

Finally, we will show that all the estimates showing conditions (1)-(3) of Definition~\ref{d.uhsequence} depend only on $d(E,\sigma(J_{a,b}))$ which will eventually allow us to go from Theorem~\ref{t.uniformhers} to the following dynamics version:

\begin{theorem}\label{t.jacobi_johnson}
	Consider the dynamically defined Jacobi operators $J_\omega$, $\omega\in\Omega$, given by \eqref{eq:dynamical_jacobi}. Assume that $T$ be a topologically transitive and let $\omega_0$ be that $\overline{\mathrm{Orb}(\omega_0)}=\Omega$. Then 
	$$
	\rho(J_{\omega_0})=\{E\in\C: (T,B^E)\in\CD\CS\}.
	$$
\end{theorem}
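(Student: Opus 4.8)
The plan is to deduce Theorem~\ref{t.jacobi_johnson} from the already-established sequence version, Theorem~\ref{t.uniformhers}, together with Proposition~\ref{p:dense_orbit_spectrum_J} and the uniformity of constants recorded in Remark~\ref{r:dependence of constants.}. The argument splits into the two inclusions, mirroring the Schr\"odinger discussion of Section~\ref{ss:johnson}.

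\emph{The easy inclusion.} Suppose $(T,B^E)\in\CD\CS$ in the sense of Definition~\ref{d:domination_dynamical}. By condition (b) stated before Definition~\ref{d.uhsequence} (i.e. the sequence $B^E(T^{(\cdot)}\omega_0):\Z\to\M(2,\C)$ inherits dominated splitting from the cocycle --- this is exactly what Corollary~\ref{c:jacobi_johnson} uses), we get $B^E(T^{(\cdot)}\omega_0)\in\CD\CS$. Since the Jacobi cocycle map of $J_{\omega_0}$ is precisely $B^E(T^{(\cdot)}\omega_0)$, Theorem~\ref{t.uniformhers} gives $E\in\rho(J_{\omega_0})$. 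Hence $\{E: (T,B^E)\in\CD\CS\}\subset\rho(J_{\omega_0})$.

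\emph{The main inclusion.} Conversely, fix $E\in\rho(J_{\omega_0})$; we must produce a dominated splitting for the cocycle $(T,B^E)$. For every $\omega\in\Omega$ the sequence $B^E(T^{(\cdot)}\omega)$ is the Jacobi cocycle map of $J_\omega$, and by Proposition~\ref{p:dense_orbit_spectrum_J} (the Jacobi analogue of Proposition~\ref{p:dense_orbit_spectrum}) we have $\sigma(J_\omega)\subset\sigma(J_{\omega_0})$, so $d(E,\sigma(J_\omega))\ge d(E,\sigma(J_{\omega_0}))=:c_0>0$ uniformly in $\omega$. By Theorem~\ref{t.uniformhers}, $B^E(T^{(\cdot)}\omega)\in\CD\CS$ for each $\omega$, and --- this is the crucial point flagged at the end of the excerpt --- the proof of the inclusion \eqref{eq:secondhalf} shows that the constants $N$, $\delta$, and the lower bound $m_N=\inf_j\|B^E_N(T^{(\cdot)}\omega)(j)\|$ in conditions (2)--(4) of Definition~\ref{d.uhsequence} depend only on $d(E,\sigma(J_\omega))$ (and on the uniform bound $M$ for $\|B^E\|_\infty$, which is continuous in $\omega$ hence bounded on the compact $\Omega$). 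Therefore these constants can be chosen \emph{uniformly} in $\omega\in\Omega$. One then defines the candidate invariant sub-bundles on the whole of $\Omega$ by $E^u(\omega):=E^u(0)$, $E^s(\omega):=E^s(0)$ for the splitting of the sequence $B^E(T^{(\cdot)}\omega)$; the uniform growth condition (2), the uniform angle bound (3), and the uniform norm lower bound (4) pass immediately to the cocycle, so $(T,B^E)$ satisfies all the defining conditions of Definition~\ref{d:domination_dynamical}. (Alternatively, as in Section~\ref{ss:johnson}, one can use that dominated splitting for the cocycle is equivalent to a uniform exponential-gap growth estimate, which passes from the single sequence over $\omega_0$ --- whose orbit is dense --- to all of $\Omega$ by continuity of $B^E$ and compactness; the measurability/continuity of $E^{u},E^{s}$ as functions on $\Omega$ then follows from the standard fact that a dominated splitting is automatically continuous.)

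\emph{Anticipated obstacle.} The genuine work is entirely upstream: one must verify that the proof of \eqref{eq:secondhalf} in Section~\ref{ss:NotUHtoSpectrum} really produces constants controlled solely by $d(E,\sigma(J_{a,b}))$ and $M$ --- in particular that the Combes--Thomas decay rate, the Green's-function-to-eigensolution comparisons across the $12$ cases, and the Markov-inequality input yielding condition \ref{eq:equiv_c4} are all quantitatively uniform. Granting that (which the excerpt explicitly promises), the passage from sequences to the cocycle is the routine ``uniformity on a dense orbit propagates by continuity and compactness'' argument, exactly as in the Schr\"odinger case; the one point requiring a line of care is checking that the limiting sub-bundles obtained along the dense orbit glue into genuinely $T$-invariant, continuous one-dimensional bundles over all of $\Omega$, which is where one invokes the known automatic continuity of dominated splittings.
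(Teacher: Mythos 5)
Your proposal is correct and follows essentially the same route as the paper: Corollary~\ref{c:jacobi_johnson} plus Theorem~\ref{t.uniformhers} for the easy inclusion, and for the converse, Proposition~\ref{p:dense_orbit_spectrum_J} to make $d(E,\sigma(J_\omega))$ uniformly bounded below, the uniformity of the constants in Lemmas~\ref{l.Bdomination} and~\ref{l.separationofdir}, and then gluing the fiberwise splittings $E^{s}(\omega)=E^{s}_\omega(0)$, $E^{u}(\omega)=E^{u}_\omega(0)$ into a splitting of the cocycle. The only step you defer to a ``standard fact'' --- that uniform separation of the invariant directions upgrades to the continuity required in Definition~\ref{d:domination_dynamical} --- is exactly what the paper proves as Lemma~\ref{l:equiv_conditions_DS_dyna}, so nothing is missing.
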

It is clear that Theorem~\ref{t.jacobi_johnson} includes Theorems~\ref{t:johnson} and ~\ref{t:marx} as special cases. In some sense, this is perhaps the best version of a Johnson's theorem for dynamically defined Jacobi operators one can hope to obtain as the conditions it assumed are indentical to those of Theorem~\ref{t:johnson}. We will have detailed discussion of Theorem~\ref{t.jacobi_johnson} in Section~\ref{s:dynamicalVersion}.

\section*{Acknowledgements}
Section~\ref{s:dynamicalVersion} uses the stragety of \cite[Section 3.3.2]{zhang2} which was generously shared with Z. Z. by Wilhem Schlag. We are grateful to him for his sharing as well as for some useful suggestions. We are also grateful to Jake Fillman for catching various mistakes and typos in a preliminary version of this paper. We are thankful to Anton Gorodenski for bringing a few relevant references into our attention.

	\section{Domination for $\mathrm{M}(2,\C)$-Sequences and Its Stability}\label{ss:uhsequence} 
	
	Let $\C\PP^1=\C\cup\{\infty\}$ be the one-dimensional complex projective space, or the Rieman sphere. We mainly use the following projection maps from $\C^2\setminus\{\vec 0\}\to\C\PP^1$:
	\beq\label{eq:proj_to_cp1}
	\pi:\C^2\setminus\{\vec 0\}\to\C\PP^1 \mbox{ where }\pi \binom{z_1}{z_2}=\frac{z_2}{z_1}.
	\eeq
	Through this projection, each one-dimensional space in $\C^2$ can be identified with a point in $\C\PP^1$. Hence, we may view a point $z\in\C\PP^1$ as an one-dimension space $\mathrm{span}\{\binom{1}{z}\}$ of  $\C^2$. Note $\infty$ is considered to be $\mathrm{span}\{\binom{0}{1}\}$. For instance, by $\vec v\in z$ we mean $\vec v$ is a vector in the one-dimensional space $z$. In particular, we let $\vec z$ denotes a unit vector in $z$. We shall mainly use the following metric on $\C\PP^1$:
	\beq\label{eq:sphere_dist}
	d(z,z')=\begin{cases}\frac{2|z-z'|}{\sqrt{(1+|z|^2)(1+|z'|^2)}}, &z,\ z'\in\C;\\
	\frac{2}{\sqrt{1+|z|^2}}, &z'=\infty.\end{cases}
	\eeq
	It's a standard fact the metric above on the Rieman sphere is induced by the stereographic projection 
	$$
	P:\mathbb S^3=\{(x_1,x_2,x_3):x_1^2+x_2^2+x_3^2=1\}\to \C\PP^1
	$$
	so that $d(z,z')=\|P^{-1}(z)-P^{-1}(z')\|_{\R^3}$ for $z,z'\in \C\PP^1$, where
	$$
	P(x_1,x_2,x_3)=\begin{cases}\infty &\mbox{if } (x_1,x_2,x_3)= (0,0,1);\\ \frac{x_1+x_2i}{1-x_3}, &\mbox{otherwise}.\end{cases}
	$$
	Note $d(z,z')\le 2|z-z'|$ for all $z$ and $z'\in\C$. Let $\vec v$ and $\vec v'$ be two nonzero vectors in $\C^2$ . We let $(\vec v, \vec v')\in \mathrm{M}(2,\C)$ denotes the matrix whose column vectors are $\vec v$ and $\vec v'$. Then a direct computation shows that
	\beq\label{eq:sphere_dist_vform}
	d(\pi(\vec v), \pi(\vec v'))=\frac{2|\det (\vec v, \vec v')|}{\|\vec v\|\cdot \|\vec v'\|}.
	\eeq
	In particular, if $\vec v$ and $\vec v'$ are two unit vectors, then  
	\beq \label{eq:sphere_dist_vform2}
	d(\pi(\vec v), \pi(\vec v'))=2|\det (\vec v, \vec v')|.
	\eeq
	Since one dimensional space can be identified by the points in $\C\PP^1$, abusing the notation slightly, for two one-dimensional subspaces $V$ and  $W$ of $\C^2$, we define
	\beq\label{eq:DistofComplexLine}
	d(V,W):=d(\pi(\vec v),\pi(\vec w))
	\eeq
	 where $\vec v\in V$ and $\vec w\in W$ are nonzero vectors.
	
	Throughout this paper, we make the following assumptions: $C,\ c$ will be universal constants, where $C$ is large and $c$ is small.

	Let $A=\left(\begin{smallmatrix}a &b \\ c & d\end{smallmatrix}\right)\in \mathrm{M}(2,\C)$ be a nonzero matrix such that $\|A\|<M$. Under the projection $\pi$ as described in \eqref{eq:proj_to_cp1}, there is an induced projectivized map of $A$ acting on projective space $(\C\PP^1)\setminus \{\alpha\}$, where $\alpha$ is the eigenspace of the $0$ eigenvalue of $A$, if such exists. We denote the induced map by $A\cdot z$. Then a direct computation shows that
	\beq\label{eq:inducedProj2}
	A\cdot z: (\C\PP^1)\setminus\{\alpha\}\to\C\PP^1,\ A\cdot z=\frac{c+dz}{a+bz}.
	\eeq
    Recall $\vec z$ denotes a unit vector in the one-dimensional space $z\in \C\PP^1$. By \eqref{eq:sphere_dist_vform} and \eqref{eq:sphere_dist_vform2}, we have
    \begin{align}
    \nonumber d(A\cdot z, A\cdot  z')&= d(A\cdot \pi(\vec z), A\cdot \pi(\vec z'))\\
    \nonumber &=d(\pi (A\vec z), \pi (A\vec z'))\\
    &=\frac{2|\det (A\vec z, A\vec z')|}{\|A\vec z\|\cdot \|A\vec z'\|}\\
   \nonumber  &=\frac{2|\det (A)|}{\|A\vec z\|\cdot \|A\vec z'\|}|\det(\vec z, \vec z')|\\
    \nonumber &=\frac{2|\det (A)|}{\|A\vec z\|\cdot \|A\vec z'\|}|d(z,z')|
    \end{align}
   If $|\det(A)|>\delta>0$, then it holds that
   $$
   \inf_{\|\vec v\|=1}\|A\vec v\|=\frac{|\det(A)|}{\|A\|}>\frac\delta{M}
   $$
   which in turn implies that (change $M$ to $2M$ if necessary)
   \beq\label{eq:boundedDerivatives}
   \frac\delta{M^2}d(z,z')\le d(A\cdot z, A\cdot z')\le \frac{M^4}{\delta^2}d(z,z')\mbox{ for all }z, z'\in\C\PP^1.
   \eeq
	Sometimes, we may need to use an another projection
	$$
	\bar \pi:\C^2\setminus\{\vec 0\}\to\C\PP^1 \mbox{ where }\bar \pi \binom{z_1}{z_2}=\frac{z_1}{z_2}.
	$$
Then the projectivized map of $A$ under $\bar \pi$, denoted by $\bar A$, becomes the well-known M\"obius transformation:
	\beq\label{eq:inducedProj3}
	\bar A(z)=\frac{az+b}{cz+d}.
	\eeq
All the computation above still hold true for $\bar A$. In fact, $d(\bar A(z), \bar A(z') )=d(A\cdot z, A\cdot z')$.

\subsection{Invariant cone field implies domination: proof of Theorem~\ref{l:invConeFieldtoDS}}

In this section, we will prove Theorem~\ref{l.opennessDS}. We begin with the following lemma.

\begin{lemma}\label{l.schwartz}
	Let $f:\D_\a\to \D_{\alpha'}$ be a holomorphic function for some $0<\alpha'<\alpha$. Then there exists a  $0<\rho=\rho(\a,\a')<1$ such that it holds for all $z_1, z_2\in\D_\a$ that
	$$ 
	\left | \frac{f(z_2)-f(z_1)}{\alpha^2-\overline{f(z_1)}{f(z_2)}}\right | < \rho\left | \frac{z_2-z_1}{\alpha^2-\overline{z_1}z_2} \right |.
	$$
\end{lemma}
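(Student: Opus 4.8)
\emph{Plan.} Lemma~\ref{l.schwartz} is a contractive (quantitative) version of the Schwarz--Pick lemma, and the strict gap $\a'<\a$ between the two disks is precisely what forces the contraction constant $\rho<1$. So the plan is to normalize everything to the unit disk $\D=\D_1$ and then squeeze an explicit factor out of the Schwarz lemma. Put $r:=\a'/\a\in(0,1)$ and $g(w):=\a^{-1}f(\a w)$ for $w\in\D$; then $g\colon\D\to\D_r$ is holomorphic, since $|f(\a w)|<\a'$. Writing $z_i=\a w_i$ and $f(z_i)=\a\,g(w_i)$, a one-line computation shows that
\[
\frac{f(z_2)-f(z_1)}{\a^2-\overline{f(z_1)}f(z_2)}=\frac{1}{\a}\cdot\frac{g(w_2)-g(w_1)}{1-\overline{g(w_1)}\,g(w_2)},\qquad
\frac{z_2-z_1}{\a^2-\overline{z_1}z_2}=\frac{1}{\a}\cdot\frac{w_2-w_1}{1-\overline{w_1}w_2},
\]
so both sides of the desired inequality pick up the same positive factor $\a^{-1}$, and (excluding the trivial case $z_1=z_2$, where both sides vanish) the claim reduces to finding $\rho=\rho(r)\in(0,1)$ with
\[
\Bigl|\tfrac{g(w_2)-g(w_1)}{1-\overline{g(w_1)}g(w_2)}\Bigr|\le R\,\Bigl|\tfrac{w_2-w_1}{1-\overline{w_1}w_2}\Bigr|,\qquad R:=\tfrac{2r}{1+r^2}\in(0,1),
\]
for all $w_1,w_2\in\D$; one then takes $\rho:=\tfrac12(1+R)\in(0,1)$ to upgrade this to the strict inequality whenever $w_1\neq w_2$. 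Writing $\rho_{\D}(u,v):=\bigl|\tfrac{u-v}{1-\bar u v}\bigr|$ for the pseudohyperbolic distance on $\D$, the target is $\rho_{\D}(g(w_1),g(w_2))\le R\,\rho_{\D}(w_1,w_2)$.

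For this, fix $w_1,w_2\in\D$, set $a:=g(w_1)\in\D_r$, and consider the holomorphic self-map of $\D$ given by $F:=\varphi_a\circ g\circ\varphi_{w_1}$, where $\varphi_c(\zeta):=\tfrac{c-\zeta}{1-\bar c\,\zeta}$ is the involutive automorphism of $\D$ interchanging $0$ and $c$. Indeed $F$ maps $\D$ into $\D$ (composition of $\varphi_{w_1}\colon\D\to\D$, $g\colon\D\to\D_r\subset\D$, and $\varphi_a\colon\D\to\D$, the last well defined on $\D_r$ since $|1/\bar a|=|a|^{-1}>1>r$), and $F(0)=\varphi_a(g(w_1))=\varphi_a(a)=0$. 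The key point is that $F(\D)\subseteq\varphi_a(\D_r)$ is contained in a disk of radius strictly less than $1$: for any $\zeta$ with $|\zeta|<r$, the triangle inequality for the pseudohyperbolic metric together with the monotonicity of $(s,t)\mapsto\tfrac{s+t}{1+st}$ on $[0,1)^2$ gives
\[
|\varphi_a(\zeta)|=\rho_{\D}(\zeta,a)\le\frac{|\zeta|+|a|}{1+|\zeta|\,|a|}<\frac{2r}{1+r^2}=R,
\]
because $|\zeta|<r$ and $|a|<r$. Hence $F(\D)\subseteq\D_R$, so $R^{-1}F$ is a holomorphic self-map of $\D$ fixing the origin, and the ordinary Schwarz lemma yields $|F(\zeta)|\le R\,|\zeta|$ for all $\zeta\in\D$. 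Evaluating at $\zeta=\varphi_{w_1}(w_2)$ and using that $\varphi_{w_1}$ is an involution, we obtain $F(\varphi_{w_1}(w_2))=\varphi_a(g(w_2))=\varphi_{g(w_1)}(g(w_2))$, whence
\[
\rho_{\D}(g(w_1),g(w_2))=\bigl|\varphi_{g(w_1)}(g(w_2))\bigr|=|F(\varphi_{w_1}(w_2))|\le R\,|\varphi_{w_1}(w_2)|=R\,\rho_{\D}(w_1,w_2).
\]

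Undoing the normalization then gives $\bigl|\tfrac{f(z_2)-f(z_1)}{\a^2-\overline{f(z_1)}f(z_2)}\bigr|\le R\,\bigl|\tfrac{z_2-z_1}{\a^2-\overline{z_1}z_2}\bigr|$, and since $R<1$ the choice $\rho=\rho(\a,\a'):=\tfrac12(1+R)\in(0,1)$ finishes the proof for $z_1\neq z_2$. I do not expect any genuine obstacle here: the argument is just a careful packaging of Schwarz--Pick. The one place requiring attention is the extraction of the \emph{explicit, $g$-independent} constant $R=\tfrac{2r}{1+r^2}<1$ — that is, the fact that the strict inclusion $\overline{g(\D)}\subseteq\D_r\Subset\D$ forces a definite contraction of the pseudohyperbolic metric, which is exactly what the monotone bound on $|\varphi_a(\zeta)|$ above supplies. (Alternatively, one can avoid $F$ altogether and combine Schwarz--Pick for $g/r\colon\D\to\D$ with the elementary estimate $\rho_{\D}(p,q)\le R\,\rho_{\D}(p/r,q/r)$ for $p,q\in\D_r$, which reduces to the real inequality $(1+r^2)|r^2-\bar p q|\le 2r^2|1-\bar p q|$ valid for $|\bar p q|<r^2$; but the route above is cleaner.)
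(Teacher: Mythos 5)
Your proof is correct, and it takes a genuinely different route from the paper's, even though both ultimately rest on the Schwarz lemma and — pleasingly — arrive at the same constant $R=\tfrac{2r}{1+r^2}=\tfrac{2\a\a'}{\a^2+\a'^2}$. The paper normalizes by $\a'$ so that $g(z)=\tfrac1{\a'}f(\a z)$ is a self-map of $\D$, applies Schwarz--Pick directly, and is then left with a mismatch between the ``$\a'^2$-pseudohyperbolic'' quantity it has controlled and the ``$\a^2$-pseudohyperbolic'' quantity in the statement; the contraction factor $\rho<1$ is extracted by an explicit calculus bound on the M\"obius-type ratio $\bigl|\tfrac{\a'^2-w}{\a^2-w}\bigr|$ over $|w|<\a'^2$. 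You instead normalize by $\a$, so that both sides of the inequality rescale by the identical factor $\a^{-1}$ and the entire content is isolated in a clean statement: a holomorphic map $g:\D\to\D_r$ contracts the pseudohyperbolic metric by the definite factor $R=\tfrac{2r}{1+r^2}$. You prove this by the standard conjugation $F=\varphi_a\circ g\circ\varphi_{w_1}$, observing that $F(\D)\subseteq\varphi_a(\D_r)\subseteq\D_R$ via the strong triangle inequality for the pseudohyperbolic metric and the monotonicity of $(s,t)\mapsto\tfrac{s+t}{1+st}$, and then invoking the classical Schwarz lemma for $R^{-1}F$. Your route is arguably more conceptual (the gap $\a'<\a$ enters exactly once, as the radius of the image disk), at the cost of quoting slightly more machinery about disc automorphisms; the paper's is more computational but self-contained. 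You are also right to flag that the strict inequality in the statement degenerates when $z_1=z_2$ (both sides vanish) — the paper silently ignores this trivial case, and your device of enlarging $R$ to $\rho=\tfrac12(1+R)$ handles strictness for $z_1\neq z_2$ cleanly.
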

\begin{proof}
	For simplicity, we set $\D=\D_1$. Define $g : \D \to \D$ to be 
	\beq\label{eq:gholom}
	g(z): = \frac{1}{\alpha'}f(\alpha z)
	\eeq
	which is a holomorphic function on the unit disc. By Schwarz–Pick theorem (see \cite{dineen}), for all $z_1, z_2 \in \D$, 
	$$\left | \frac{g(z_2)-g(z_1)}{1-\overline{g(z_1)}g(z_2)} \right | \le \left | \frac{z_2-z_1}{1-\overline{z_1}z_2} \right |.$$
	Fom (\ref{eq:gholom}), it holds for all $z\in \D_{\alpha}$ that
	$$f(z)  = \alpha' g\left(\frac{z  }{\alpha}\right).$$
	Hence, for all $z_1  , z_2   \in \D_{\alpha}  $, we have 
	\begin{align*}
		\left | \frac{f(z_2)-f(z_1)}{\alpha'^2-\overline{f(z_1)}{f(z_2)}}\right |
		&= \left | \frac{\alpha'g\left(\frac{z_1}{\alpha}\right)-\alpha'g\left(\frac{z_2}{\alpha}\right)}{\alpha'^2-\overline{\alpha'g\left(\frac{z_1}{\alpha}\right)}\alpha'g\left(\frac{z_2}{\alpha}\right)} \right |\\
		&= \frac{1}{\alpha'}\left | \frac{g\left(\frac{z_1}{\alpha}\right)-g\left(\frac{z_2}{\alpha}\right)}{1-\overline{g\left(\frac{z_1}{\alpha}\right)}g\left(\frac{z_2}{\alpha}\right)} \right | \\
		& \le \frac{1}{\alpha'}\left | \frac{ \frac{z_2}{\alpha}- \frac{z_1}{\alpha}}{1-\overline{ \frac{z_1}{\alpha}} \frac{z_2}{\alpha}} \right | \\
		&=\frac{\alpha}{\alpha'}\left | \frac{z_2-z_1}{\alpha^2-\overline{z_1}z_2} \right |.
	\end{align*}
	We can rewrite
	\begin{align*}
		\left | \frac{f   ( z_2)  -f   ( z_1)  }{\alpha'^2-\overline{f   ( z_1)  }f   ( z_2)  } \right | &= \frac{\alpha^2}{\alpha'^2}\left | \frac{f   ( z_2)  -f   ( z_1)  }{\alpha^2-(\frac{\alpha}{\alpha'})^2\overline{f   ( z_1 ) }f   ( z_2 ) } \right |\\
		&= \frac{\alpha^2}{\alpha'^2} \left | \frac{f   ( z_2 ) -f   ( z_1)  }{\alpha^2-\overline{f   ( z_1)  }f   ( z_2)  } \right |\left | \frac{\alpha^2-\overline{f   ( z_1 ) }f   ( z_2  ) }{\alpha^2-(\frac{\alpha}{\alpha'})^2\overline{f   ( z_1 ) }f   ( z_2 ) } \right |
	\end{align*}	
	Hence, combining the two estimates above, we obtain for all $z_1$ and $z_2$ in $\D_\alpha$ that
	\begin{align}\label{eq:splemma}
		\left | \frac{f(z_2)-f(z_1)}{\alpha^2-\overline{f(z_1)}{f(z_2)}}\right |
		&= \left|\frac{\alpha'^2-\overline{f(z_1)}{f(z_2)}}{\alpha^2-\overline{f(z_1)}{f(z_2)}}\right|\cdot\left | \frac{f(z_2)-f(z_1)}{\alpha'^2-\overline{f(z_1)}{f(z_2)}}\right |\\
		\nonumber &\le 
		\frac{\alpha}{\alpha'}  \left|\frac{\alpha'^2-\overline{f(z_1)}{f(z_2)}}{\alpha^2-\overline{f(z_1)}{f(z_2)}}\right|  \cdot \left | \frac{z_2  -z_1  }{\alpha^2-\overline{z_1  }z_2  } \right |.
	\end{align}
	All that is left to show is that 
	\beq\label{eq:splemma1}
	\frac{\alpha}{\alpha'}  \left|\frac{\alpha'^2-\overline{f(z_1)}{f(z_2)}}{\alpha^2-\overline{f(z_1)}{f(z_2)}}\right|<\rho<1 \mbox{ for all } z_1, z_2\in\D_\a.
	\eeq
Since $|\overline{f(z_1)}f(z_2)| < \alpha'^2$ for all $z \in \D_{\alpha}$, it is clear that the estimate above can be reduced to find an upbound of the following function
	$$
	h:\D_a\to\R \mbox{ where }\ h(z)=\left|\frac{a-z}{b-z}\right| \mbox{ and }0<a<b.
	$$
	Set $z=te^{i\t}$ for some $0\le t<a$. It is then straightforward to see that 
	$$
	h(te^{i\t})=\left(\frac{a^2+t^2-2at\cos\t}{b^2+t^2-2bt\cos\t}\right)^{\frac12}
	$$
	and $\max_{\t\in[0,2\pi)}h(te^{i\t})=h(te^{i\pi})=\frac{a+t}{b+t}$ which is monotone increasing in $t\in [0,\infty)$. Hence we obtain 
	$$
	\sup_{z\in\D_a}|h(z)|\le \frac{2a}{a+b}.
	$$
	Thus, we have for the left hand side of \eqref{eq:splemma1} that
	$$
	\frac{\alpha}{\alpha'}  \left|\frac{\alpha'^2-\overline{f(z_1)}{f(z_2)}}{\alpha^2-\overline{f(z_1)}{f(z_2)}}\right|\le \frac{\alpha}{\alpha'}\frac{2\alpha'^2}{\alpha^2+\alpha'^2}=\frac{2\frac{\alpha}{\alpha'}}{1+(\frac{\alpha}{\alpha'})^2}<1 \mbox{ for all }z_1, z_2\in \D_\alpha.
	$$
	In other words, we can set $\rho=\rho(\a,\a')=\frac{2\frac{\alpha}{\alpha'}}{1+(\frac{\alpha}{\alpha'})^2}<1$.
	
\end{proof}

\begin{proof}[Proof of Theorem~\ref{l:invConeFieldtoDS}]
	Replacing $\a$ by a smaller number if necessary, we may assume that for all $j\in\Z$
	$$
	A(j)\cdot(\overline\D_\a)\subset \overline{A(j)\cdot(\D_\a)}\subset \D_{\a'}.
	$$
	By Lemma~\ref{l.schwartz}, there exists a $0<\rho=\rho(\a,\a')<1$ such that it holds for all $j\in\Z$ and all $z_1, z_2\in\D_\alpha$ that
	$$
	\left | \frac{A(j)\cdot z_2-A(j)\cdot z_1}{\alpha^2-\overline{A(j)\cdot z_1 }A(j)\cdot z_2}\right | < \rho\left | \frac{z_2-z_1}{\alpha^2-\overline{z_1}z_2} \right |.
	$$
	Thus we have for all $n\in\Z_+$, all $j\in\Z$, and all $z_1,z_2\in\overline{\D}_\alpha$ that
	$$
	\left | \frac{A_n(j)\cdot z_2-A_n(j)\cdot z_1}{\alpha^2-\overline{A_n(j)\cdot z_1 }A_n(j)\cdot z_2}\right | < \rho^{n-1}	\left | \frac{A(j)\cdot z_2-A(j)\cdot z_1}{\alpha^2-\overline{A(j)\cdot z_1 }A(j)\cdot z_2}\right |
	$$
	which in turn implies that 
	\beq\label{eq:unif_contra_dist}
	\left | A_n(j)\cdot z_2-A_n(j)\cdot z_1\right|<\frac{2\a^2\a'}{\a^2-\a'^2}\rho^{n-1}\mbox{ for all }z_1,z_2\in\D_\a
	\eeq
	Hence, the sequence of sets 
	$$
	A_n(j-n)\cdot(\overline\D_\alpha),\  n \ge 1
	$$
	are nested compact sets whose diameter goes to $0$ as $n$ goes to $\infty$. Hence, 
	$$
	\bigcap_{n \ge 1}A_n(j-n) \cdot(\overline\D_\alpha)
	$$ 
	is a single point. We may then set
	$$
	E^u(j):= \bigcap_{n \ge 1}A_n(j-n) \cdot (\overline\D_\alpha).
	$$
	One readily checks that $E^u$ is $A$- invariant. Indeed, we have
	\begin{align*}
		A(j)\cdot E^u(j)
		&=A(j)\cdot \left(\bigcap_{n \ge 1}A_n(j-n) \cdot (\overline\D_\alpha)\right)\\
		&\subset \bigcap_{n \ge 1}A(j)\cdot \left[A_n(j-n) \cdot (\overline\D_\alpha)\right]\\
		&=\bigcap_{n \ge 1}A_n(j+1-n) \cdot \left[(A(j-n)\overline\D_\alpha)\right]\\
		&\subset \bigcap_{n \ge 1}A_n(j+1-n) \cdot (\overline \D_\alpha)\\
		& =E^u(j+1),
	\end{align*}
	which is nothing other than $A(j)\cdot E^u(j)=E^u(j+1)$. Moreover, it is evident that 
	\beq\label{eq:u_inside_Da'}
	E^u(j)\in \D_{\a'}\mbox{ for all }j\in\Z.
	\eeq
	
	Next, we compute $E^s(j)$ for all $j\in\Z$. Suppose $\det A \neq 0$. Then it is clear that $A\cdot (\D_\a)\subset \D_{\a'}$ implies
	$$
	A^{-1}\cdot (\D^\complement_{\alpha'})\subset \D^\complement_{\alpha}.
	$$
	Let $g:\C\PP^1\to\C\PP^1$ be the diffeomorphism such that $g(z)=\frac1z$. Then it is clear that $g=g^{-1}$ and 
	$$
	g\circ A\cdot =\bar A\circ g.
	$$
	Also, it is evident that $g(\D^\complement_r)=\overline\D_{\frac1r}$. Thus if $\det(A(j))\neq 0$, then the estimates above imply
	$$
	\overline {A(j)^{-1}}\cdot (\overline\D_{\frac1{\alpha'}})=\overline {A(j)^{-1}}\cdot g( \D^\complement_{\alpha'})=g\circ A(j)^{-1}\cdot (\D^\complement_{\alpha'})\subset g(\D^\complement_{\alpha})=\overline\D_{\frac1\a}
	$$
	Thus we have for all $\frac1{\a'}>r>\frac1\a$
	$$
	\overline{A(j)^{-1}}\left(\overline\D_\frac{1}{\a'}\right)\subset \overline\D_{r}.
	$$
	
	Now we fix an arbitrary $j_0\in\Z$. To find $E^s(j_0)$, we consider two different cases:
	\vskip .1cm
	
	\noindent Case I. $\det A(j)\neq 0$ for all $j\ge j_0$, then similar to the argument where we obtained $E^u$, we have that
	$$
	\left\{\overline{A_{-n}(j_0+n)}\cdot\big(\overline\D_{\frac1{\a'}}\big)\right\}_{n\ge 1}=\left\{\overline{A_{n}(j_0)^{-1}}\cdot\big(\overline\D_{\frac1{\a'}}\big)\right\}_{n\ge 1}
	$$
	is a nested sequence of compact sets whose diameters tend to zero as $n\to\infty$. Thus we can define
	$$
	E^s(j_0):=g\left[\bigcap_{n \ge 1}\overline{A_{-n}(j_0+n)}\cdot\big(\overline\D_{\frac1{\a'}}\big)\right].
	$$
	Note that $E^s(j)$ can be defined in the same way for all $j\ge j_0$ in this case. Now similar to the $E^{u}$, one readily checks for all $j\ge j_0$ that
	$$
	A(j)\cdot E^s(j)=E^s(j+1)\mbox{ and }E^{s}(j)\in g\left(\bigcap_{r>\frac1\a}\D_{r}\right)=g\big(\overline\D_{\frac1\a}\big)=\D^\complement_\a.
	$$
	In particular, the conclusion above hold true for $j=j_0$.
	\vskip .1cm
	
	\noindent Case II. If $\det A(j)=0$ for some $j \ge j_0$, then we set $j'=\min\{j\ge j_0: \det A(j)=0\}$ and define
	$$
	E^s(j):= \ker (A_{j'-j_0+1}(j_0)).
	$$
	Note that our condition clearly implies that $\D_\a$ is contained in the domain of $A_n(j)$ for all $j$ and all $n\ge 1$. Thus $A_n(j)$ cannot be a zero matrix which implies that $E^s(j)$ is a one-dimensional subspace of $\C^2$. Moreover, since $A_{j'-j_0}(j_0)$ cannot be defined at $\pi(E^s(j))$, it cannot be in the domain of $A_{j'-j_0}(j_0)$. In particular, it holds that
	$$
	E^s(j_0)\in \D^\complement_\a.
	$$
	To show the invariance, we have to further consider two subcases. First, we assume $j'=j_0$. Then 
	$$
	E^s(j_0)=\ker{A(j_0)}\mbox{ and }A(j_0)[E^s(j_0)]=\{\vec 0\}\subset E^s(j_0+1)
	$$
	no matter what is $E^s(j_0+1)$. Second, we consider $j'>j_0$. Then $j'\ge j_0+1$ which implies $j_0+1$ falls into in the present second case as well. Thus, by our definition, we have 
	$$
	E^s(j_0+1)=\ker (A_{j'-j_0}(j_0+1)).
	$$
	Moreover, we have $\det A(j_0)\neq 0$ in this case. Thus we must have 
	\begin{align*}
		E^s(j_0)
		&=\ker \big[A_{j'-j_0+1}(j_0)\big]\\
		&=\ker \big[A_{j'-j_0}(j_0+1)A(j_0)\big]\\
		&=A(j_0)^{-1}[\ker (A_{j'-j_0}(j_0+1))]\\
		&=A(j_0)^{-1}[E^s(j_0+1)]
	\end{align*}
	or equivalently
	$$
	A(j_0)[E^s(j)]=E^s(j_0+1).
	$$
	
	Combining cases I and II, we have defined $E^s(j)$ for all $j\in\Z$, showed its invariance, and obtained $E^s(j)\in\D^\complement_\a$. Thus we have
	$$
	d(E^s(j),E^u(j))\ge \inf\{d(z,z'): z\in \D^\complement_{\a}, z'\in\D_{\a'}\}
	$$
	It is straightforward calculation that for all $z\in \D^\complement_{\a}$ and all $z'\in\D_{\a'}$ that
	\begin{align}
		\nonumber d(z,z')&=\frac{2|z-z'|}{\sqrt{(1+|z|^2)(1+|z'|^2)}}\ge \frac{2(|z|-\a')}{\sqrt{(1+|z|^2)(1+\a'^2)}}\\
		\label{eq:uniform_dist}&= \frac{2(1-\frac{\a'}{|z|})}{\sqrt{(1+|z|^{-2})(1+\a'^2)}}\ge \frac{2(1-\frac{\a'}{\a})}{\sqrt{(1+\a^{-2})(1+\a'^2)}}\\
		\nonumber &= \frac{2(1-\frac{\a'}{\a})}{\sqrt{(1+\a^{-2})(1+\a'^2)}}= \frac{2(\a-\a')}{\sqrt{(1+\a^{2})(1+\a'^2)}}.
	\end{align}
	Let $\delta=\frac{2(\a-\a')}{\sqrt{(1+\a^{2})(1+\a'^2)}}>0$. Thus, we have 
	\beq\label{eq:unif_separation_us}
	\inf_{j\in\Z}d(E^s(j),E^u(j))>\delta.
	\eeq
	Let $\vec u(j)\in E^u(j)$ and $\vec s(j)\in E^s(j)$ be unit vectors. We set $D(j) = \big(\vec u(j),\ \vec s(j)\big)\in \M(2,\C)$. In other words, $\vec u(j)$ and $\vec s(j)$ are the column vectors of $D(j)$.  By \eqref{eq:sphere_dist_vform2} and \eqref{eq:unif_separation_us}, we have
	\beq\label{eq:unifPosdetD}
	\inf_{j\in\Z}|\det D(j)|=\inf_{j\in\Z}d(E^u(j),E^s(j))>\delta.
	\eeq
	Clearly, $D(j)$ are invertible for all $j \in \Z$ and there are $c,C$, depending on $\delta$, such that 
	\beq\label{eq:unifNormBoundD}
	c<\|D(j)^\pm\|<C\mbox{ for all }j\in\Z.
	\eeq
 Together with $|\det(D(j))|\le \|D(j)\|^2\le 1$, \eqref{eq:unifNormBoundD} implies
	\beq\label{eq:bounddetD}
	1\le |\det D(j)^{-1}|<C\mbox{ for all }j\in\Z.
	\eeq
	Thus, we my apply \eqref{eq:boundedDerivatives} to $D(j)$ and obtain for some $c, C$, depending on $\delta$ only, such that
	\beq\label{eq:bounded_dD}
	cd(z,z')<d(D(j)^{-1}\cdot z,D(j)^{-1}\cdot z)<Cd(z,z')\mbox{ for all }j\in\Z \mbox{ and all } z, z'\in\C\PP^1.
	\eeq
    We define
	$$
	\Lambda(j):=D(j+1)^{-1}A(j)D(j).
	$$
	Since $\Lambda(j)$ leaves one dimensional spaces correspond to both $0$ and $\infty\in\C\PP^1$ invariant, it must be diagonal. In other words, there are $\lambda_j^+$ and $\l^-_j$ for all $j\in\Z$ such that
	$$
	\Lambda(j)=\begin{pmatrix} \l_j^+ & 0\\\ 0 &\l_j^-\end{pmatrix}.
	$$
	\eqref{eq:unifNormBoundD} together with the fact $0<m_1<\|A(j)\|<M$ for all $j\in\Z$ imply that
	\beq\label{eq:bound_norm_lambda}
	c_1<\|\Lambda(j)\|<C_1 \mbox{ for all }j\in\Z,
	\eeq
where $c_1, C_1$ depend on $\a, \a', m_1$ and $M$. Set $\CF(j) = D(j)^{-1}\D_\alpha$. The computation \eqref{eq:uniform_dist} shows that $\inf_{j\in\Z}d(E^u(j), \partial\D_\a)>\delta$ where $\partial \D_\a$ is the boundary of $\D_\a$. Thus \eqref{eq:bounded_dD} implies that
 \beq
 \inf_{j\in\Z}d(0,\partial\CF(j))=\inf_{j\in\Z}d\big(D(j)^{-1}\cdot E^u(j),D(j)^{-1}\cdot \partial \D_\a\big)>c\delta>0
 \eeq
In particular, there is a $r'>0$, depending on $\delta$ only, such that
$$
\D_{r'}\subset\CF(j)\mbox{ for all }j\in\Z.
$$ 
Let $\mathrm{diam}(S)$ denotes the diameter of a set $S\subset \C\PP^1$ under the metric $d$. By \eqref{eq:unif_contra_dist}, the fact $d(z,z')<2|z-z'|$ for all $z,z'\in\C$, and \eqref{eq:bounded_dD}, we obtain
\begin{align*}
	\mathrm{diam}\big(\Lambda_n(j)\CF(j) \big)&= \mathrm{diam}\big(D^{-1}(j)A_n(j)\cdot\D_\a\big) \\
&\le C\mathrm{diam}\big(A_n(j)\cdot\D_\a\big)\\
	&\le C'\rho^{n-1}
\end{align*}
where $0<\rho<1$ and $C'$ depend only on $\delta$. Note by our construction, $E^u(j)\in A_{n}(j-n)\cdot \D_\a$ for all $n\ge 1$ and all $j\in\Z$ which implies that $0\in \Lambda_n(j)\CF(j)$ for all $n\ge 1$ and $j\in\Z$. Thus the estimate above shows that $\Lambda_n(j)\CF(j)$ uniformly converges to $0$ as $n\to\infty$. In particular, there exists a $N\ge 1$, depending on $\delta$ only, such that for all $j\in \Z$
$$
\Lambda_N(j)\cdot \big(\D_{r'}\big)\subset \D_{\frac{r'}2}
$$
where
$$
\Lambda_N(j)=\begin{pmatrix}\prod_{k=j}^{j+N-1}\l_k^+ &0\\0 &\prod_{k=j}^{j+N-1}\l_k^- \end{pmatrix}
$$
Hence, we must have for all $j\in\Z$ that
$$
\left |\prod_{k=j}^{j+N-1}\l_k^+\right |> 2\left |\prod_{k=j}^{j+N-1}\l_k^- \right |.
$$
Since $A_N(j)D(j)=D(j+N)\Lambda_N(j)$ where $D(j)=(\vec u(j), \vec s(j))$, the estimate above implies that
$$
\|A_N(j)\vec u(j)\|>2\|A_N(j) \vec s(j)\|\mbox{ for all }j\in\Z.
$$
Recall that $\vec u(j)$ and $\vec s(j)$ are unit vectors in $E^u(j)$ and $E^s(j)$, respectively. Thus we have showed that $E^u$ dominates $E^s$. Moreover, the corresponding constant $\delta(A)=\delta$ and $N(A)=N$ depend on $\delta$, hence on $\a$ and $\a'$, only.

To complete the proof that $A\in\CD\CS$, the only thing left is to show for all $n\ge 1$, $\inf_{j\in\Z}\|A_n(j)\|>0$. Similar to the argument contained in Remark~\ref{r:domination_sequence} and since $A$ is conjugate to $\Lambda$ via $D$ which satisfies \eqref{eq:unifNormBoundD}, we only need to show such condition holds true for $\Lambda$. Since $\Lambda$ is diagonal, it is then sufficient to show
$$
\inf_{j\in\Z}|\lambda^+_j|>0.
$$
Suppose it is not true. Then for any $\e>0$, there exists a $j\in\Z$ such that $|\l^+_{j}|<\e$. By \eqref{eq:bound_norm_lambda}, we must have $|\l^-_{j}|>c$. Then, we must have for all $t>0$ that
$$
\D_{\frac{c}\e t}\subset \Lambda(j)\cdot{\D_t}
$$
By \eqref{eq:bounded_dD} and the simple fact that $d(z,z')<|z-z'|<C(r)d(z,z')$ for all $z,z'\in\D_r$, there are $0<\a_1<\a_2$ such that $\D_{\alpha_1}\subset \CF(j)=D(j)^{-1}\D_\a\subset \D_{\a_2}$ for all $j\in\Z$. Hence, we obtain
$$
\D_{\frac{c}\e\a_1}\subset\Lambda(j)\cdot \D_{\a_1}\subset \Lambda(j)\cdot \CF(j) \subset\CF(j+1)
\subset \D_{\a_2}
$$
which is clearly not true when $\e$ is sufficiently small. This completes the proof.
\end{proof}

\subsection{Stability of domination: proof of Theorem~\ref{l.opennessDS}}

	\begin{lemma}\label{l.B_BN_equiv}
		Let $B \in \ell^{\infty}(\Z, \mathrm{M}(2,\C))$. Define $B^{(m,N)}(j):=B_N(jN+m)$. Then, $B \in \CD\CS$ if and only if there exists $N \in \Z_{+}$, such that $B^{(m,N)} \in \CD\CS$ for all $m=0,1,2,...,N-1$.
	\end{lemma}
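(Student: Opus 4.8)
The plan is to prove the two implications of the equivalence separately; the backward implication carries all the weight.

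\emph{($\Rightarrow$)} Assume $B\in\CD\CS$, with bundles $E^u,E^s$, gap $\delta$, domination step $N_0$ (and $\lambda=2$) as in Definition~\ref{d.uhsequence}, and take $N:=N_0$. For each $m\in\{0,\dots,N_0-1\}$ declare the candidate bundles for $B^{(m,N_0)}$ to be $j\mapsto E^u(jN_0+m)$ and $j\mapsto E^s(jN_0+m)$. Since $B^{(m,N_0)}(j)=B_{N_0}(jN_0+m)$, condition (1) for $B^{(m,N_0)}$ is exactly the $B_{N_0}$-invariance implied by condition (1) for $B$; condition (2) at step $1$ is exactly condition (2) for $B$; condition (3) is inherited verbatim; and condition (4) at step $1$ reads $\inf_j\|B_{N_0}(jN_0+m)\|>0$, which follows from condition (4)$'$ for $B$ (available by Remark~\ref{r:domination_sequence}). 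Hence $B^{(m,N_0)}\in\CD\CS$ for all $m$.

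\emph{($\Leftarrow$)} Assume $B^{(m,N)}\in\CD\CS$ for every $m=0,\dots,N-1$, with data $E^u_m,E^s_m,\delta_m,N_m$. The natural candidate splitting for $B$ is the \emph{block gluing}
$$
E^u(jN+m):=E^u_m(j),\qquad E^s(jN+m):=E^s_m(j)\qquad(0\le m<N).
$$
Granting for a moment that this is $B$-invariant (condition (1)), the other three conditions are short. For condition (4)$'$: from $B_{kN}(i)=B_{kN-n}(i+n)\,B_n(i)$ and $\|B\|_\infty<M$ one gets $\|B_n(i)\|\ge M^{-(kN-n)}\|B_{kN}(i)\|$ whenever $kN\ge n$, while $B_{kN}(jN+m)=(B^{(m,N)})_k(j)$, whose norm is bounded below uniformly in $j$ by condition (4)$'$ for $B^{(m,N)}$; taking the minimum over $m$ gives $\inf_i\|B_n(i)\|>0$ for every $n\ge1$, which is conditions (4)$'$ and (4) for $B$. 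For condition (3): $d(E^u(jN+m),E^s(jN+m))=d(E^u_m(j),E^s_m(j))>\delta_m\ge\min_m\delta_m>0$. For condition (2): put $K:=\mathrm{lcm}(N_0,\dots,N_{N-1})$ and $\widetilde N:=NK$; then $B_{\widetilde N}(jN+m)=(B^{(m,N)})_K(j)$ is the $K$-fold cocycle iterate of the dominating block map $B^{(m,N)}$, and since $K$ is a multiple of $N_m$, iterating condition (2) for $B^{(m,N)}$ yields $\|B_{\widetilde N}(jN+m)\vec u_m(j)\|>2\|B_{\widetilde N}(jN+m)\vec s_m(j)\|$; that is, $B$ dominates at step $\widetilde N$ with $\lambda=2$.

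So everything reduces to condition (1): the block gluing must be $B$-invariant, and a priori the invariance of the $E^{u/s}_m$ only controls products of $N$ consecutive factors. The key algebraic fact is the intertwining identity
$$
B^{(m+1,N)}(j)\,B(jN+m)=B(jN+m+N)\,B^{(m,N)}(j),
$$
which shows that $B(jN+m)$ carries any $B^{(m,N)}$-invariant line (at ``time'' $j$) onto a $B^{(m+1,N)}$-invariant line. Because $E^u_m(j)$ is never annihilated by any product of $B$'s (by Remark~\ref{r:domination_sequence} the multiplier $\lambda^+_{m,j}$ of $E^u_m$ under $B^{(m,N)}(j)$ satisfies $\inf_j|\lambda^+_{m,j}|>0$), the line $B(jN+m)[E^u_m(j)]$ is one-dimensional and $B^{(m+1,N)}$-invariant, and following the domination through the identity shows it exhibits the uniform expansion characteristic of an unstable bundle; hence it equals $E^u_{m+1}(j)$, i.e.\ $B(jN+m)[E^u(jN+m)]=E^u(jN+m+1)$. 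The analogous statement for $E^s$ holds at every position where $B(jN+m)$ does not annihilate $E^s_m(j)$; where it does, the gap condition forces the block map $B_N(\cdot)$ straddling that position to be singular with kernel equal to its stable line, so that $B(jN+m)[E^s(jN+m)]=\{0\}\subseteq E^s(jN+m+1)$. The cleanest way to bookkeep these two cases is to build $E^s$ on each block by backward pull-back, starting either from the kernel of the first singular factor to the right or, on a right tail of invertible factors, from $E^s_0$, and to check that this agrees with $E^s_m$ at block boundaries (again because a singular block forces its stable direction to be the relevant kernel). The main obstacle is exactly this invariance step: reconciling the $N$ a priori independent dominated splittings of the $B^{(m,N)}$ into a single $B$-invariant one forces an identification of forward images of $E^{u/s}_m$ with $E^{u/s}_{m+1}$ — which rests on uniqueness of the dominated splitting of an $\M(2,\C)$-sequence — together with a careful treatment of positions where $B$ is singular, so that a stable line gets annihilated partway through a block rather than at a block boundary.
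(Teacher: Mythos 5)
Your forward direction is exactly the paper's, and your backward direction has the same overall architecture as the paper's: glue the block bundles, verify conditions (2)--(4) by passing to a common multiple step, and reduce everything to the $B$-invariance of the glued bundles. The difficulty is that the step you yourself single out as ``the main obstacle'' is left unproved, and it is the entire content of the ``if'' direction. You justify $B(jN+m)[E^u_m(j)]=E^u_{m+1}(j)$ by appealing to ``uniqueness of the dominated splitting of an $\M(2,\C)$-sequence,'' but in this paper that uniqueness is not a prior result: it is recorded in Remark~\ref{r:equivalence_inv_directions} precisely as a \emph{consequence} of the proof of this lemma. So at the decisive moment you are invoking the statement you need to establish.

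Moreover, the shortcut you sketch does not obviously close. To apply any uniqueness statement one would need the pushed-forward \emph{pair} $\big(B(jN+m)E^u_m(j),\,B(jN+m)E^s_m(j)\big)$ to satisfy conditions (1)--(3) of Definition~\ref{d.uhsequence} for $B^{(m+1,N)}$; but by \eqref{eq:sphere_dist_vform} the distance between the two image lines carries a factor $|\det B(jN+m)|$, which has no uniform lower bound, so condition (3) can fail for the pushed-forward pair even though it holds for $(E^u_m,E^s_m)$. Likewise ``exhibits the uniform expansion characteristic of an unstable bundle'' is an assertion, not a verification: the intertwining identity gives $(B^{(m+1,N)})_k(j)\,B(jN+m)=B((j+k)N+m)\,(B^{(m,N)})_k(j)$, and the trailing factor $B((j+k)N+m)$ may shrink the unstable image by an amount not bounded below, so relative expansion of the image line over a complementary one is not immediate. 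What actually does the work in the paper is a quantitative contradiction: expanding a putative second stable vector as $a\vec u_m+b\vec s_m$ and showing that $a\neq 0$ forces $\|B_{KN}((k+p)N+m)\|\le C/\lambda$ for arbitrarily large $\lambda$ and suitable $p$, contradicting condition (4); the identification of the unstable directions requires a separate backward-in-time argument that first establishes invertibility of the relevant factors. Without an argument of this kind, your proof of condition (1) --- and hence of the ``if'' direction --- is incomplete. The remaining verifications in your write-up (condition (2) via $\widetilde N=NK$, condition (3), and condition (4)$'$ via $\|B_n(i)\|\ge M^{-(kN-n)}\|B_{kN}(i)\|$) are correct.
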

	\begin{proof}
		We start by showing the \textit{only if} part. Let $B \in \CD\CS$. Fix $N \in \N$ such that it satisfies condition (2) in definition ~\ref{d.uhsequence}. For simplicity, we define $A^{(m)}:\Z\rightarrow \mathrm{M}(2,\C)$ as 
		$$
		A^{(m)}(j)=B^{(m,N)}(j)=B_N(jN+m).
		$$ 
		By condition (4) of Definition~\ref{d.uhsequence}, it is clear that $\inf_{j\in\Z}\|A^{(m)}(j)\|>0$ for all $m=0,1,2,...,N-1$.

		For every $m=0,1,2,...,N-1$, define $E^{u}_{m}(j) := E^u(jN+m)$ and $E^{s}_{m}(j) := E^s(jN+m)$. Clearly $E^{s}_{m}$ and $E^{u}_{m}$ are $A^{(m)}$-invariant and they satisfy the conditions (3) of Definition~\ref{d.uhsequence}. Finally, with such choices of $E^s_m$ and $E^u_m$, by our definition of $A^{(m)}$, it is clear that $A^{(m)}$ satisfies condition (2) of Definition~\ref{d.uhsequence} at step one. With these subspaces, $A^{(m)}$ satisfies conditions (1)-(4) from the definition of $\CD\CS$. Hence, $A^{(m)}=B^{(m,N)} \in \CD \CS$.

		Now we show the \textit{if} part. Again, we define $A^{(m)}:\Z\rightarrow \mathrm{M}(2,\C)$ as 
		$$
		A^{(m)}(j)=B^{(m,N)}(j)=B_N(jN+m)
		$$
		and we have $A^{(m)} \in \CD \CS$ for all $m=0,1,2,...,N-1$. In particular, we may define $E^{u}_{m}(j)$ and $E^{s}_{m}(j)$ to be the invariant subspaces of $A^{(m)}$. Let $\vec s^{(m)}(j)$ and $\vec u^{(m)}(j)$ be unit vectors in $E^{s}_{m}(j)$ and $E^{u}_{m}(j)$, respectively. We may choose a $\delta>0$ and a $K\in\Z_+$ so that for all $0\le m<N$ and all $j\in\Z$, it holds that 
		\begin{align}
			\label{eq:sepsu_Am}&\|E^s_m(j)-E^u_m(j)\|_{\R\PP^1}>\delta \\
			\label{eq:domination_Am}&\|A^{(m)}_K(j)\vec u^{(m)}(j)\|>2\|A^{(m)}_K(j)\vec s^{(m)}(j)\|
		\end{align}
		
		For every $m \in \{0,1,2,...,N-1\}$, we can define a pair of invariant directions $E^{u,m}$ and $E^{s,m}$ for $B$ as follows. Fix any $j\in\Z$, we have $j=kN+m+\ell$ for some $k\in\Z$ and some $0\le \ell<N$. We define 
		$E^{u,m}(j)$ to be
		$$
		E^{u,m}(j): = B_\ell(j-\ell)E^u_m(k)
		$$
		which is always a one-dimesional subspace of $\C^2$. Otherwise, we will have $A^{m}(k)E^u_m(k)=\{\vec 0\}$ which contradicts the condition (2) of Definition~\ref{d.uhsequence}. We define $E^{s,m}(j)$ to be the one-dimensional subspace of $\C^2$ so that
		$$
		B_{N-\ell}(j)E^{s,m}(j)\subset E^s_m(k+1).
		$$
		That is, if  $B_{N-\ell}(j)$ is invertible, then $E^{s,m}(j)=B_{N-\ell}(j)^{-1}E^s_m(k+1)$; otherwise, $E^{s,m}(j)$ is the eigenspace of  $B_{N-\ell}(j)$ for the eigenvalue $0$.
		
		One readily checks that $E^{u,m}(j)$ and $E^{s,m}(j)$ are $B$-invariant for each $0\le m<N$. Moreover, $E^{s,m}(j)\neq E^{u,m}(j)$ for all $j\in\Z$. We now have $N$ pairs of invariant subspaces for $B(j)$.  We claim that the $N$ choices must be the same. To this end, we first note that it holds that for all $k\in\Z$ that
		$$
		E^{u,m}(kN+m)=E^u_m(k)\mbox{ and }E^{s,m}(kN+m)=E^s_m(k).
		$$
		We define $\vec s_m(j)\in E^{s,m}(j)$ and $\vec u_m(j)\in E^{u,m}(j)$ to be unit vectors. Fix any $j\in\Z$, we may assume $j=kN+m$ for some $k\in\Z$ and some $0\le m<N$. It then suffices to show that for all $0\le\ell<N$, $\ell\neq m$, we have
		$$
		E^{s,\ell}(j) =E^{s,m}(j)\mbox{ and }E^{u,\ell}(j)=E^{u,m}(j).
		$$
		We assume that $\ell>m$ since the proof of the case $\ell<m$ is completely analogous. We write $\vec s_\ell(j)=a\vec u_m(j)+b\vec s_m(j)$ and $\vec u_\ell(j)=c\vec u_m(j)+d\vec s_m(j)$ for some $a, b, c, d\in\C$. It is clear that $\vec u_m(j)\in E^u_m(k)$ and $\vec s_m(j)\in E^s_m(k)$. Recall that $E^u_m$ dominates $E^s_m$ under $A^{(m)}=B_N(N(\cdot)+m)$. Thus for all $\lambda>0$, we have for some large $n_1=pN\in\Z_+$ that 
		\beq\label{eq:samedirection1}
		\|B_{n_1}(j)\vec u_m(j)\|>\lambda\|B_{n_1}(j)\vec s_m(j)\|.
		\eeq
		Note we have $B_{n_1}(j)\vec u_m(j)\in E^u_m(p+k)$ and $B_{n_1}(j)\vec s_m(j)\in E^s_m(p+k)$ .
		
		On the other hand, $B_{\ell-m}(j)\vec s_\ell(j)\in E^s_\ell(k)$ and $B_{\ell-m}(j)\vec u_\ell(j)\in E^u_\ell(k)$. Thus for the same $\lambda>0$ above and by choosing $p$ appropriately, we can find a $n_2=n_1+\ell-m$ such that 
		\beq\label{eq:samedirection2}
		\|B_{n_2}(j)\vec s_\ell(j)\|<\frac1\lambda\|B_{n_2}(j)\vec u_\ell(j)\|
		\eeq
		For simplicity, we set $D=B_{\ell-m}(j+n_1)$.  \eqref{eq:samedirection2} implies that 
		\begin{align*}
			&|a|\|DB_{n_1}(j)\vec u_m(j)\|-|b|\|DB_{n_1}(j)\vec s_m(j)\|\\
			\le &\|DB_{n_1}(j)\vec s_\ell(j)\|\\
			=&\|B_{n_2}(j)\vec s_\ell(j)\|\\
			<&\frac1\lambda \|B_{n_2}(j)\vec u_\ell(j)\|\\
			=&\frac1\lambda \|DB_{n_1}(j)\vec u_\ell(j)\|\\
			\le & \frac{|c|}\lambda\|DB_{n_1}(j)\vec u_m(j)\|+\frac{|d|}\lambda\|DB_{n_1}(j)\vec s_m(j)\|
		\end{align*}
		If $a\neq 0$, then for large $\lambda$ the estimate above and \eqref{eq:samedirection1} imply that
		\begin{align*}
			\|DB_{n_1}(j)\vec u_m(j)\|&<\frac{2(|b|+|d|)}{|a|}\|DB_{n_1}(j)\vec s_m(j)\|\\
			&\le \frac{2(|b|+|d|)\|D\|}{|a|}\|B_{n_1}(j)\vec s_m(j)\|\\
			&\le \frac{2(|b|+|d|)\|D\|}{\lambda |a|}\|B_{n_1}(j)\vec u_m(j)\|
		\end{align*}
		which in turn implies for some $C_1>0$, the choice of which is independent of $\lambda$, that
		\beq\label{eq:samedirection3}
		\|B_{\ell-m}(j+n_1)\vec u^{(m)}(k+p)\|\le \frac{C_1}{\lambda}
		\eeq
		where $\vec u^{(m)}(k+p)$ is a unit vector in $E^u_m(p+k)$. Note that \eqref{eq:samedirection3} implies for some $C_2>0$, the choice of which is independent of $\lambda$, that
		$$
		\|B_{KN}((k+p)N+m)\vec u^{(m)}(k+p)\|\le \frac {C_2}{\lambda}
		$$
		which together with \eqref{eq:domination_Am} implies
		$$
		\|B_{KN}((k+p)N+m)\vec s^{(m)}(k+p)\|\le \frac {C_2}{2\lambda}
		$$
		where $\vec s^{(m)}(k+p)$ is an unit vector in $E^s_m(k+p)$.  Note \eqref{eq:sepsu_Am} implies that the angle between $\vec u^{(m)}(k+p)$ and $\vec u^{(m)}(k+p)$ is at least $\delta$ away from $0$ and $\pi$. Thus, the two estimates above clearly imply that 
		$$
		\|B_{KN}((k+p)N+m)\vec w\|\le \frac {C_3}{\lambda}
		$$
		for all unit vectors $\vec w\in\R^2$. Thus, we have 
		$$
		\|B_{KN}((k+p)N+m)\|=\|A^{(m)}_K(k+p)\|<\frac{C_3}\lambda
		$$
		where the choice of $C_3$ is again independent of $\lambda$. Since we can find such $p$ for all $\lambda>0$, it contradicts with condition (4) of Definition~\ref{d.uhsequence} for $A^{(m)}$. Thus we must have $a=0$ which implies that 
		$$
		\vec s_\ell(j)=e^{it}\vec s_m(j)\mbox{ for some }t\in\R. 
		$$
		
		Use the same strategy of proof, we can show that $d=0$ which implies that 
		$$
		\vec u_\ell(j)=e^{it}\vec u_m(j) \mbox{ for some }t\in\R. 
		$$
		Indeed, by $B$- invariance of $E^u_\ell$, we have $B_n(j-n)\vec u_\ell(j-n)$ is linearly dependent with $\vec u_\ell(j)$. If $d\neq 0$, then we must have that  $\vec u_\ell(j-n)=a_n\vec u_m(j-n)+b_n\vec s_m(j-n)$  where $b_n\neq 0$ and $B_n(j-n)s_m(j-n)\neq 0$ for all $n>0$. This implies that $B_n(j-n)$ is invertible for all $n>1$ which in turn implies that $B(j-n)$ is invertible for all $n>0$. Then by the strategy of showing $b=0$ and by going backwards in time, we can show $d=0$.
		
		Now, we just need to define $E^u$ and $E^s$ of $B$ to be $E^{s,m}$ and $E^{u,m}$ for any $0\le m<N$, respectively. They satisfy condition (1) of Definition~\ref{d.uhsequence} since they are $B$-invariant. On the other hand, for any $j\in\Z$, we have $j=kN+m$ for some $k\in\Z$ and $0\le m<N$. Thus $E^s(j)=E^s_m(k)$ and $E^u(j)=E^u_m(k)$ which implies the following two things. First,  by \eqref{eq:sepsu_Am}, we have $d(E^s(j),E^u(j))>\delta$ for all $j\in\Z$. In other words, $E^s$ and $E^u$ satisfy condition (3) of Definition~\ref{d.uhsequence}. Second, by \eqref{eq:domination_Am}, it holds for all $j\in\Z$ that
		$$
		\|B_{KN}(j)\vec u(j)\|>2\|B_{KN}(j)\vec s(j)\|
		$$
		which is the condition (2) of Definition~\ref{d.uhsequence}. Finally, condition (4) of Definition~\ref{d.uhsequence} for $A^{(m)}$, $m=0,\ldots, N-1$, imply that
		$$
		\inf_{j\in\Z}\|B_{KN}(j)\|=\inf_{0\le m<N, j\in\Z}\|A^{(m)}(j)\|>0
		$$ 
		In other words, $B$ satisfies condtion (4) of Definition~\ref{d.uhsequence}. Hence,  $B\in \CD \CS$ where the corresponding invariant spaces are $E^u$ and $E^s$. 
	\end{proof}
	\begin{remark}\label{r:equivalence_inv_directions}
		The proof of the \textit{if} part of Lemma~\ref{l.B_BN_equiv} in particular implies the uniqueness of invariant directions appear in Definition~\ref{d.uhsequence}. In other words, suppose $B\in\CD\CS$ and $E^s$ and $E^u$ are the corresponding invariant directions. Suppose $F^s$ and $F^u$ form another pair of invariant directions of $B$ that satisfy conditions (1)-(3), then we must have $E^s(j)=F^s(j)$ and $E^u(j)=F^u(j)$ for all $j\in\Z$. Moreover, the proof of $E^s(j)=F^s(j)$ only involves $B(n)$ for all $n\ge j$. Similarly, the proof of $E^u(j)=F^u(j)$ only involves $B(n)$ for all $n<j$.
		\end{remark}
	
	\begin{lemma} 
		\label{l.closenessofdir}
		Let $\Lambda = \left(\begin{smallmatrix}\lambda^+ &0\\ 0 &\lambda^-\end{smallmatrix}\right)\in \mathrm{M}(2,\C)$ be such that 
 $|\lambda^+|> \gamma>0$ and $\left|\lambda^+\right|>2|\lambda^-|$.
Fix any $r>0$. Then there exist $c=c(\gamma, r)>0$ and $C=C(\gamma,r)>0$ so that if  $\widetilde\Lambda:\Z\to\M(2,\R)$ satisfies $\|\widetilde{\Lambda}-\Lambda\|\le c$, then
			\beq \label{eq:constraction_persists}
			\sup_{z\in\D_r}|\Lambda\cdot z-\widetilde{\Lambda}\cdot z| < C\|\widetilde\Lambda-\Lambda\|.
			\eeq
	\end{lemma}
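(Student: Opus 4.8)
The plan is a short, direct computation with the M\"obius formula \eqref{eq:inducedProj2}, together with a choice of $c$ that keeps the relevant denominators under control. Write $\e := \|\widetilde\Lambda - \Lambda\|$ and record the entries of the perturbation as
\[
\widetilde\Lambda - \Lambda = \begin{pmatrix} a & b\\ c & d\end{pmatrix},
\]
so that each of $|a|,|b|,|c|,|d|$ is at most $\e$ (an entry of a matrix is bounded by its operator norm). By \eqref{eq:inducedProj2}, $\Lambda\cdot z = \tfrac{\lambda^-}{\lambda^+}\,z$ and $\widetilde\Lambda\cdot z = \tfrac{c + (\lambda^-+d)z}{(\lambda^++a) + bz}$.

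First I would fix the threshold $c = c(\gamma,r) := \tfrac{\gamma}{2(1+r)}$ and check that, for $\e\le c$ and $z\in\D_r$, the denominator of $\widetilde\Lambda\cdot z$ obeys $|(\lambda^++a)+bz| \ge |\lambda^+| - \e(1+r) \ge \tfrac{1}{2}|\lambda^+| > 0$; hence $\widetilde\Lambda\cdot z$ is a well-defined finite point of $\C$ for every $z\in\D_r$ and the left-hand side of \eqref{eq:constraction_persists} is meaningful. Next I would put $\Lambda\cdot z - \widetilde\Lambda\cdot z$ over the common denominator $\lambda^+\big((\lambda^++a)+bz\big)$: the two $\lambda^+\lambda^- z$ terms cancel and the numerator collapses to $-\lambda^+ c + (\lambda^- a - \lambda^+ d)z + \lambda^- b z^2$. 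Using $|\lambda^-|\le\tfrac{1}{2}|\lambda^+|$, $|z|<r$, and $|a|,|b|,|c|,|d|\le\e$, this numerator has modulus at most $|\lambda^+|\,\e\,\big(1+\tfrac{3}{2} r + \tfrac{1}{2} r^2\big) = \tfrac{1}{2}|\lambda^+|\,\e\,(r+1)(r+2)$, while the denominator has modulus at least $\tfrac{1}{2}|\lambda^+|^2$. Dividing and using $|\lambda^+|>\gamma$ yields
\[
\sup_{z\in\D_r}\big|\Lambda\cdot z - \widetilde\Lambda\cdot z\big| \;\le\; \frac{(r+1)(r+2)}{\gamma}\,\e ,
\]
so \eqref{eq:constraction_persists} holds with $C = C(\gamma,r) := \tfrac{2(r+1)(r+2)}{\gamma}$, the extra factor $2$ turning the ``$\le$'' we actually obtain into the strict inequality of \eqref{eq:constraction_persists} whenever $\widetilde\Lambda\neq\Lambda$.

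I do not expect any genuine obstacle; the argument is elementary algebra. The two points that do require attention are: (i) choosing $c$ small enough, in terms of $\gamma$ and $r$ alone, that $(\lambda^++a)+bz$ stays uniformly bounded away from $0$ on $\D_r$ — this is precisely what makes the supremum finite and controllable; and (ii) verifying that the resulting bound depends only on $\gamma$ and $r$ and not on any upper bound for $|\lambda^+|$. The latter is why the cancellation of the $\lambda^+\lambda^- z$ terms matters: after it, $|\lambda^+|$ occurs to the same total degree in numerator and denominator and drops out, so only the lower bound $|\lambda^+|>\gamma$ is ever used.
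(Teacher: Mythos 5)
Your proof is correct and takes essentially the same route as the paper's: both fix the same threshold $c=\gamma/(2(1+r))$ to keep the denominator of $\widetilde\Lambda\cdot z$ bounded below by $\tfrac12|\lambda^+|\ge\tfrac\gamma2$ on $\D_r$, and then bound the difference of the two M\"obius maps by an elementary estimate using only $|\lambda^+|>\gamma$ and $|\lambda^-|\le\tfrac12|\lambda^+|$ (the paper splits via the triangle inequality through $\lambda^-z/(a+bz)$ rather than clearing to a common denominator, arriving at the comparable constant $\tfrac1\gamma(1+r)(2+r)$). The only cosmetic difference is your doubling of $C$ to force strictness, which is unnecessary for $\e>0$ since $|\lambda^+|>\gamma$ is already strict.
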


	\begin{proof}
		Let $\tilde{\Lambda} = \left(\begin{smallmatrix}a &b\\ c &d\end{smallmatrix}\right)$ and let $\delta=\|\widetilde{\Lambda}-\Lambda\|$. Then, $|\lambda^+ - a|\le  \delta$, $|\lambda^- - d|\le   \delta$, $|b|\le   \delta$ and $|c|\le  \delta$ which in turn imply that
	\begin{align*}|c +d z  - \lambda^-z |&\le |c|+|z| |d-\l^-|\le \delta(1+|z|) \mbox{ and }\\
		|a +b z  - \lambda^+|&\le |a-\l^+|+|b||z| \le \delta(1+|z |).
		\end{align*}
	In particular, we have 
	$$
	|a+bz|\ge \l^+-\delta(1+|z|)\ge \gamma-\delta(1+r)\ge \frac\gamma2
	$$
	provided $\delta<\frac\gamma{2(1+r)}$. Thus, for such $\delta$, we have for all $z\in\D_r$ that
	\begin{align*}
		|\Lambda\cdot z -\widetilde{\Lambda}\cdot z |
		&\le \left| \frac{\lambda^-z }{\lambda^+} - \frac{c +d z }{a +b z }\right|\\
		&\le \left| \frac{\lambda^-z }{\lambda^+} - \frac{\lambda^-z }{a +b z }\right|+\left| \frac{\lambda^-z }{a +b z } - \frac{c +d z }{a +b z }\right|\\
		&= \left | \frac{\lambda^-z }{\lambda^+(a +b z )}\right ||a +b z -\lambda^+|+\left| \frac{1}{a +b z }\right||\lambda^-z -(c +d z )|\\
		&\le \frac{1}{|a +b z |}\delta (1+|z|) \bigg(\left|\frac{\lambda^-_j}{\lambda^+_j}z \right|+1\bigg)\\
		&\le \frac1\gamma (1+r)(2+r)\delta\\
		&=\frac1\gamma (1+r)(2+r)\|\Lambda-\widetilde\Lambda\|.
	\end{align*}
In other words, \eqref{eq:constraction_persists} holds true with $c=\frac{\gamma}{2(1+r)}$ and $C=\frac1\gamma (1+r)(2+r)$.
\end{proof}

	We are now ready to show that our definition of the dominated splitting for $\mathrm{M}(2,\C)$-sequences is a stable property under the $\|\cdot \|_\infty$- perturbation.

	\begin{proof}[Proof of Theorem~\ref{l.opennessDS}]
		
		Let $B \in \CD\CS$. Let $E^u$ and $E^s$ be its invariant spaces. By condition (2) of Definition \ref{d.uhsequence}, there exists $N \in \Z_{+}$ such that $\|B_{N}(j)\vec u(j)\|>2\|B_N(j)\vec s(j)\|$ for all $j\in\Z$, and all unit vectors $\vec u(j)\in E^u(j),\vec s(j)\in E^s(j)$. By Lemma~\ref{l.B_BN_equiv}, we may assume $N=1$ with the understanding that $m_1=\inf_j\|B(j)\|$ appear below is essentially $m_N=\inf_j\|B_N(j)\|$. We define
		\beq \label{eq:decompose}
		\Lambda(j):=D(j+1)^{-1}B(j)D(j)
		\eeq
		where $D(j)$ is defined as 
		$$
		D(j) = \big(\vec u(j),\ \vec s(j)\big).
		$$
		By invariance of $E^u$ and $E^s$, it is clear that $\Lambda:\Z\to \mathrm{M}(2,\C)$ is diagonal, i.e. 
		$$
		\Lambda(j)=\begin{pmatrix} \lambda_j^+ & 0\\\ 0 &\lambda_j^-\end{pmatrix}.
		$$  
		Moreover, since $E^u$ dominates $E^s$ at step $1$, we must have 
		$$
		\left|\lambda_j^+\right|>2\left|\lambda_j^-\right| \mbox{ for all }j \in \Z .
		$$
		By condition (4) of Definition \ref{d.uhsequence}, 
		$$ 
        \inf_{j\in\Z}d(E^ u(j),E^s(j))>\delta>0
		$$ 
		which implies
		$$
		\inf_{j\in\Z}|\det D(j)| = d(E^ u(j),E^s(j))>\delta>0.
		$$ 
		Thus, there exist $c, C$, depending on $\delta$ only, such that 
		\beq\label{eq:boundnormD2}
		c<\|D(j)^{\pm}\|<C \mbox{ for all } j\in \Z
		\eeq
		By condition (3) of Definition \ref{d.uhsequence}, $m_1<\|B(j)\|<M$ for all $j \in \Z$.  Hence, we must have 
\beq\label{eq:boundnormLambda}
		c_1<\|\Lambda(j)\|<C_1\mbox{ for all }j \in \Z,
		\eeq
		where $c_1$ and $C_1$ depend on $m_1, M$ and $\delta$. Since $\left|\lambda_j^+\right|>2|\lambda_j^-|$ for all $j \in \Z$, there must exist a $\gamma=\gamma(m_1,M,\delta)>0$ such that $|\lambda_j^+ |>\gamma$ for all $j \in \Z$. Moreover, for any $\a>0$, we have 
		\beq\label{eq:closelambda}
\Lambda(j)\cdot (\D_\a)\subset \D_{\frac\a2}\mbox{ for all } j\in \Z.
		\eeq
	
		For $\widetilde B:\Z\to\mathrm{M}(2,\C)$, we define 
		\beq\label{eq:tildelambda}
		\widetilde{\Lambda}(j):=D(j+1)^{-1}\widetilde{B}(j)D(j).
		\eeq
		Then by \eqref{eq:boundnormD2}, we have for some $C=C(\gamma, \a)>0$ that
        $$
		\|\widetilde{\Lambda}(j)-\Lambda(j)\|< C\|\widetilde B-B\|.
		$$
		Hence, if $\|\widetilde B-B\|$ small, we have the following two properties. First, by \eqref{eq:boundnormLambda}, we have 
		\beq\label{eq:lowerbdnorm_tildeLambda}
		\inf_{j\in\Z}\|\widetilde\Lambda(j)\|>c.
		\eeq
		Second, by \eqref{eq:constraction_persists} of Lemma~\ref{l.closenessofdir}, we have 
		$$
		\sup_{z\in\D_\alpha}|\Lambda(j)\cdot z-\widetilde{\Lambda}(j)\cdot z| < C\|\widetilde\Lambda(j)-\Lambda(j)\|<C^2\|\widetilde B(j)-B(j)\|.
		$$
Thus for all $\frac\a2<\a'<\a$, there exists a $\e=\e(\delta,\a,\a',\gamma)>0$ such that if $\|\widetilde B-B\|<\e$, then
		\beq \label{eq:closureoftl}
	\widetilde{\Lambda}(j)\cdot \D_\alpha \subset \D_{\a'} \mbox{ for all } j\in \Z.
		\eeq
\eqref{eq:lowerbdnorm_tildeLambda} and \eqref{eq:closureoftl} imply that $\widetilde\Lambda:\Z\to\M(2,\C)$ satisfies all the conditions of  Theorem~\ref{l:invConeFieldtoDS}. Thus, we have $\widetilde\Lambda\in \CD\CS$ with the corresponding constants $\delta(\widetilde\Lambda)$ and $N(\widetilde\Lambda)$ depend only on $\e$, hence only on $m_{N(B)}$, $M$, $\delta(B)$, $\a$ and $\a'$. However, the choice of $\a$ and $\a'$ are independent of $B$.

Since $\widetilde\Lambda$ is conjugate to $\widetilde B$ via $D$, one readily check that $\widetilde B\in\CD\CS$. Indeed, condition (4) of Definition~\ref{d.uhsequence} for $\widetilde\Lambda$ clearly implies $\widetilde B$ satisfies condition (4) as well. Moreover, if $\tilde E^u$ and $\tilde E^s$ are the invariant spaces of $\widetilde \Lambda$, then it is straightforward to see that $\bar E^u(j)=D(j)\cdot E^u(j)$ and $\bar E^s(j)=D(j)\cdot E^s(j)$  are the invariant spaces of $\widetilde B$ which meet all other conditions of Definition~\ref{d.uhsequence}. Finally, since $\a$ and $\a'$ can be chosen independent of $B$ and since constants associated with $D$ depend only on $\delta$, one readily checks that the dependence of the constants $\delta(\widetilde B)$ and $N(\widetilde B)$ depend on $B$ only through $\delta(B)$, $N(B)$, $m_{N(B)}$, and $M$.
\end{proof}

	\section{Domination Implies Invertibility of the Operator}\label{ss:uh_to_resolvent}
	
In this section, we show that 
	$$
	\{E\in\C:B^{E} \in \CD\CS\ \}\subset\rho(J_{a,b}).
	$$
	Consider the spectral equation
	\beq\label{eq:SpecEq}
	J_{a,b}\psi=E\psi,
	\eeq
	where $E\in\C$ is the \emph{energy parameter}. A direct computation shows that if $\psi\in\C^\Z$ solves equation~\eqref{eq:SpecEq}, then
	\beq\label{eq:SchrCocycle1}
	B^{E}(j)\binom{\psi(j)}{\psi(j-1)}=a_j\binom{\psi(j+1)}{\psi(j)}\mbox{ for all } j\in\Z,
	\eeq
	where $B^{E}:\Z\to \mathrm{M}(2,\R)$ is called the \emph{Jacobi cocycle map} and is defined as
	\beq\label{eq:SchrCocylce2}
	B^{E}(j)=\begin{pmatrix}E-b_j &-\overline{a_{j-1}}\\ a_j &0\end{pmatrix}.
	\eeq
	
	In case $a_j\neq 0$, we define $A^E(j)=\frac1{a_j}B^E(j)$. By \eqref{eq:SchrCocycle1},  $(J_{a,b}\psi)_j=(E\psi)_j$ is equivalent to
	\beq\label{eq:n_step_transfer0}
	A^{E}(j)\binom{\psi(j)}{\psi(j-1)}= \binom{\psi(j+1)}{\psi(j)}.
	\eeq
	In other words, $A^E(j)$ becomes the transfer matrix of the operator~\eqref{eq:operators2} when it exists. First, we have the following lemma.
	
	\begin{lemma}\label{l.expgrowth}
		Suppose $E \in \C$ is such that $B^E \in \CD\CS$ with $E^u$ and $E^s$ being its invariant directions. Then there exists a $\l_0>1$ so that the following holds true.
		\begin{enumerate}
			\item
			Suppose there is $j_0\in\Z$ such that $a_j\neq 0$ for all $j>j_0$. Then for each $j>j_0$, there exists a $k=k(j)>0$ such that it holds for all unit vectors $\vec u(j) \in E^u(j)$ that
			\beq\label{eq:expgrowth+}
			\|A^E_{n}(j)\vec u(j) \| \ge k \lambda_0^{n} \mbox{ for all } n\ge 1,
			\eeq
			\item Suppose there is a $j_0\in\Z$ such that $a_j\neq 0$ for all $j<j_0$. Then, for all $j \le j_0$, there exists $k=k(j)>0$ such that it holds for all unit vectors $\vec s(j) \in E^s(j)$ that
			\beq\label{eq:expgrowth-}
			\|A^E_{-n}(j)\vec s(j) \| \ge k \lambda_0^{n} \mbox{ for all } n\ge1.
			\eeq
		\end{enumerate}
	\end{lemma}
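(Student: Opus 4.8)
The plan is to compute exactly how the transfer cocycle $A^E$ acts along the invariant lines $E^u$ and $E^s$, and then to extract exponential growth from the $N$-step domination inequality, treating the sites where $a_j=0$ as pure boundary effects. \emph{Set-up.} Fix unit vectors $\vec u(j)\in E^u(j)$, $\vec s(j)\in E^s(j)$ and put $D(j)=(\vec u(j),\vec s(j))\in\mathrm{M}(2,\C)$. By condition (3) of Definition~\ref{d.uhsequence}, by \eqref{eq:sphere_dist_vform2}, and by Hadamard's inequality, $\frac\delta2<|\det D(j)|\le 1$ for all $j$. $B^E$-invariance of the splitting forces $\Lambda(j):=D(j+1)^{-1}B^E(j)D(j)=\mathrm{diag}(\lambda_j^+,\lambda_j^-)$ to be diagonal, i.e.\ $B^E(j)\vec u(j)=\lambda_j^+\vec u(j+1)$ and $B^E(j)\vec s(j)=\lambda_j^-\vec s(j+1)$. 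Condition (2) gives $\|B^E_N(j)\vec u(j)\|>2\|B^E_N(j)\vec s(j)\|\ge 0$, hence $\|B^E_N(j)\vec u(j)\|>0$; since $B^E_N(j)\vec u(j)=\big(\prod_{k=j}^{j+N-1}\lambda_k^+\big)\vec u(j+N)$, no partial product of the $\lambda_k^+$ over a block can vanish, so in fact $\lambda_k^+\neq0$ for every $k\in\Z$. Where $a_j\neq0$ we have $A^E(j)=a_j^{-1}B^E(j)$, so $A^E(j)\vec u(j)=\frac{\lambda_j^+}{a_j}\vec u(j+1)$, and therefore, as long as $a_j,\dots,a_{j+n-1}$ are all nonzero,
\beq\label{eq:uexpprod}
\|A^E_n(j)\vec u(j)\|=\prod_{k=j}^{j+n-1}\frac{|\lambda_k^+|}{|a_k|};
\eeq
in case~(1) every $a_k$ with $k\ge j>j_0$ is nonzero, so \eqref{eq:uexpprod} holds for all $n\ge1$.

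\emph{The block estimate.} Fix $j>j_0$ and group the factors in \eqref{eq:uexpprod} into blocks of length $N$ (the $N$ of Definition~\ref{d.uhsequence}) starting at $j$; write $P_i^{\pm}=\prod_{k=j+iN}^{j+(i+1)N-1}\lambda_k^{\pm}$. Condition (2) at $j+iN$ gives $|P_i^+|>2|P_i^-|$. Taking determinants in $\Lambda(k)=D(k+1)^{-1}B^E(k)D(k)$, telescoping over the block, and using $|\det B^E(k)|=|a_{k-1}||a_k|$, one gets
$$
|P_i^+P_i^-|=\frac{|\det D(j+iN)|}{|\det D(j+(i+1)N)|}\prod_{k=j+iN}^{j+(i+1)N-1}|a_{k-1}||a_k|,
$$
hence $|P_i^+|^2>2|P_i^+P_i^-|$. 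Multiplying these $m$ inequalities (the $\det D$ factors telescope, and the $a$'s collapse to $\frac{|a_{j-1}|}{|a_{j+mN-1}|}\prod_{k=j}^{j+mN-1}|a_k|^2$), then dividing by $\prod_{k=j}^{j+mN-1}|a_k|$, formula \eqref{eq:uexpprod} yields
$$
\|A^E_{mN}(j)\vec u(j)\|>2^{m/2}\sqrt{\frac{|\det D(j)|}{|\det D(j+mN)|}}\sqrt{\frac{|a_{j-1}|}{|a_{j+mN-1}|}}\ \ge\ 2^{m/2}\sqrt{\frac{\delta\,|a_{j-1}|}{2M}},
$$
where I used $|\det D(j+mN)|\le1$, $|\det D(j)|>\frac\delta2$, and $|a_{j+mN-1}|\le M$. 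Thus, with $\lambda_0:=2^{1/(2N)}>1$, this is \eqref{eq:expgrowth+} along the subsequence $n=mN$, provided $a_{j-1}\neq0$.

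\emph{Loose ends.} (a) For general $n=mN+r$ with $0\le r<N$, going from $mN$ to $n$ multiplies \eqref{eq:uexpprod} by $\prod_{k=j+mN}^{j+n-1}\frac{|\lambda_k^+|}{|a_k|}$. A full block product $\prod_{k=l}^{l+N-1}|\lambda_k^+|=\|B^E_N(l)\vec u(l)\|$ is bounded below by a uniform positive constant: conditions (2) and (3) (uniform domination and uniform separation) give $\|B^E_N(l)\vec u(l)\|\ge c\,\|B^E_N(l)\|$ for a fixed $c=c(\delta)>0$, and condition (4) gives $\inf_l\|B^E_N(l)\|>0$; the complementary at most $N$ factors are each $\le M$, and $|a_k|\le M$. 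Hence $\prod_{k=j+mN}^{j+n-1}\frac{|\lambda_k^+|}{|a_k|}\ge c_1>0$ uniformly, and absorbing $c_1$ and one power of $\lambda_0$ into $k(j)$ gives \eqref{eq:expgrowth+} for all $n\ge1$ (the same estimate also covers $n<N$). (b) If $a_{j-1}=0$ --- which in case~(1) can happen only when $j-1$ is the threshold $j_0$ --- then $a_{(j+1)-1}=a_j\neq0$, so the above gives \eqref{eq:expgrowth+} at $j+1$, and from $A^E_n(j)\vec u(j)=\frac{\lambda_j^+}{a_j}A^E_{n-1}(j+1)\vec u(j+1)$ with $\frac{\lambda_j^+}{a_j}\neq0$ one transfers the bound back to $j$. (c) Part~(2) is the mirror image: for $k<j_0$ the matrices $A^E(k)$ are invertible and $\lambda_k^-\neq0$, with $A^E(k)^{-1}\vec s(k+1)=\frac{a_k}{\lambda_k^-}\vec s(k)$, so $\|A^E_{-n}(j)\vec s(j)\|=\prod_{k=j-n}^{j-1}\frac{|a_k|}{|\lambda_k^-|}$; dividing the block inequality by $|P_i^+P_i^-|$ to bound $|P_i^-|^{-1}$ from below and running the identical telescoping argument --- now every index occurring is $<j_0$, so no boundary shift is needed --- gives \eqref{eq:expgrowth-} with the same $\lambda_0=2^{1/(2N)}$.

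The genuinely routine computations are routine; the real care goes into the bookkeeping around the singular sites --- that $A^E$ is defined only where $a_j\neq0$, that $\lambda_j^+$ (resp.\ $\lambda_j^-$ in part (2)) never vanishes where it is used, and that the determinant identity drags in the neighbouring coefficient $a_{j-1}$, which is exactly what forces the one-step shift in~(b).
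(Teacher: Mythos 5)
Your proof is correct and follows essentially the same route as the paper's: both arguments combine the domination inequality with the determinant identity $|\det B^E_n(j)|=\prod_{i=j}^{j+n-1}|a_i||a_{i-1}|$ (in your version, telescoped through $\det D$ and the diagonal conjugate $\Lambda$), arrive at the same lower bound carrying the stray factor $|a_{j-1}|$, and dispose of the boundary site $j=j_0+1$ by the same one-step shift along the invariant direction. The only structural difference is that the paper simply invokes the $n$-step form of domination, $\|B^E_{n}(j)\vec u(j)\|>c\lambda^n\|B^E_{n}(j)\vec s(j)\|$ for all $n$, whereas you rederive it from the $N$-step condition via block products; this makes your write-up more self-contained but obliges you to handle the remainders $n=mN+r$.

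The one place where ``the identical argument'' does not literally transfer is that remainder estimate in part (2). There you need a uniform lower bound on the partial-block product $\prod_k |a_k|/|\lambda_k^-|$ over $r<N$ consecutive indices $k$ near $j-n$; unlike in part (1), the coefficient $|a_k|$ now sits in the numerator and has no uniform lower bound, so the bound from your step (a) does not apply verbatim. It is repaired in one line from the determinant identity you already established: $|\lambda_k^+\lambda_k^-|=\frac{|\det D(k)|}{|\det D(k+1)|}\,|a_k||a_{k-1}|$ gives $\frac{|a_k|}{|\lambda_k^-|}=\frac{|\det D(k+1)|}{|\det D(k)|}\cdot\frac{|\lambda_k^+|}{|a_{k-1}|}\ge \frac{\delta}{2M}\,|\lambda_k^+|$, and the uniform lower bound on partial products of the $|\lambda_k^+|$ from step (a) then finishes it. With that line added the proof is complete.
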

	
	\begin{proof}
		By conditions (1)-(2) of Definition~\ref{d.uhsequence}, $B^E \in \CD\CS$ guarantees that there exist $c>0$ and $\l>1$ such that  
		\beq\label{eq:sl2rb}\|B^E_{n}(j)\vec u(j)\|>c\lambda^n \|B^E_{n}(j)\vec s(j)\| \eeq
		for all $j\in\Z$ and all unit vectors $\vec u(j)\in E^u(j)$ and $\vec s(j)\in E^s(j)$. 
		By condition (3) of Definition~\ref{d.uhsequence}, there exists $\delta>0$ such that $|\det \begin{pmatrix} \vec u(j) ,\vec s(j)\end{pmatrix}|=d(E^u(j),E^s(j)) >\delta>0$ for all $j \in \Z$. 
		Hence, we have
		\begin{align*}
			\delta\big| \prod_{i = j}^{j+n-1} a_{i}\overline{a_{i-1}} \big|&\le |\det (B^E_{n}(j)|d(E^u(j),E^s(j))\\
			&=|\det (B^E_{n}(j))|\cdot|\det\begin{pmatrix} \vec u(j), \vec s(j)\end{pmatrix}|\\
			&=|\det (B^E_{n}(j)\begin{pmatrix} \vec u(j), \vec s(j)\end{pmatrix})|\\
			&= |\det  \begin{pmatrix} B^E_{n}(j)\vec u(j), B^E_{n}(j)\vec s(j)\end{pmatrix} |\\
			&= \|B^E_{n}(j)\vec u(j) \| \cdot\|B^E_{n}(j)\vec s(j) \| \cdot|\det (\vec u(j+n), \vec s(j+n))|\\
		&=\|B^E_{n}(j)\vec u(j) \| \cdot\|B^E_{n}(j)\vec s(j) \| \cdot d\big(E^u(j+n), E^s(j+n)\big)\\
		&\le \delta \|B^E_{n}(j)\vec u(j) \| \cdot\|B^E_{n}(j)\vec s(j) \| .
	\end{align*}
	Combining the estimate above and \eqref{eq:sl2rb}, we have for some $\tilde c>0$, the choice of which is independent of $j$, that 
		\beq \label{eq:expgro}
		\left| \widetilde{c} \prod_{i = j}^{j+n-1} a_{i}\overline{a_{i-1}}\right|   \le  \|B^E_{n}(j)\vec u(j) \|^2  \lambda^{-n}\mbox{ for all }j\in\Z\mbox{ and all } n\ge 1.
		\eeq
		If $a_j \not =0$ for all $j > j_0$, then by \eqref{eq:expgro} it holds  for all $j>j_0+1$ that
		$$
		\|A^E_{n}(j)\vec u(j) \|^2 \ge \widetilde{c} \lambda^{n} \left| \frac{a_{j-1}}{a_{j+n-1}} \right| \ge \widetilde{c_1} \lambda^{n} |a_{j-1}| \mbox{ for all } n\ge 1
		$$
		where the last inequality comes from the fact that $a\in\ell^\infty(\Z)$. It clearly implies \eqref{eq:expgrowth+} with $\l_0=\sqrt{\l}$ when $j>j_0+1$. No matter $a_{j_0}=0$ or not, the estimate above can then be extended to $j_0+1$ since
		$$
		A^E(j_0+1)E^u(j_0+1)=E^u(j_0+2).
		$$
		
		Similarly,  suppose for some $j$, $B^E_{-n}(j)$ is well-defined for all $n\ge 1$. Then $B^E \in \CD\CS$ guarantees that there exists a $c>0$, the choice of which is independent of $j$, such that  
		$$ 
		\|B^E_{-n}(j)\vec s(j)\|>c\lambda^n \|B^E_{-n}(j)\vec u(j)\| 
		$$
		for  all unit vectors $\vec u(j)\in E^u(j)$ and $\vec s(j)\in E^s(j)$. Following the same logic as above, we get that 
		$$
		\left| \frac{\widetilde{c}}{ \prod_{i = j-n}^{j-1} a_{i}\overline{a_{i-1}}}\right| \le \|B^E_{-n}(j)\vec u(j) \| \|B^E_{-n}(j)\vec s(j) \| \le \|B^E_{-n}(j)\vec s(j) \|^2  \lambda^{-n}
		$$
	   which in turn implies for all $n\ge 1$ 
		$$
		\|A^E_{-n}(j)\vec s(j) \|^2 \ge \widetilde{c} \lambda^{n} \left| \frac{a_{j-1}}{a_{j-n-1}} \right| \ge \widetilde{c_1} \lambda^{n} |a_{j-1}|.
		$$
		If $a_j\neq 0$ for all $j<j_0$, then $B^E_{-n}(j)$ is well-defined for all $j\le j_0$ and for all $n\ge 1$. Thus, the estimate above clearly implies \eqref{eq:expgrowth-} with $\l_0=\sqrt{\l}$ for all $j\le j_0$. 
	\end{proof}

For $j_2>j_1$, we define $J_{(j_1,j_2]}$ to be the restriction of $J_{a,b}$ to the subspaces $\ell^2(j_1, j_2]$ with Dirichlet boundary conditions. That is $j_{(j_1,j_1+1]}=b_{j_1+1}$; for $j_2>j_1+1$, we have
\beq\label{eq:J_restriction1}
(J_{ (j_1,j_2]}\phi)(j) = 
\begin{cases}a_{j_1+1}\phi(j_1+2)+b_{j_1+1}\phi(j_1+1), & j=j_1+1,\\
	(J_{a,b}\phi)(j), & j_1+1<j< j_2 \mbox{ if }j_2>j_1+2,\\
	\overline{a_{j_2-1}}\phi(j_2-1)+b_{j_2}\phi(j_2), & j=j_2.\end{cases}
\eeq
We also denote restrictions of $J_{a,b}$ on half lines $(j_0,+\infty)$ and $(-\infty, j_0]$ with Dirichlet boundary conditions by  $J_{(j_0,+)}$ and $J_{(-,j_0]}$, respectively. In other words, we have
\beq\label{eq:J_restriction2}
(J_{ (j_0,+)} \phi)(j)= \begin{cases}a_{j_0+1}\phi(j_0+2)+b_{j_0+1}\phi(j_0+1), & j=j_0+1,\\
	(J_{a,b}\phi)(j), & j>j_0+1,\end{cases}
\eeq

\beq\label{eq:J-restriction3}
(J_{(-,j_0]}\phi)(j) = \begin{cases}\overline{a_{j_0-1}}\phi(j_0-1)+b_{j_0}\phi(j_0), & j=j_0,\\
	(J_{a,b}\phi)(j), & j<j_0.\end{cases}
\eeq
Note for all those restrictions, we write the intervals of integers to be half open without the left end point. They may certainly be written as other type of  intervals. We define $p_N(j, E)=\det (E - J_{[j,j+N)})$ for $N\ge 1$, $p_0(j,E)=1$, and $p_{-1}(j,E)=0$. Then it is a standard result that the following is true for all $j\in\Z$ and all $N\ge 1$:
$$
B_N^E(j) = \begin{pmatrix}
	p_N(j,E) & -\overline{a_{j-1}}p_{N-1}(j+1,E) \\
	a_{j+N-1}p_{N-1}(j,E) & -\overline{a_{j-1}}a_{j+N-1}p_{N-2}(j+1,E) 
\end{pmatrix}.
$$
	
	\begin{lemma}\label{l.trivial}
		Suppose $E \in \C$ is such that $B^E \in \CD\CS$ with $E^u$, $E^s$ being its invariant directions. We consider the following restrictions of $J_{a,b}$:
	\begin{enumerate}
	\item There is $j_0\in\Z$ such that $a_{j_0}=0$ and $a_{j_0}\neq 0$ for all $j>j_0$. Then we consider $J_{(j_0,+)}$.
	\item There is $j_0\in\Z$ such that $a_{j_0}=0$ and $a_{j_0}\neq 0$ for all $j<j_0$. Then we consider $J_{(-,j_0]}$.
	\item For all $j\in\Z$, $a_j\neq 0$. Then we consider the whole line operator $J_{a,b}$.
	\item There exist $j_1<j_2$ such that $a_{j_1}=a_{j_2}=0$. Then we consider $J_{(j_1,j_2]}$.
	\end{enumerate}
	In cases (1) and (2), the solution space of $J_{(j_0,+)} \phi = E \phi$ or $J_{(-,j_0]}\phi=E\phi$ is one-dimensional. Moreover, any of their nontrivial solutions grows exponentially along some subsequences that go to $\pm\infty$, respectively. In case (3), the solution space of $J_{a,b}\psi=E\psi$ is two dimensional and any nontrivial solution either grows exponentially along a subsequence that goes to $\infty$ or along a subsequence that goes to $-\infty$. In case (4),  $J_{(j_1,j_2]}\phi=E\phi$ has only zero solution.
	\end{lemma}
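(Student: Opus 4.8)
The plan is to convert each case into the transfer-matrix relations \eqref{eq:SchrCocycle1} and \eqref{eq:n_step_transfer0} and feed the resulting vectors into Lemma~\ref{l.expgrowth}, using the domination conditions (2) and (4) of Definition~\ref{d.uhsequence} (and the equivalent form \ref{eq:equiv_c4}) to locate the relevant initial vectors relative to the splitting $\C^2=E^u(j)\oplus E^s(j)$. Case (4) is the quickest: put $N:=j_2-j_1\ge 1$ and evaluate the displayed formula for $B^E_N(j_1+1)$. Since $a_{j_1}=0$ the $(1,2)$- and $(2,2)$-entries vanish, and since $a_{(j_1+1)+N-1}=a_{j_2}=0$ the $(2,1)$-entry vanishes, so $B^E_{j_2-j_1}(j_1+1)=\left(\begin{smallmatrix}p_{j_2-j_1}(j_1+1,E)&0\\ 0&0\end{smallmatrix}\right)$. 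By Remark~\ref{r:domination_sequence}, $B^E\in\CD\CS$ implies \ref{eq:equiv_c4}, so this matrix is nonzero; hence $\det(E-J_{(j_1,j_2]})=p_{j_2-j_1}(j_1+1,E)\ne 0$, the finite matrix $E-J_{(j_1,j_2]}$ is invertible, and $J_{(j_1,j_2]}\phi=E\phi$ has only the zero solution.

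For case (3), every $a_j\ne 0$, so each $A^E(j)=\bigl(\prod a\bigr)$-rescaling of $B^E(j)$ is invertible, the equation $J_{a,b}\psi=E\psi$ has a two-dimensional solution space parametrized by $v(0):=\binom{\psi(0)}{\psi(-1)}$, and $v(j)\ne 0$ for all $j$ whenever $\psi\not\equiv 0$ (if $v(j_*)=0$ then $\psi(j_*)=\psi(j_*-1)=0$, and since every $a_j\ne0$ the recursion forces $\psi\equiv0$). Decompose $v(0)=c_u\vec u(0)+c_s\vec s(0)$, so $(c_u,c_s)\ne(0,0)$. If $c_u\ne 0$, combine $\|A^E_n(0)\vec u(0)\|\ge k\lambda_0^n$ from Lemma~\ref{l.expgrowth}(1) with the domination estimate $\|A^E_n(0)\vec s(0)\|<(c\lambda^n)^{-1}\|A^E_n(0)\vec u(0)\|$ (conditions (1)--(2) of Definition~\ref{d.uhsequence}, after dividing $B^E_n(0)=\bigl(\prod_{i=0}^{n-1}a_i\bigr)A^E_n(0)$ through) to get $\|v(n)\|=\|A^E_n(0)v(0)\|\ge\tfrac12|c_u|k\lambda_0^n$ for all large $n$, i.e. exponential growth along $j\to+\infty$. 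If $c_u=0$, then $c_s\ne 0$ and $v(0)\in E^s(0)$, and Lemma~\ref{l.expgrowth}(2) gives $\|v(-n)\|=|c_s|\,\|A^E_{-n}(0)\vec s(0)\|\ge|c_s|k\lambda_0^n$, i.e. growth along $j\to-\infty$.

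Cases (1) and (2) are symmetric; take (1). Extend a solution $\phi$ of $J_{(j_0,+)}\phi=E\phi$ by $\phi(j_0):=0$; because $a_{j_0}=0$ the extension satisfies \eqref{eq:SchrCocycle1} for all $j\ge j_0+1$, and because $a_j\ne 0$ for $j>j_0$ the solution is determined by $\phi(j_0+1)$, so the solution space is one-dimensional with initial vector $v(j_0+1)=\phi(j_0+1)\binom{1}{0}$. The crux is to show $E^u(j_0+1)=\mathrm{span}\{\binom{1}{0}\}$. Since $a_{j_0}=0$, $B^E(j_0)=\left(\begin{smallmatrix}E-b_{j_0}&-\overline{a_{j_0-1}}\\ 0&0\end{smallmatrix}\right)$ has rank one with image $\mathrm{span}\{\binom{1}{0}\}$ (it cannot be the zero matrix, else \ref{eq:equiv_c4} fails). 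If $E^u(j_0)$ were $\ker B^E(j_0)$, then $B^E_N(j_0)\vec u(j_0)=0$ would force, by condition (2), $B^E_N(j_0)\vec s(j_0)=0$ as well and hence $B^E_N(j_0)=0$, contradicting condition (4); and if neither $E^u(j_0)$ nor $E^s(j_0)$ equaled $\ker B^E(j_0)$, then $B^E$-invariance would put both $E^u(j_0+1)$ and $E^s(j_0+1)$ equal to $\mathrm{Im}\,B^E(j_0)=\mathrm{span}\{\binom{1}{0}\}$, contradicting $E^u(j_0+1)\ne E^s(j_0+1)$. Hence $E^s(j_0)=\ker B^E(j_0)$ and $E^u(j_0+1)=\mathrm{span}\{\binom{1}{0}\}$, so $v(j_0+1)\in E^u(j_0+1)$ and Lemma~\ref{l.expgrowth}(1) gives $\|v(j_0+1+n)\|\ge|\phi(j_0+1)|\,k\lambda_0^n$, i.e. exponential growth as $j\to+\infty$. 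For case (2), extend instead by $\phi(j_0+1):=0$; then $B^E(j_0)v(j_0)=a_{j_0}v(j_0+1)=0$, so $v(j_0)\in\ker B^E(j_0)$, the identical subcase analysis gives $\ker B^E(j_0)=E^s(j_0)$, and Lemma~\ref{l.expgrowth}(2) yields exponential growth as $j\to-\infty$.

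I expect the main obstacle to be the bookkeeping in cases (1) and (2): extracting from the singular matrix $B^E(j_0)$ the identity $E^s(j_0)=\ker B^E(j_0)$, and hence that the one-dimensional half-line solution starts in the unstable (resp. stable) direction. This is precisely where the three subcases mentioned in the introduction enter, two of them being excluded by domination (conditions (2) and (4)); the remaining possibility, where the singular column must be one of the invariant lines, requires a slightly careful case split according to which of $E-b_{j_0}$ and $a_{j_0-1}$ is nonzero. Once that is settled, the exponential growth is immediate from Lemma~\ref{l.expgrowth}, and the remaining steps---the transfer-matrix reformulation and the dimension counts of the solution spaces---are routine.
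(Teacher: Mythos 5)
Your proposal is correct, and its overall engine is the same as the paper's: reduce each case to the transfer-matrix iteration and invoke Lemma~\ref{l.expgrowth}, with case (4) handled, exactly as in the paper, by observing that $a_{j_1}=a_{j_2}=0$ kills three entries of $B^E_{j_2-j_1}(j_1+1)$ so that condition \ref{eq:equiv_c4} forces $p_{j_2-j_1}(j_1+1,E)=\det(E-J_{(j_1,j_2]})\neq0$. Where you genuinely diverge is in the half-line cases (1)--(2). The paper never identifies $E^u(j_0+1)$ explicitly: it simply generates a solution of $J_{(j_0,+)}\phi=E\phi$ from an arbitrary vector of $E^u(j_0+1)$ (noting that the second coordinate is cancelled by $a_{j_0}=0$), concludes from the exponential growth in Lemma~\ref{l.expgrowth} that this solution is nontrivial, and hence that it spans the one-dimensional solution space. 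You instead first prove, directly from conditions (2), (4) and invariance in Definition~\ref{d.uhsequence}, that $E^s(j_0)=\ker B^E(j_0)$ and $E^u(j_0+1)=\mathrm{Im}\,B^E(j_0)=\mathrm{span}\{\binom{1}{0}\}$, and then check that the Dirichlet initial vector lies on that line; your exclusion of the two bad subcases (kernel equal to $E^u(j_0)$, or to neither invariant line) is sound. This buys you a clean structural fact that the paper only records much later (Remark~\ref{r.particular_su}, equation \eqref{eq:paritcular_s2}) and there derives by the entirely different Green's-function machinery, at the cost of a slightly longer argument; the paper's route is more economical but leaves the identification of the invariant lines implicit. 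In case (3) you also make explicit the non-cancellation step ($\|A^E_n(0)v(0)\|\ge|c_u|\|A^E_n(0)\vec u(0)\|-|c_s|\|A^E_n(0)\vec s(0)\|$ together with the domination ratio), which the paper passes over tersely; that is a welcome clarification rather than a deviation.
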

	\begin{proof}
		Without loss of generality, we may focus on case (1) since case (2) can be done similarly. Consider the solution space $\{\phi:\Z_{j>j_{0}}\to\C: J_{(j_{0},+)}\phi=E\phi\}$. By \eqref{eq:J_restriction2}, the solution is uniquely determined by $\phi_{j_0}$ which implies the space is one-dimensional. More precisely, for all $j\ge j_{0}+2$, $\phi(j_0)$ is determined by equation \eqref{eq:J_restriction2} which by \eqref{eq:n_step_transfer0} is equivalent to
	    $$
	    \binom{\phi(j+1)}{\phi(j)}=A^E_{j-j_0-1}(j_{0}+1)\binom{\phi(j_{0}+1)}{\phi(j_{0})}.
	    $$
	    Note in the equation above, the choice of $\phi(j_0)$ is not relevant as it will be canceled by $a_{j_{0}}=0$. In particular, if we define $\phi^u$ to be a solution generated by some $\binom{\phi^u(j_0+1)}{\phi^u(j_0)}\in E^u(j_{0}+1)$, then by Lemma ~\ref{l.expgrowth}, we will have for all $j\ge j_{0}+1$:
		\beq\label{eq:u_half}
		\left\|\binom{\phi^u(j+1)}{\phi^u(j)}\right\|=\left\|\binom{\phi^u(j_0+1)}{\phi^u(j_0)}\right\|\|A^E_{j-j_{0-1}}(j_{0}+1)\vec u(j) \| \ge \tilde k \lambda^{j-j_{0}}.
		\eeq
		It is clear that the above estimate implies that $|\phi^u(n_l)|>\tilde k\l^{n_l-j_{0}}$ for a strictly monotone increasing sequence $n_l>j_{0}$. In particular, it is a nontrivial solution which must form a basis of the solution space. Hence, all nontrivial solutions of $J_{(j_{0},+)}\phi=E\phi$ are some $\phi^u$ generated by a nonzero vector in $E^{u}(j_{0}+1)$ and it holds that for some $\tilde k=\tilde k(\phi^u)>0$,
		\beq\label{eq:u_half1}
		|\phi^u(n_l)|>\tilde k\l^{n_l-j_{0}}\mbox{ for all }l\ge 1.
		\eeq
		Going backwards in time $j$ and running the same proof above, we obtain the following information in case (2). Any nontrivial solution $\phi^s$ of $J_{(-,j_0]}\phi=E\phi$ is generated by a vector in $E^s(j_0)$.  By case (2) of Lemma~\ref{l.expgrowth} and the estimate above we obtain for some $n_t<j_0$, which is strictly monotone decreasing in $t\ge 1$, that
		\beq\label{eq:s_half}
		|\phi^s(n_t)|>\tilde k\l^{j_0-n_t}\mbox{ for all }t\ge 1.
		\eeq
		
		To consider case (3), we fix any $j_0$. Then all solutions are of the form $\psi=\alpha\phi^u+\beta\phi^s$ where $\phi^u$ is generated by a vector in $E^u(j_0)$ and $\phi^s$ is generated by a vector in $E^s(j_0)$. Since $a_j\neq 0$ for all $j\in\Z$, the same proof as in the first two cases yield that $\phi^u(n_l)$ grows exponentially fast as $n_l\to\infty$ and $\phi^s(n_t)$ grows exponentially fast as $n_t\to-\infty$ where $\{n_l\}$ and ${n_l}$ are some sequences. Hence, if $\psi$ is nontrivial, it must grow exponentially fast at least along one of the sequences $\{n_l\}$ and $\{n_t\}$.
		
		Now, we consider case (4), i.e. the finite restrictions$J_{(j_1, j_2]}$. We want to show
		$$
		\dim \{\phi^f(n), j_1< n \le j_2: (J_{(j_{1},j_{2}]} -E)\phi^f(n)=0 \}=0.
		$$
		If not, then $(J_{(j_{1},j_{2}]} -E)\phi^f=0$ for some nonzero vector $\phi^f$. Set $N=j_2-j_1$. Then we have $p_{N}(j_{1}+1,E)=\det(E-J_{(j_{1},j_{2}]})=0$ since $E\in\sigma(J_{(j_1,j_2]})$. Recall $a_{j_{1}}=a_{j_{2}}=0$. Thus, we have
		$$
		B_N^E(j_1+1) = \begin{pmatrix}
			p_{N}(j_{1}+1,E) & -\overline{a_{j_{1}}}p_{N-1}(j_{1}+2,E) \\
			a_{j_{2}}p_{N-1}(j_{1}+1,E) & -\overline{a_{j_{1}}}a_{j_{2}}p_{N-2}(j_{1}+2,E) 
		\end{pmatrix}
		$$
		is a zero matrix, which contradicts condition (4) of Definition~\ref{d.uhsequence}. 
		Hence, the solutions of $(J_{(j_{1},j_{2}]} -E)\phi^f=0$ are all trivial. 
	\end{proof}
	Let $\mathcal{E}_g(J_{a,b})$ be the set of generalized eigenvalues of $J_{a,b}$, i.e. all $E \subset \C$ that admit a nontrivial polynomially bounded solution of $J_{a,b}\psi=E\psi$. From the theorem of Sch'nol-Berezanskii \cite{berezanskii,Sch}, it is well-known that
	$$
	\overline{\mathcal{E}_g(J_{a,b})}= \sigma(J_{a,b}).
	$$ 
	In particular, it suffices to show for any $E$ such that $B^E\in\CD\CS$, all nontrivial solutions of $J_{a,b}\psi=E\psi$ are not polynomially bounded. Indeed, by Theorem~\ref{l.opennessDS}, $B^{E'}\in\CD\CS$ for all $E'$ close to $E$. Hence, for all $E'$ close $E$, all nontrivial solutions of $J_{a,b}\psi=E'\psi$ are not polynomially bounded which implies $E\in\overline{\CE_g(J_{a,b})}^\complement=\rho(J_{a,b})$. 
	
	Hence, to prove
	$$
	\{E:B^{E} \in \CD\CS\ \}\subset\rho(J_{a,b}),
	$$
	we only need to show all nontrivial solutions of $J_{a,b}\psi=E\psi$ are not polynomially bounded when $B^E\in\CD\CS$. To this end, we define 
	\beq\label{eq:j_min_max}
	j_{\max}:=\sup \{j\in\Z:a_j=0\} \mbox{ and }j_{\min}:=\inf\{j\in\Z: a_j=0\},
	\eeq
	if they exist. Note we allow $j_{\max}$ to be $\infty$ and $j_{\min}$ to be $-\infty$. 
	
	\begin{proof}[Proof of the first direction of Theorem~\ref{t.uniformhers}]
	Now we are ready to prove \eqref{eq:firstdirection}. We may divide the proof into the following cases.
	
	Case I. $a_j\neq 0$ for all $j\in\Z$. Then  case (3) of Lemma~\ref{l.trivial} directly implies that all nontrivial solutions of $J_{a,b}\psi=E\psi$ are not polynomially bounded.
	
	Case II. $-\infty<j_{\min}\le j_{\max}<\infty$. If $j_{\min}=j_{\max}$, then we can decompose $J_{a,b}$ as the following direct sum
	$$
		J_{a,b}=J_{(-,j_{\min}]}\oplus J_{(j_{\max},+)}.
	$$
	If $j_{\min}<j_{\max}$, then we have
	$$
	J_{a,b}=J_{(-,j_{\min}]}\oplus J_{(j_{\min}, j_{\max}]}\oplus J_{(j_{\max},+)}.
	$$
	By cases (1), (2), and (4) of Lemma~\ref{l.trivial}, in both cases, the solution space is spanned by
	$$
	\{(\phi^s,0,0,\ldots), (\ldots,0,0, \phi^u)\}
	$$
	where $\phi^s$ and $\phi^u$ are any nontrivial solutions of $J_{(-,j_{\min}]}\phi=E\phi$ and $J_{(j_{\max},+)}\phi=E\phi$, respectively. Hence, by \eqref{eq:u_half1} and \eqref{eq:s_half} and similar to Case I, all nontrivial solutions of the eigenvalue equation in both cases must grow exponentially fast at least along some subsequences go to $\infty$ or $-\infty$.
	
	Case III. Either $j_{\min}=-\infty$ and $j_{\max}<\infty$; or $j_{\min}>-\infty$ and $j_{\max}=\infty$. In the first case, there exists a strictly monotone decreasing sequence of integers $\{j_k\}_{k\ge 1}$ such that $j_1=j_{\max}$ and $a_{j_k}=0$ for all $k\ge 1$. Hence we may decompose the operator $J_{a,b}$ as 
	$$
	J_{a,b}=\ldots\oplus J_{(j_{k+1}, j_k]}\oplus\cdots \oplus J_{(j_{2}, j_1]}\oplus J_{(j_1,+)}.
	$$
	Then by cases (1) and (4) of Lemma~\ref{l.trivial}, the solution space of $J_{a,b}\phi=E\phi$ is spanned by $(\ldots, 0,0,\phi^u)$, where $\phi^u$ is any nontrivial solution of $J_{(j_{\max},+)}\phi=E\phi$. By \eqref{eq:u_half1}, all nontrivial solutions of $J_{a,b}\phi=E\phi$ grows exponentially fast along $\{n_l\}$ where $n_l\to\infty$ as $l\to\infty$.
	
	In the second case, we may decompose the operator $J_{a,b}$ as 
	$$
	J_{a,b}=J_{(-,j_1)}\oplus J_{(j_{1}, j_2]}\oplus \cdots\oplus J_{(j_{k}, j_{k+1}]}\oplus\cdots 
	$$
	where $j_1=j_{\min}$ and $a_{j_k}=0$ for all $k\ge 1$. Hence, the solution space of $J_{a,b}\phi=E\phi$ is spanned by $(\phi^s, 0, 0\ldots)$, where $\phi^s$ is any nontrivial solution of $J_{(-,j_{\min}]}\phi=E\phi$. By \eqref{eq:s_half}, all nontrivial solutions of $J_{a,b}\phi=E\phi$ grow exponentially along $\{n_t\}$ where $n_t\to-\infty$ as $t\to\infty$.
	
	Case IV. $j_{\min}=-\infty$ and $j_{\max}=\infty$. Then there exists a subsequence $(j_{k})_{k \in \Z}$ such that $j_k\to\pm \infty$ as $k\to\pm\infty$ and $a_{j_{k}}=0$ for all $k\in\Z$. Hence, we may decompose $J_{a,b}$ as 
	$$
	J_{a,b} = \bigoplus^{\infty} _{k=-\infty}J_{(j_{k},j_{k+1}]}.
	$$
	By case (4) of Lemma~\ref{l.trivial}, all solutions of $J_{a,b}\phi=E\phi$ are trivial.
	
	Since the four cases exhaust all the possibilities, the proof is completed.
	
	\end{proof}

	\section{Domination Away from the Spectrum}\label{ss:NotUHtoSpectrum} 
	
	In this section we prove the second half \eqref{eq:secondhalf} of Theorem~\ref{t.uniformhers}. That is,
	$$
	\rho(J_{a,b}) \subset \{E:B^{E} \in \CD\CS\ \}.
	$$
	
	\subsection{Lower bound of the norm of iterations of Jacobi cocycles}\label{ss:lowerbound_norm}
	In this section, we show if $E\in\rho(J_{a,b})$, then $B^E$ satifies the stronger version \ref{eq:equiv_c4} of condition (4) of Definition~\ref{d.uhsequence}. For simplicity, we denote by $J_N(j)=J_{[j,j+N)}$, the restriction of $J_{a,b}$ on $[j, N)$. Recall we have $p_N(j,E) = \det (E - J_N(j))$. Let $\mathrm{diam}(S)$ be the diameter of a subset $S\subset\C$.
	
	\begin{lemma}\label{l.Weyls}
		Let $E\in\rho(J_{a,b})$ and fix a $N\in\Z_+$. For all $\varepsilon >0$, there exists a $\delta =\delta(\e)>0$ such that the following is true: if $|p_N(j,E)|\le \delta$ for some $j\in\Z$, then there exists an $E_0\in\sigma(J_N(j))$ such that $|E-E_0|< \varepsilon$.
	\end{lemma}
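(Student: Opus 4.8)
We want: if $E \in \rho(J_{a,b})$ and $N \in \Z_+$ is fixed, then for every $\e > 0$ there is $\delta = \delta(\e) > 0$ such that $|p_N(j,E)| \le \delta$ for some $j$ forces $\sigma(J_N(j))$ to meet the disc of radius $\e$ about $E$.

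The plan is to reduce the statement to an elementary fact about the characteristic polynomial of a fixed-size matrix. The key observation is that $J_N(j)=J_{[j,j+N)}$ is an $N\times N$ matrix — a compression of the self-adjoint operator $J_{a,b}$, hence Hermitian, so its eigenvalues $E_1(j),\dots,E_N(j)$ are real and $\sigma(J_N(j))=\{E_1(j),\dots,E_N(j)\}$. By the very definition $p_N(j,E)=\det(E I_N-J_N(j))$, so $p_N(j,\cdot)$ is the monic degree-$N$ characteristic polynomial of $J_N(j)$ and factors as
$$
p_N(j,E)=\prod_{k=1}^{N}\bigl(E-E_k(j)\bigr).
$$

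From here I would record the trivial lower bound obtained by bounding each factor below by the distance from $E$ to the nearest root: writing $d_j:=d\bigl(E,\sigma(J_N(j))\bigr)=\min_{1\le k\le N}|E-E_k(j)|$, every factor satisfies $|E-E_k(j)|\ge d_j$, and therefore
$$
|p_N(j,E)|=\prod_{k=1}^{N}|E-E_k(j)|\ \ge\ d_j^{\,N}.
$$
The point to stress is that this bound is uniform in $j$: it uses nothing about the matrix $J_N(j)$ beyond the fact that it has size $N$, which is exactly why the resulting $\delta$ may be chosen depending on $\e$ (and the fixed $N$) alone. Then, given $\e>0$, I would simply put $\delta=\delta(\e):=(\e/2)^N>0$. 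If $|p_N(j,E)|\le\delta$ for some $j\in\Z$, the displayed inequality yields $d_j^{\,N}\le(\e/2)^N$, i.e. $d_j\le\e/2<\e$; since $\sigma(J_N(j))$ is a nonempty finite set the infimum defining $d_j$ is attained, so there is $E_0\in\sigma(J_N(j))$ with $|E-E_0|=d_j<\e$, as required.

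There is essentially no obstacle here; the only points worth a word are that $J_N(j)$ is genuinely a fixed-size matrix (so the factorization is legitimate, and Hermiticity — though not actually used — makes $\sigma(J_N(j))$ real), and that the resulting $\delta$ is visibly independent of $j$. I also note that the hypothesis $E\in\rho(J_{a,b})$ plays no role in the proof of this lemma itself: it is recorded because of how the lemma is used downstream, where, combined with an estimate in the opposite direction, a small value of $|p_N(j,E)|$ would force $E$ near $\sigma(J_N(j))$ and ultimately near $\sigma(J_{a,b})$, contradicting $E\in\rho(J_{a,b})$. That contradiction, however, is drawn later rather than here.
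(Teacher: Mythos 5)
Your proof is correct, and it takes a genuinely different and substantially more elementary route than the paper's. The paper reformulates the lemma in terms of the sublevel sets $I_N(j,\delta)=\{E:|p_N(j,E)|\le\delta\}$, shows each connected component contains a zero via the maximum modulus principle, and then bounds the diameter of these components using a Markov-type inequality $\|\frac{d^rp}{dz^r}\|_{\CK}\le M(\deg p)^{r\alpha}\|p\|_{\CK}$ applied to the $N$-th derivative (which equals $N!$ since $p_N(j,\cdot)$ is monic), deriving a contradiction if the diameter stays bounded below as $\delta\to0$. You instead exploit directly that $p_N(j,\cdot)$ is the monic degree-$N$ characteristic polynomial of the $N\times N$ matrix $J_N(j)$, factor it as $\prod_k(E-E_k(j))$, and observe $|p_N(j,E)|\ge d\bigl(E,\sigma(J_N(j))\bigr)^N$, so $\delta=(\e/2)^N$ works explicitly. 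Both arguments yield the required uniformity in $j$ because they use only that the polynomial is monic of fixed degree $N$; yours has the advantages of giving an explicit, computable $\delta(\e)$ and avoiding both complex-analytic machinery and the external reference to Markov's inequality, while the paper's argument additionally yields the (unneeded here) geometric fact that every connected component of the sublevel set is small. Your closing remark is also accurate: the hypothesis $E\in\rho(J_{a,b})$ is not used in either proof and is only relevant to how the lemma is applied in Lemma~\ref{l.lowboundbn}.
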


	\begin{proof}
		To prove present lemma, it suffices to show the following result. 
		
		\emph{Fix $N\in\Z_+$ and define $I_N(j, \delta):= \{E \in \C: |p_N(j, E)|\le \delta\}$. For all $\e >0$, there exists $\delta =\delta(\e)>0$ such that for all $j \in \Z$ and all connected components $\CK$ of $I_N(j, \delta)$, we have
			$$
			\mathrm{diam}(\CK)<\e.
			$$
		Moreover, there is a zero of $p_N(j, E)$ in $\CK$.}
		
        To show that each connected component of $I_N(j,\delta)$ contains a zero of $p_N(j,E)$, we just need to show that $|p_N(j,E)|$ has no positive local minimum. Suppose $|p_N(j,E)|$ has a positive local minimum at $E_0$. Then $p_N(j,E): B_r(E_0)=\{E:|E-E_0|< r\}\to\C$ has no zeros for small $r>0$. Hence, the holomorphic function $\frac1{p_N(j,E)}:B_r(E_0)\to\C$ attains its maximum modulus  at an interior point $E_0$. This contradicts the maximum principle since $p_N(j,E)$ is nonconstant for all $j\in\Z$ and all $N\in\Z_+$.  
		
		Next we show for all $\e>0$ the existence of desired $\delta$. Here we use the following version of the so-called Markov's inequality, see. e.g. \cite[Lemma 3.1]{siciak}:
		
		For any connected compact set $\CK\subset\C$ with $\mathrm{diam}(\CK)>\eta>0$, there exist positive constants $M, \alpha$, depending only on $\eta$, such that it holds for all polynomials $p(z)$ and all $r\in\N$ that
		\beq\label{eq:markoff}
		\left\|\frac{d^rp_n}{dz^r}\right\|_{\CK} \le M(\deg p)^{r\alpha}\|p_n\|_{\CK}, 
		\eeq
		where $\|\cdot\|_{\CK}$ denotes the supremum norm on $\CK$ and $\deg p$ is the degree of the polynomial $p$. Now suppose the statement at the begining of this proof is not true. Then there exists $\e_0>0$ such that for all $\delta>0$, there exists $j \in \Z$ and a connected component $\CK$ of $I_N(j,\delta)$ whose diameter satisfies
		$$
		\mathrm{diam}(\CK)\ge \e_0.
		$$
		Applying \eqref{eq:markoff} to $p_N(j, E)$ on $\CK$, we obtain
		$$
		\left\|\frac{d^{N}p_N(j, E)}{dE^N}\right\|_{\CK}\le MN^{r\alpha}\|p_N(j,E)\|_{\CK} \le MN^{N\alpha}\delta.
		$$
		It is clear that the coefficient of the highest order term $E^N$ in $p_N$ is $1$. Hence, we obtain for all $\delta>0$ that
		$$
		N!\le MN^{N\alpha}\delta
		$$
		where $N$ is fixed and $M$ and $\a$ depend only on $\e_0$. This is clearly not possible if we choose $\delta$ to be smaller than $\frac{N!}{MN^{N\a}}$.
	\end{proof}
	
	\bigskip
	
	First, we note the following facts from functional analysis. Let $H$ be a self-adjoint operator, i.e. $H^*=H$, on a Hilbert space $\mathbb H$. Then Weyl's criterion (see e.g. \cite{reedsimon}) says:
	\beq\label{eq:weyl}
      E\in\sigma(H) \mbox{ if and only if : for all }\e>0,\ \|(H-E)u\|< \e \mbox{ for some unit vector }u\in \mathbb H.
	\eeq
	Moreover, for each $z\in\rho(H)$, we have
	\beq\label{eq:inverseOperatorNorm}
	\|(H-zI)^{-1}\|^{-1}=d(z,\sigma(H)).
	\eeq
	Clearly, $J_{a,b}$ is self-adjoint. We define $\delta_j(n)\in\ell^2(\Z)$ to be the sequence such that 
	$$
	\delta_j(n)=\begin{cases} 1,& \mbox{ if } m=j\\
		0, & \mbox{ if }m\neq j.
		\end{cases}
	$$ 
	
	  Throughout this section, we fix $E\in\rho(J_{a,b})$ and we let $M>0$ be that 
	  $$
	  \sup_{j\in\Z}\{|a_j|,|b_j|, |b_j-E|, |E|, \|J_{a,b}\|\}<M.
	  $$
	  Moreover, we define
	\beq\label{eq:dist_to_spectrum}
	\delta:=d(E,\sigma(J_{a,b}))=\|(J_{a,b}-E)^{-1}\|^{-1}
	\eeq
	\begin{lemma}\label{l.lowboundbn}
		Let $E\in\rho(J_{a,b})$. Then for all $n \in \Z_+$, it holds that $$\inf_{j \in \Z} \|B^E_n(j)\|>0.$$
	\end{lemma}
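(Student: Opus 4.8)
The plan is to argue by contradiction. Suppose that for some fixed $n\in\Z_+$ one has $\inf_{j\in\Z}\|B^E_n(j)\|=0$, and choose $j_k\in\Z$ with $\|B^E_n(j_k)\|\to0$. Since $\|B^E\|_\infty\le M$, submultiplicativity gives $\|B^E_N(j_k)\|\le M^{\,N-n}\|B^E_n(j_k)\|\to0$ for every fixed $N\ge n$, so we may work with arbitrarily long windows $[j_k,j_k+N)$. From the formula for $B^E_N(j)$ recorded before Lemma~\ref{l.trivial}, its $(1,1)$-entry is $p_N(j_k,E)=\det(E-J_N(j_k))$, so $p_N(j_k,E)\to0$ for all $N\ge n$; and since $\det B^E(i)=\overline{a_{i-1}}a_i$ (a direct computation from \eqref{eq:SchrCocylce2}), we get $|\det B^E_N(j_k)|=|a_{j_k-1}|\,|a_{j_k+N-1}|\prod_{i=j_k}^{j_k+N-2}|a_i|^2\le\|B^E_N(j_k)\|^2\to0$, whence $\min_{\,j_k-1\le i\le j_k+N-1}|a_i|\to0$. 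The case $n=1$ is already done, uniformly: applying Weyl's criterion \eqref{eq:weyl} together with \eqref{eq:inverseOperatorNorm} to the unit vector $\delta_j$ gives $\|(J_{a,b}-E)\delta_j\|^2=|E-b_j|^2+|a_{j-1}|^2+|a_j|^2\ge\delta^2$, and the left side is at most $2\|B^E(j)\|^2$. Also, if $E\notin\R$ or $E\notin\mathrm{conv}\big(\sigma(J_{a,b})\big)$, then since $J_N(j)$ is a compression $P^{\ast}J_{a,b}P$ of $J_{a,b}$ its spectrum stays at distance $\ge\delta$ from $E$ for all $j,N$, so the contrapositive of Lemma~\ref{l.Weyls} bounds $|p_N(j,E)|$, hence $\|B^E_N(j)\|$, below; thus the interesting situation is $E$ real in a gap, which I treat next.

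Fix a large $N\ge n$. By Lemma~\ref{l.Weyls}, since $p_N(j_k,E)\to0$, for all large $k$ there is $E_k\in\sigma(J_N(j_k))$ with $|E_k-E|<\delta/4$; let $\phi_k$ be a unit eigenvector of $J_N(j_k)=J_{[j_k,j_k+N)}$ for $E_k$ and let $u_k\in\ell^2(\Z)$ be its extension by zero, so $\|u_k\|=1$. Using $J_N(j_k)=P^{\ast}J_{a,b}P$ one computes $(J_{a,b}-E)u_k=(E_k-E)u_k+r_k$, where $r_k$ is supported on the two sites $j_k-1$ and $j_k+N$ with $|r_k(j_k-1)|=|a_{j_k-1}\phi_k(j_k)|$ and $|r_k(j_k+N)|=|\overline{a_{j_k+N-1}}\phi_k(j_k+N-1)|$. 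Since $E\in\rho(J_{a,b})$, \eqref{eq:weyl}--\eqref{eq:inverseOperatorNorm} force $\|(J_{a,b}-E)u_k\|\ge\delta$, and therefore, for all large $k$,
\beq\label{eq:planbdry}
|a_{j_k-1}|\,|\phi_k(j_k)|+|a_{j_k+N-1}|\,|\phi_k(j_k+N-1)|\ \ge\ \tfrac{\delta}{2}.
\eeq
On the other hand the Dirichlet conditions $\phi_k(j_k-1)=\phi_k(j_k+N)=0$ turn the eigenvalue equation into the cocycle identity
$$
B^{E_k}_N(j_k)\binom{\phi_k(j_k)}{0}=\Big(\prod_{i=j_k}^{j_k+N-1}a_i\Big)\binom{0}{\phi_k(j_k+N-1)},
$$
and $\|B^{E_k}_N(j_k)-B^E_N(j_k)\|\le C(N,M)\,|E_k-E|\to0$ because the entries of $B^{(\cdot)}_N$ are polynomials in the energy with $M$-bounded coefficients, so $\|B^{E_k}_N(j_k)\|\to0$ as well.

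Now a dichotomy. If along a subsequence $\prod_{i=j_k}^{j_k+N-1}|a_i|\ge c>0$, then the cocycle identity gives $|\phi_k(j_k+N-1)|\le\|B^{E_k}_N(j_k)\|/c\to0$, and, using that $B^{E_k}_N(j_k)$ is then invertible with $\|[B^{E_k}_N(j_k)]^{-1}\|=\|B^{E_k}_N(j_k)\|/|\det B^{E_k}_N(j_k)|\le\|B^{E_k}_N(j_k)\|/c^{2}$, likewise $|\phi_k(j_k)|\le M^{N}\|B^{E_k}_N(j_k)\|/c^{2}\to0$; since $|a_i|\le M$ this contradicts \eqref{eq:planbdry}. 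The remaining case — the window always contains a site $i_k$ with $a_{i_k}\to0$ — is where the genuine work lies, and is the main obstacle. Here one passes to a subsequence so that $i_k-j_k$ is a constant $s\in\{-1,0,\dots,N-1\}$ and exploits the resulting near-decoupling of $J_{a,b}$ across $i_k$: with the $i_k$-th link set to zero the operator splits as a direct sum, $E$ lies within $|a_{i_k}|+|E_k-E|\to0$ of the spectrum of one of the two finite pieces, and on that shorter piece either all interior $a_i$ are bounded below — reducing to the previous case — or one iterates; the edge situations (when $a_{i_k}$ is itself a boundary link of the window, so that the corresponding component of $r_k$ is already $o(1)$, or when one reaches a window of bounded length where the $n=1$ analysis applies) are disposed of separately, and the window is kept long throughout using the freedom from the first paragraph. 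Making this dichotomy and its iteration precise, so as to handle every pattern in which the $a_i$ can vanish, is exactly the ``dealing with singular operators directly'' difficulty flagged in the introduction; once it is carried out one obtains $E\in\sigma(J_{a,b})$, the desired contradiction, and one checks along the way that the resulting lower bound for $\|B^E_n(j)\|$ depends only on $n$, $\delta=d(E,\sigma(J_{a,b}))$ and $M$.
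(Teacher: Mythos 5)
Your setup is sound and your easy cases are handled correctly: the $n=1$ computation via Weyl's criterion matches the paper's base case, and the reduction via Lemma~\ref{l.Weyls} to an approximate eigenvector of a finite block $J_N(j_k)$, together with the boundary-term inequality, is exactly the right mechanism. But the proof is not complete. The entire difficulty of the lemma sits in the branch of your dichotomy where the window $[j_k,j_k+N)$ contains sites with $a_{i_k}\to 0$, and there you only describe a plan (``one iterates \dots the edge situations are disposed of separately'') without carrying it out. As written, the iteration has no visible termination or uniformity argument: each ``split'' produces a shorter block on which you must again decide whether all interior $a_i$ are bounded below, and the threshold for ``bounded below'' must be chosen uniformly in $k$ across all nesting depths. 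Nothing in your sketch controls this, and you yourself flag it as the main obstacle. So there is a genuine gap precisely at the step the lemma exists to address.

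For comparison, the paper closes this gap by organizing the argument as an induction on $n$ rather than a direct contradiction for fixed $n$. The inductive hypothesis $\inf_{1\le m<N}\inf_j\|B^E_m(j)\|=c>0$ is the quantitative input your sketch is missing: it supplies a uniform lower bound on all shorter products, which is what rules out the problematic configurations. Concretely, the paper splits into cases according to whether $|a_{j_0-1}|$ and $|a_{j_0+N-1}|$ are below a threshold $\gamma$, and uses the three-term recursion $p_N(j_0,E)=(E-b_{j_0})p_{N-1}(j_0+1,E)-|a_{j_0}|^2p_{N-2}(j_0+2,E)$ to peel off one site at a time: at each step either some $|a_{j_0+\ell}|$ is small, in which case one truncates there and builds the approximate eigenvector (your near-decoupling idea, but anchored at an explicit site), or it is not, in which case the smallness propagates to $p_{N-\ell}$ of the shorter block. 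In the remaining case the recursion forces $\|B^E_{N-1}(j_0+1)\|$ to be small, contradicting the inductive hypothesis. If you want to complete your proposal, the cleanest fix is to adopt this inductive structure; your boundary inequality \eqref{eq:planbdry} then becomes one of the subcases rather than the whole argument.
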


	\begin{proof}
		We will prove this lemma by induction. First, we set $n=1$. We need to show
		$$
		\inf_{j \in \Z} \|B^E(j)\|>0.
		$$
		Suppose $\inf_{j \in \Z} \|B^E(j)\|=0$. Then, for all $\e>0$, there exists $j_0 \in \Z$ such that $\|B^E(j_0)\|< \epsilon$. Hence, we have
		$$
		\left\| \begin{pmatrix} E-b_{j_0}& -\overline{a_{j_0-1}} \\\ a_{j_0} & 0\end{pmatrix} \right \| < \e   \mbox{ implies } |E-b_{j_0}|< \e,\ |a_{j_0-1}|< \e,\mbox{ and } |a_{j_0}|< \e.
	     $$
		Now, let's consider $(J_{a,b}-E)\delta_{j_0}.$ Clearly, 
		$$
		(J_{a,b}-E)\delta_{j_0}(n)=\overline{a_{n-1}}\delta_{j_0}(n-1)+a_{n}\delta_{j_0}(n+1)+(b_n-E)\delta_{j_0}(n)
		$$
		which is 0 for all $n \not = j_0-1, j_0, j_0+1$. On the other hand, it is easy to see that
		$$ 
		(J_{a,b}-E)\delta_{j_0}(n)= \begin{cases}
			a_{j_0-1}, &\mbox{ if } n=j_0-1\\
			b_{j_0}-E, &\mbox{ if } n=j_0 \\
			\overline{a_{j_0}}, &\mbox{ if } n=j_0+1.
		\end{cases}
	$$
		Hence, $\|(J_{a,b}-E)\delta_{j_0} \|_{\ell^2} < 3\e$. By Weyl's Criterion,  we have $E\in\sigma(J_{a,b})$ since $\|\delta_{j_0}\|_{\ell^2}=1$, which contradicts $E\in\rho(J_{a,b})$.
		
		Next, we do the induction: assuming 
		$$
		\inf_{1\le n<N}\inf_{j \in \Z} \|B^E_n(j)\|=c>0,
		$$
		we want to show $\inf_{j \in \Z} \|B^E_N(j)\|>0$. Recall we have $p_N(j,E) = \det (E - J_N(j))$ and 
		$$
		B_N^E(j) = \begin{pmatrix}
			p_N(j,E) & -\overline{a_{j-1}}p_{N-1}(j+1,E) \\
			a_{j+N-1}p_{N-1}(j,E) & -\overline{a_{j-1}}a_{j+N-1}p_{N-2}(j+1,E)
		\end{pmatrix}.
		$$
		By Lemma ~\ref{l.Weyls}, for all $\e>0$, there exists $\delta >0$ such that if $|p_N(j,E)|< \delta$, then there exists $E_0\in\sigma(J_N(j))$  such that $|E-E_0|<\e$. Assume $\inf_{j \in \Z} \|B^E_N(j)\|=0$. Choose $0<\gamma < \max\{\frac{\e^2}{2}, \delta, \frac{1}{M^2}\}$. Then, there exists $j_0 \in \Z$ such that $\|B^E_N(j_0)\|< \gamma^{3N}$ which implies all the following four terms
		$$
			|p_N(j_0,E)|,\ 
			|a_{j_0-1}p_{N-1}(j_0+1,E)|,\ 
			|a_{j_0+N-1}p_{N-1}(j_0,E)|,\mbox{ and }
			$$
			$$
			|a_{j_0-1}a_{j_0+N-1}p_{N-2}(j_0+1,E)|
		$$
		are smaller than $\gamma^{3N}$. We will divide the discussion into three different cases.
		
		Case I. $|a_{j_0+N-1}| < \gamma$, $|a_{j_0-1}| < \gamma$. Since $|P_N(j_0,E)|<
		\gamma^{3N}<\delta$, by Lemma ~\ref{l.Weyls}, there exists $E_0\in\sigma(J_{N}(j_0))$ such that $|E-E_0|< \e$. Choose a unit eigenvector $\vec u=(u_1,\ldots, u_N)$ of  $J_N(j_0)$ for the eigenvalue $E_0$. We define $\phi=(\ldots,0, 0, \vec u, 0, 0,\ldots)$ which is a unit vector in $\ell^{2}(\Z)$. Clearly,
		$(J_{a,b}-E)\phi=(J_{a,b}-E_0)\phi+(E-E_0)\phi$ where $\|(E-E_0)\phi\|<\e$. 
		A direct computation shows that
		$$
		[(J_{a,b}-E_0)\phi]_j=
		\begin{cases}
			0, &j_0\le j\le j_0+N-1,\ j\ge j+N+1, \mbox{ or }j\le j_0-2\\
			a_{j_0-1}u_1, &j=j_0-1\\
			\overline{a_{j_0+N-1}}u_N, &j=j+N,
		\end{cases}
		$$
		which implies $\|(J_{a,b}-E_0)\phi\|<2\gamma<2\e$. Hence, it holds for the unit vector $\phi$ that
	     $$
	     \|(J_{a,b}-E)\phi\|<3\e.
	     $$
		
		Case II. Either $|a_{j_0+N-1}| < \gamma$ and $|a_{j_0-1}|\ge\gamma$; or $|a_{j_0+N-1}|\ge \gamma$ and $|a_{j_0-1}|<\gamma$. Without loss of generality, we focus on the case $|a_{j_0+N-1}| < \gamma$ and $|a_{j_0-1}|\ge\gamma$. First, we have
		$$
		\gamma|p_{N-1}(j_0+1,E)|\le |a_{j_0-1}p_{N-1}(j_0+1,E)|<\gamma^{3N}
		$$
		which implies $|p_{N-1}(j_0+1,E)| < \gamma^{3N-1}$. If $|a_{j_0}|< \sqrt{2\gamma}<\e$, then together with
		$$
		|p_{N-1}(j_0+1,E)| < \gamma^{3N-1}<\delta \mbox{ and }|a_{j_0+N-1}|<\gamma<\e.
		$$ 
		We may apply the same argument as in case I to the operator $J_{N-1}(j_0+1)$ and get a unit vector $\phi\in\ell^2(\Z)$ such that
		$$
		\|(J_{a,b}-E)\phi\|<3\e.
		$$
		If  $|a_{j_0}|\ge \sqrt{2\gamma}$, then it is a standard result that
		$$
		|p_N(j_0,E)|=|(E-b_{j_0})p_{N-1}(j_0+1,E)-|a_{j_0}|^2p_{N-2}(j_0+2,E)| < \gamma^{3N}
		$$
		which implies
       $$
		|a_{j_0}^2p_{N-2}(j_0+2,E)| < \gamma^{3N-2} + \gamma^{3N} < 2\gamma^{3N-2},
       $$
       which in turn implies
		$$
		|p_{N-2}(j_0+2,E)| <\gamma^{3N-3}.
		$$ 
		Repeating this procedure, then either at some step we obtain $\|(J_{a,b}-E)\phi\|<3\e$ for some unit vector $\phi\in\ell^2(\Z)$, or we eventually get 
		$$
		\big|p_2(j_0+N-2,E)|=|(E-b_{j_0+N-2})(E-b_{j_0+N-1}) -|a_{j_0+N-2}|^2\big|<\gamma^9
		$$ 
		and 
		$$
		|p_1(j_0+N-1,E)|=|E-b_{j_0+N-1}| <\gamma^6.
		$$
		So we have that $|E-b_{j_0+N-1}| <\gamma^6<\e$, $|a_{j_0+N-1}| <\gamma<\e$ and $|a_{j_0+N-2}| <\sqrt{2\gamma^5} <\e$, which again yields
		$$
		\|(J_{a,b}-E)\phi\|<3\e.
		$$
		
		Case III. $|a_{j_0+N-1}| \ge \gamma$ and  $|a_{j_0-1}|  \ge \gamma$. In this case, we must have $|p_{N-1}(j_0+1,E)|<\gamma^{3N-1} $, $|p_{N-1}(j_0,E)| <\gamma^{3N-1}$ and $|p_{N-2}(j_0+1,E)| <\gamma^{3N-2}$. Note we have
		$$
		|p_N(j_0,E)| = |(E-b_{j_0})p_{N-1}(j_0+1,E)-|a_{j_0}|^2p_{N-2}(j_0+2,E)| <\gamma^{3N}\mbox{ and }
		$$
		$$
		|p_{N-1}(j_0,E)|= |(E-b_{j_0})p_{N-2}(j_0+1,E)-|a_{j_0}|^2p_{N-3}(j_0+2,E)| <\gamma^{3N-1}.
		$$
		So we must have that 
		$$
		|a_{j_0}^2p_{N-2}(j_0+2,E)| <2\gamma^{3N-2} \mbox{ and } |a_{j_0}^2p_{N-3}(j_0+2,E)| <2\gamma^{3N-3} 
		$$
		If $|a_{j_0}| >\sqrt{2\gamma}$, then $|p_{N-2}(j_0+2,E)| <\gamma^{3N-3} $ and $|p_{N-3}(j_0+2,E)| <\gamma^{3N-4}$. We have
		$$B^E_{N-1}(j_0+1) = \begin{pmatrix}
			p_{N-1}(j_0+1,E) & -\overline{a_{j_0}}p_{N-2}(j_0+2,E) \\
			a_{j_0+N-1}p_{N-2}(j_0+1,E) & -\overline{a_{j_0}}a_{j_0+N-1}p_{N-3}(j_0+2,E)
		\end{pmatrix}
		$$
		 Hence, combine all the estimates above, we obtain the following estimate
		$$
		\|B^E_{N-1}(j_0+1)\|<5\gamma^{3N-5}<5\gamma<5\e,
	   $$
	   which cannot happen if we choose $\e<\frac5c$. Thus we must have $|a_{j_0}|<\sqrt{2\gamma}<\e$. By the same argument as above, we see that $|a_{j_0+N-2}| <\sqrt{2\gamma}<\e$. Recall $|p_{N-2}(j_0+1,E)| <\gamma^{3N-2}<\delta$. Applying the same argument as in case I to $J_{N-2}(j_0+1)$, we again obtain
	   $$
	   \|(J_{a,b}-E)\phi\|<3\e
	   $$
	   for some unit vector $\phi\in\ell^2(\Z)$.
	   
	   Combining all the possible cases, if $\inf_{j\in\Z}B_N(j)=0$, then for all $\e>0$ we can find a unit vector $\phi\in\ell^2(\Z)$ such that
	   $$
	   \|(J_{a,b}-E)\phi\|<3\e,
	   $$
	   which by Weyl's criterion implies $E\in\sigma(J_{a,b})$. This contradicts $E\in\rho(J_{a,b})$, concluding the proof.
	\end{proof} 

\subsection{Estimates on and structure of the Green's Function}\label{ss:green}
	
	Now, to prove that $B^{E} \in \CD\CS$, we only need to construct the two invariant directions and show they satisfy conditions (1)-(3) of Definition~\ref{d.uhsequence}. First, we perform a Combes-Thomas type of estimate \cite{combesthomas} concerning exponential decay of the Green's Function. 
    
    \begin{lemma}\label{l.combes}
    For each $E\in\rho(J_{a,b})$. There exists a positive constant $\gamma=\gamma(\delta,M)>0$  such that 
    \beq\label{eq:green_decay}
    |(J_{a,b}-E)^{-1}(p,q)|\le \frac2\delta e^{-\gamma |p-q|}\mbox{ for all }p,q\in\Z.
    \eeq
    \end{lemma}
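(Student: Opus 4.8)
The plan is to use the classical Combes--Thomas method \cite{combesthomas}: conjugate $J_{a,b}$ by an exponential weight, show the conjugated resolvent is still bounded with an explicit norm bound, and read off the off-diagonal decay of $(J_{a,b}-E)^{-1}(p,q)=\langle\delta_p,(J_{a,b}-E)^{-1}\delta_q\rangle$ from that bound. The only delicate point is that the weight $e^{\eta X}$, with $X$ the position operator, is unbounded, so one cannot naively write the conjugated resolvent as $e^{\eta X}(J_{a,b}-E)^{-1}e^{-\eta X}$; I will get around this by first treating purely imaginary $\eta$, where the weight is unitary, and then using the identity theorem to extend to real $\eta$. Everything else is a Neumann series and bookkeeping of constants, so the imaginary-axis argument is the step that costs the most thought, though it is still light.

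First, for $\eta\in\C$ I would introduce the bounded operator $J_\eta$ on $\ell^2(\Z)$ given by $(J_\eta\psi)(n)=\overline{a_{n-1}}e^{\eta}\psi(n-1)+a_ne^{-\eta}\psi(n+1)+b_n\psi(n)$, so that $J_0=J_{a,b}$ and $\eta\mapsto J_\eta$ is an entire $\CB(\ell^2(\Z))$-valued map. A one-line estimate gives $\|J_\eta-J_{a,b}\|\le M(|e^{\eta}-1|+|e^{-\eta}-1|)\le 4M|\eta|$ whenever $|\eta|\le 1$. Since $\delta=d(E,\sigma(J_{a,b}))\le|E|+\|J_{a,b}\|\le 2M$, the disk $D:=\{\eta\in\C:|\eta|<\delta/(8M)\}$ sits inside $\{|\eta|\le 1\}$ and on it $\|(J_{a,b}-E)^{-1}(J_\eta-J_{a,b})\|<\tfrac12$, using $\|(J_{a,b}-E)^{-1}\|=1/\delta$ from \eqref{eq:inverseOperatorNorm}. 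Hence a Neumann series shows $J_\eta-E$ is invertible for every $\eta\in D$, the resolvent $G_\eta:=(J_\eta-E)^{-1}$ is analytic in $\eta$ on $D$, and $\|G_\eta\|\le\frac{1/\delta}{1-1/2}=2/\delta$.

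Next I would fix $p,q\in\Z$ and compare the two analytic functions on $D$, namely $f(\eta):=\langle\delta_p,G_\eta\delta_q\rangle$ and $g(\eta):=e^{\eta(p-q)}\langle\delta_p,(J_{a,b}-E)^{-1}\delta_q\rangle$. For $\eta=i\theta$ the multiplication operator $U_\theta:\psi(n)\mapsto e^{i\theta n}\psi(n)$ is unitary, and a direct computation gives $J_{i\theta}=U_\theta J_{a,b}U_\theta^{-1}$, hence $G_{i\theta}=U_\theta(J_{a,b}-E)^{-1}U_\theta^{-1}$ and $f(i\theta)=\langle U_\theta^{-1}\delta_p,(J_{a,b}-E)^{-1}U_\theta^{-1}\delta_q\rangle=e^{i\theta(p-q)}\langle\delta_p,(J_{a,b}-E)^{-1}\delta_q\rangle=g(i\theta)$. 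So $f$ and $g$ agree on the segment $D\cap i\R$, which has accumulation points in the connected open set $D$; by the identity theorem $f\equiv g$ on $D$, in particular for real $\eta$.

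Finally I would choose real $\eta=\delta/(16M)\in D$. For $p\ge q$ the previous step gives $|(J_{a,b}-E)^{-1}(p,q)|=e^{-\eta(p-q)}|f(\eta)|\le e^{-\eta(p-q)}\|G_\eta\|\le\tfrac2\delta e^{-\eta(p-q)}$, and for $p\le q$ the same computation with $\eta=-\delta/(16M)$ (or the identity $|(J_{a,b}-E)^{-1}(p,q)|=|(J_{a,b}-E)^{-1}(q,p)|$, which holds because $J_{a,b}$ is self-adjoint) gives the matching bound. Together these yield \eqref{eq:green_decay} with $\gamma=\delta/(16M)$, which depends only on $\delta$ and $M$, as claimed.
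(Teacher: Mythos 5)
Your proof is correct and follows essentially the same Combes--Thomas route as the paper: conjugate $J_{a,b}$ by an exponential weight, control the conjugated resolvent via a Neumann series using $\|(J_{a,b}-E)^{-1}\|=1/\delta$, and read off the decay of the matrix elements, arriving at the same bound $\frac2\delta e^{-\gamma|p-q|}$ with $\gamma$ of order $\delta/M$. The only difference is one of rigor rather than method: the paper conjugates directly by the (unbounded for real $\beta$) multiplication operator $M_\beta$ and evaluates on the finitely supported vectors $\delta_p,\delta_q$, whereas you justify the identity $\langle\delta_p,G_\eta\delta_q\rangle=e^{\eta(p-q)}(J_{a,b}-E)^{-1}(p,q)$ for real $\eta$ via unitary conjugation on the imaginary axis and the identity theorem, which is a cleaner treatment of the same step.
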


    \begin{proof} Define $M_\b$ to be the multiplication operator $(M_\b\phi)_n=e^{\b n}\phi_n$. Without loss of generality, we may assume $|\beta|\le 1$. Let $T$ be the left shift operator. That is, $(T\phi)_n=\phi_{n+1}$. A direct computation shows that:
	\begin{align*}
		(M_{-\b}(J_{a,b}-E)M_\b \phi)_n &=e^{-\b n} (J_{a,b}-E)(e^{\b n} \phi_n)\\
		&= e^{-\b n}(\overline{a_{n-1}}e^{\b(n-1)}\phi_{n-1}+a_{n}e^{\b(n+1)}\phi_{n+1}+(b_n-E)e^{\b n}\phi_{n}) \\
		&= \overline{a_{n-1}}e^{-\b}\phi_{n-1}+a_{n}e^{\b}\phi_{n+1}+(b_n-E)\phi_{n}\\
		&= (J_{a,b}-E)\phi_{n}+a_n(e^\b-1)\phi_{n+1}+\overline{a_{n-1}}(e^{-\b}-1)\phi_{n-1} \\
		&= (J_{a,b}-E)\phi_{n}+a_n(e^\b-1)(T\phi_{n})+\overline{a_{n-1}}(e^{-\b}-1)(T^{-1}\phi_{n}).
	\end{align*}
	Hence, 
	$$
	M_{-\b}(J_{a,b}-E)M_\b =J_{a,b}-E+a_n(e^\b-1)T+\overline{a_{n-1}}(e^{-\b}-1)T^{-1} =J_{a,b}-E+S.
	$$
	 The operator $S$ is bounded on $\ell^2(\Z)$ and it holds for some $C=C(M)>0$ that
	$$
	\|S\|\le |a_n||(e^\b-1)|+|a_{n-1}||(e^{-\b}-1)|\le C|\b|.
	$$
	Clearly, $\|(J_{a,b}-E)^{-1}S\|\le \frac12$ if $|\b|\le \|(J_{a,b}-E)^{-1}\|^{-1}/(2C)=\delta/(2C)$. Then
	$$
	M_{-\b}(J_{a,b}-E)M_\b=J_{a,b}-E+S=(J_{a,b}-E)[I+(J_{a,b}-E)^{-1}S]
	$$
	is invertible. Moreover 
	$$
	(M_{-\b}(J_{a,b}-E)M_\b)^{-1}=M_{-\b}(J_{a,b}-E)^{-1}M_\b=[I+(J_{a,b}-E)^{-1}S]^{-1}(J_{a,b}-E)^{-1},
	$$
	which implies
	$$
	\|M_{-\b}(J_{a,b}-E)^{-1}M_\b\|\le 2\|(J_{a,b}-E)^{-1}\|=\frac2\delta.
	$$
	Hence, it holds for all $p,q\in\Z$ that
	\begin{align*}
		|\langle\delta_p, M_{-\b}(J_{a,b}-E)^{-1}M_\b\delta_q\rangle|&=|\langle M_{-\b}\delta_p, (J_{a,b}-E)^{-1}M_\b\delta_q\rangle|\\
		&=|(J_{a,b}-E)^{-1}(p,q)|e^{-\b(p-q)}\\
		&\le \frac2\delta.
	\end{align*}
	By choosing the sign of $\beta$ appropriately, it clearly holds that for $\gamma=|\b|$ that
	$$
	|(J_{a,b}-E)^{-1}(p,q)|\le \frac2\delta e^{-\gamma|p-q|}
	$$
	which is nothing other than \eqref{eq:green_decay}.
	\end{proof}
	
	We define $g_j(n)=(J_{a,b}-E)^{-1}(n,j)$. By \eqref{eq:green_decay}, it holds that
	\beq\label{eq:decay_g_j}
	|g_j(n)|<\frac2\delta e^{-\gamma|n-j|},\mbox{ for all } n,j\in\Z.
	\eeq
	It is clear that $(g_j(n))_{n\in\Z}$ and $(g_{(j)}(n))_{j\in\Z}$ are the unique solutions of the equations
	\beq\label{eq:g}
	(J_{a,b}-E)g_j=\delta_j\mbox{ and }g_{(\cdot )}(n)(J_{a,b}-E)=\delta_n.
	\eeq
	In other words, we have for each $j\in\Z$ that
	\beq\label{eq:large_g_value1}
	\overline{a_{j-1}}g_{j}(j-1)+a_{j}g_{j}(j+1)+(b_{j}-E)g_{j}(j)=1 \mbox{ and }
	\eeq
	\beq\label{eq:large_g_value2}
	\overline{a_{j}}g_{j+1}(j) +a_{j-1}g_{j-1}(j)+(b_{j}-E)g_{j}(j)=1.
	\eeq
	In particular, we have for all $j\in\Z$ that
	\beq\label{eq:nonzero_g}
	|g_{j}(j-1)|+|g_{j}(j+1)|+|g_{j}(j)|\neq 0\mbox{ and }|g_{j-1}(j)|+|g_{j}(j)|+|g_{j+1}(j)|\neq 0.
	\eeq

	For each $j$, $g_j$ may be obtained by connecting pieces which are related to different solutions of $(J_{a,b}-E)\psi=0$. Such information will be the key to construct the two invariant directions of $B^E$. To get more such information about $g_j$, we divide the discussion into several different cases depending on whether $a_j=0$ or not. We denote by $\ell^2(\Z_\pm)$ all the two-sided infinite sequences who is square summable on $\Z_\pm$, respectively. 
	
	\textbf{Case I.} $a_n \not =0$ for all $n \in \Z$. Fix a $j\in\Z$. By \eqref{eq:nonzero_g}, $|g_{j}(j-1)|+|g_{j}(j+1)|+|g_{j}(j)|\neq 0$. We further subdivide it into three different cases. 
	
\textit{Case I.a.} $g_j(j)\neq 0$ or $g_j(j-1)g_j(j+1)\neq 0$. If we use $\binom{g_j(j)}{g_j(j-1)}$ as an initial condition, then we obtain get a nonzero solution  $\phi^u(n)$ of $J_{a,b} \phi=E \phi$ that coincides with $g_j(n)$ for $n\le  j$. Hence, $\phi^u(n) \in \ell^2(\Z_-)$. If we use $\binom{g_{j}(j+1)}{g_{j}(j)}$ as an initial condition, then we obtain a nonzero solution of  $\phi^s(n)$ of $J_{a,b} \phi=E \phi$ that coincides with $g_j(n)$ for $n\ge j$. Hence, $\phi^s(n) \in \ell^2(\Z_+)$. 
	
\textit{Case I.b.} $g_j(j-1)\neq 0$ and $g_j(j)=g_j(j+1)=0$. First, we have a nonzero solution $\phi^u_j\in\ell^2(\Z_-)$ that is generated by $\binom{g_j(j)}{g_j(j-1)}$. We claim that in this case we must have 
$$
|g_{j-1}(j)|+|g_{j-1}(j+1)|\neq0.
$$
Indeed, if $g_{j-1}(j)=g_{j-1}(j+1)=0$. Then $g_{j}(j)=g_{j-1}(j)=0$ and \eqref{eq:large_g_value2} together imply $g_{j+1}(j)\neq 0$. Hence $\binom{g_{j+1}(j)}{g_{j+1}(j-1)}$ is linearly independent with $\binom{g_j(j)}{g_j(j-1)}=\binom{0}{g_j(j-1)}$ which can generate another $\phi^u_{j+1}\in\ell^2(\Z_-)$. It is clear that  $\phi^u_j$ and $\phi^u_{j+1}$ are linear independent. This is impossible since 
	 \begin{align}
	\nonumber  \left|\det \begin{pmatrix}\phi^u_j(j)  & \phi^u_{j+1}(j)\\ \phi^u_j(j-1)  & \phi^u_{j+1}(j-1)\end{pmatrix}\right|&=\left|\det \left(A_{j-n}(n)\begin{pmatrix}\phi^u_j(j-n)  & \phi^u_{j+1}(j-n)\\ \phi^u_j(j-n-1)  & \phi^u_{j+1}(j-n-1)\end{pmatrix}\right)\right|\\
	 \nonumber &=|\det A_{n-j}(n)|\cdot\left|\det\begin{pmatrix}\phi^u_j(j-n)  & \phi^u_{j+1}(j-n)\\ \phi^u_j(j-n-1)  & \phi^u_{j+1}(j-n-1)\end{pmatrix}\right|\\
	 \label{eq:1dim_sol_space}&=\frac{|a_{j-n}|}{|a_{j-1}|}\left|\det\begin{pmatrix}\phi^u_j(j-n)  & \phi^u_{j+1}(j-n)\\ \phi^u_j(j-n-1)  & \phi^u_{j+1}(j-n-1)\end{pmatrix}\right|\\
	\nonumber  &\to 0\mbox{ as }n\to \infty
	 \end{align}
    while the first term is a fixed positive number, a contradiction. Thus, we may use $\binom{g_{j-1}(j+1)}{g_{j-1}(j)}$ to generate a nonzero solution $\phi^s$ of $J_{a,b}\phi=E\phi$ that coincides with $g_{j-1}(n)$ for all $n\ge j$. Hence, we must have $\phi^s\in \ell^2(\Z_+)$. 
  
  \textit{Case I.c.} $g_j(j+1)\neq 0$ and $g_j(j)=g_j(j-1)=0$. It is basically the dual case of case 1.b. First, we may construct a $\phi^s_j\in\ell^2(\Z_+)$, solution of $J_{a,b}\phi=E\phi$, via $\binom{g_j(j+1)}{g_j(j)}$. Then we must have
  $$
  |g_{j+1}(j)|+|g_{j+1}(j-1)|\neq0.
  $$
   Otherwise, $g_{j+1}(j)=g_j(j)=0$ and \eqref{eq:large_g_value2} together imply $g_{j-1}(j)\neq 0$. Thus $\binom{g_{j-1}(j+1)}{g_{j-1}(j)}$ can be used to generate another solution $\phi^s_{j-1}\in\ell^2(\Z_+)$ of $J_{a,b}\phi=E\phi$ that is linearly independent of $\phi^s_{j}$. This is again impossible since
   
  \begin{align}
  	\nonumber  \left|\frac{a_{j+1}}{a_n}\right|\cdot \left|\det \begin{pmatrix}\phi^s_{j-1}(j+1)  & \phi^s_{j}(j+1)&\\ \phi^s_{j-1}(j)  & \phi^s_{j}(j)\end{pmatrix}\right|&=\left|\det \left(A_{n-j-1}(j+1)\begin{pmatrix}\phi^s_{j-1}(j+1)  & \phi^s_{j}(j+1)&\\ \phi^s_{j-1}(j)  & \phi^s_{j}(j)\end{pmatrix}\right)\right|\\
    \label{eq:1dim_sol_space2}
  	&=\left|\det \begin{pmatrix}\phi^s_{j-1}(n)  & \phi^s_{j}(n)\\ \phi^s_{j-1}(n-1)  & \phi^s_{j}(n-1)\end{pmatrix}\right|\\
  	\nonumber  &\to 0\mbox{ as }n\to \infty
  \end{align}
while the first term in this chain of equations can be bounded below by a fixed positive number. Thus we may use $\binom{g_{j+1}(j)}{g_{j+1}(j-1)}$ to generate a nonzero solution $\phi^u\in\ell^2(\Z_-)$ of $J_{a,b}\phi=E\phi$.
    
    To sum up, in case I, we always obtain two nonzero solutions $\phi^s\in\ell^2(\Z_+)$ and $\phi^u\in\ell^2(\Z_-)$ of $J_{a,b}\phi=E\phi$. It is clear that $\phi^s$ and $\phi^u$ must be linear independent. Otherwise, they both decay exponentially as $n\to\pm\infty$ which implies they are eigenvectors which constradicts $E\in\rho(J_{a,b})$. Since $\{\phi: J_{a,b}\phi=E\phi\}$ is a two dimensional space, we have for all $j\in\Z$
    \beq\label{eq:gj_and_phi_1}
    g_j(n)= \begin{cases}\phi_j^s(n), & n\ge j,\\
    	\phi_j^u(n), & n\le j\end{cases}
    \eeq
	where $\phi^{s(u)}_j\in\mathrm{span}\{\phi^{s(u)}\}$, respectively.

	\textbf{Case II}. There is a $j_0$ so that $a_{j_0}= 0$ and $a_n\neq 0$ for all $n<j_0$. We consider the operator $J_{(-,j_0]}$. We may fix any $j<j_0$. We again divide the discussion into three different cases.
	
	\textit{Case II.a.} $g_j(j)\neq 0$ or $g_j(j-1)g_j(j+1)\neq 0$, then similar to case I.a, we may first use the vector $\binom{g_j(j)}{g_j(j-1)}$ to generate a vector $(\phi^-(j))_{j\le j_0}$ such that
	\beq \label{list}
		[(J_{(-,j_0]}-E)\phi^-](j)=0 \mbox{ for all }j<j_0\mbox{ and } \phi^{-}\in\ell^2(\Z_-).
	\eeq
Note by the same computation as in \eqref{eq:1dim_sol_space}, the set of all such $\phi^-$ is a one-dimension space. Moreover, $\phi^-$ cannot be a solution of $J_{(-,j_0]}\phi=E\phi$ or else we can find a $\ell^2$-solution of $J_{a,b}\phi=E\phi$ via $\phi=(\phi_-,0,0,\ldots,)$. In other words, we must have
 \beq\label{eq:nonzero_boundary1}
\overline{a_{j_0-1}} \phi^-(j_0-1) + (E-b_{j_0})\phi^-(j_0) \not =0.
\eeq
Next, we can use $\binom{g_j(j+1)}{g_j(j)}$ to generate a solution $(\phi^s(j))_{j\le j_0}$ of $J_{(-,j_0]}\phi=E\phi$. Note the set of such vectors form a one-dimensional space as well since they are uniquely determined by $\phi^s(j_0)$. Note that it must hold $\phi^s\notin \ell^2((-\infty,j_0])$. Otherwise, we can again construct a nontrivial $\ell^2$-solution of $J_{a,b}\phi=E\phi$.

\textit{Case II.b.} $g_j(j)=g_j(j+1)=0$ and $g_j(j-1)\neq 0$ , then we can again use the similar argument of case I.b. First, we can first obtain a nontrivial $\phi^-\in \ell^2(\Z_-)$ via $\binom{g_j(j)}{g_j(j-1)}$ which satisfies \eqref{list}. Then the same argument of case I.b would imply $\binom{g_{j-1}(j+1)}{g_{j-1}(j)}\neq \vec 0$ which can be used to generate a nontrivial $\phi^s$ solving $J_{(-\infty,j_0]}\phi=E\phi$.

\textit{Case II.c.} $g_j(j)=g_j(j-1)=0$ and $g_j(j+1)\neq 0$, then we can first use $\binom{g_j(j+1)}{g_j(j)}$ to generate a solution $\phi^s$ of $J_{(-\infty,j_0]}\phi=E\phi$. Then we can show that 
$$
|g_{j+1}(j)|+|g_{j+1}(j-1)|\neq 0.
$$
Indeed, otherwise we have $g_{j+1}(j)=g_j(j)=0$ which together with \eqref{eq:large_g_value2} imply $g_{j-1}(j)\neq 0$. Then $\binom{g_{j-1}(j+1)}{g_{j-1}(j)}$ can be used to generate another solution of $J_{(-\infty,j_0]}\phi=E\phi$ which is linearly independent with $\phi^s$. This is impossible since the solution space of $J_{(-\infty,j_0]}\phi=E\phi$ is one-dimensional. Hence $\binom{g_{j+1}(j)}{g_{j+1}(j-1)}$ can be used to generate a nonzero $\phi^-$ as described in \eqref{list}.

To summarize, in case 2, we always obtain a nonzero $\phi^s$ solving $J_{(-,j_0]}\phi^s=E\phi^s$ and a nonzero $\phi^-$ as described in \eqref{list}.  Now for each $j\in\Z$, we consider $g_j(n)$ for $n\le j_0$. If $j>j_0$, then we must have 
\beq\label{eq:gj_and_phi_2}
g_j(n)=0 \mbox{ for all } n\le j_0.
\eeq
Otherwise, $((g_j(n))_{n\le j_0}, 0,0,\ldots,)$ will be nonzero $\ell^2$-solution of $J_{a,b}\phi=E\phi$. If $j\le j_0$, then we have 
	\beq\label{eq:gj_and_phi_3}
g_{j}(n)= \begin{cases}\phi^-_j(n), & n\le j,\\
	\phi_{j}^s(n), & j\le n\le {j_0}.
\end{cases}
\eeq
where $\phi_j^s\in\mathrm{span}\{\phi^s\}$ and $\phi_j^-\in\mathrm{span}\{\phi^-\}$.

\textbf{Case III.} There is a $j_0$ such that $a_{j_0}=0$ and $a_n\neq 0$ for all $n>j_0$. We consider the operator $J_{(j_0,+)}$. We fix a $j>j_0+1$. The discussion is completely analagou to case II. Roughly speaking, depending on which of $\{g_j(j-1), g_j(j), g_j(j+1)\}$ is nonzero, we can have three different cases:
\begin{enumerate}
	\item[a.] $g_j(j)\neq 0$ or  $g_j(j-1) g_j(j+1)\neq 0$;
	\item[b.] $g_j(j)=g_{j}(j+1)=0$ and $g_j(j-1)\neq 0$;
	\item[c.] $g_j(j)=g_{j}(j-1)=0$ and $g_j(j+1)\neq 0$.
\end{enumerate}
In all these cases, we can obtain a nonzero solution $(\phi^u(j))_{j>j_0}$ which solves $J_{(j_0,\infty)}\phi=E\phi$ via $\binom{g_p(j)}{g_p(j-1)}$ where $p=j$ or $j+1$. Such solution form a one-dimensional space as they can be uniquely determined by $\phi^s(j_0+1)$.  Moreover, we can obtain a nonzero $(\phi^+(j))_{j>j_0}$ via $\binom{g_q(j+1)}{g_q(j)}$ where $q=j$ or $j-1$ such that
\beq \label{list2}
[(J_{(j_0,+)}-E)\phi^+](j)=0 \mbox{ for all }j>j_0+1\mbox{ and } \phi^{+}\in\ell^2(\Z_+).
\eeq
By the computation of \eqref{eq:1dim_sol_space2}, we obtain the set of all such $\phi^+$ form a one-dimensional space. Moreover, $\phi^+$ cannot be a solution of $J_{(j_0,+)}\phi=E\phi$. Otherwise, we can construct a $\ell^2$-solution $\phi=(\ldots, 0, 0,(\phi^{+}(j))_{n>j_0}$ of $J_{a,b}\phi=E\phi$. That is, we must have 
\beq\label{eq:nonzero_boundary2}
a_{j_0+2}\phi^+(j_0+2)+(b_{j_0+1}-E)\phi^+(j_0+1)\neq 0.
\eeq 
Then for each $j\in\Z$, we consider $g_j(n)$ for $n>j_0$. If $j\le j_0$, we must have
\beq\label{eq:gj_and_phi_4}
g_j(n)=0 \mbox{ for all } n>j_0.
\eeq
If $j>j_0$, then we have 
\beq\label{eq:gj_and_phi_5}
g_{j}(n)= \begin{cases}\phi^+_j(n), & n\ge j,\\
	\phi_{j}^u(n), & j_0<n\le j.
\end{cases}
\eeq

\textbf{Case IV.} There is a $j_1<j_2$ such that $a_{j_1}=a_{j_2}=0$ and $a_n\neq 0$ for all $j_1<n<j_2$ if such $n$ exists. Then we consider the operator $J_{(j_1,j_2]}$. It is clear that in this case, we have 
$$
(J_{a,b}-E)^{-1}=(J_{(-,j_1]}-E)^{-1}\oplus (J_{(j_1,j_2]}-E)^{-1}\oplus (J_{(j_2,+)}-E)^{-1}
$$
 where $(J_{(j_1,j_2]}-E)^{-1}$ is a finite matrix of order $j_2-j_1$. Then we have the following subcases.
 
 \textit{Case IV.a.} $j_2-j_1=1$, then it clearly holds that 
 $$
 (J_{(j_1,j_2]}-E)^{-1}=\frac1{b_{j_2}-E}.
 $$ 

 \textit{Case IV.b.} $j_2=j_1+2$, then it is clear that $(J_{(j_1,j_2]}-E)^{-1}$ is a $2\times 2$ bounded, invertible matrix which may be written as 
$$
\begin{pmatrix} g_{j_1+1}(j_1+1) & g_{j_1+2}(j_1+1)\\ g_{j_1+1}(j_1+2)& g_{j_1+2}(j_1+2)\end{pmatrix}.
$$

It is clear what are $g_j(n)$ for $j_1<n\le j_2$ and for all $j$ in the two cases above.

 \textit{Case IV.c} If $j_2>j_1+2$, then we want to explore a bit more the structure of $(J_{(j_1,j_2]}-E)^{-1}$. It is clear that if we start with 
$$
a_{j_1+1}\phi(j_1+2)+(b_{j_1+1}-E)\phi(j_1+1)=0,
$$
then we can find a nonzero vector $(\phi^{(1)}(j))_{j\in(j_1,j_2]}$ such that $[(J_{(j_1,j_2]}-E)\phi^{(1)}](j)=0 $ for all $j_1<j<j_2$ and
\beq\label{eq:nonzero_boundary3}
 \overline{a_{j_2-1}}\phi^{(1)}(j_2-1)+(b_{j_2}-E)\phi^{(1)}(j_2)\neq 0.
\eeq
The set of such $\phi^{(1)}$ is clearly a one-dimensional space. Similarly, if we start with 
$$
\overline{a_{j_2-1}}\phi(j_2-1)+(b_{j_2}-E)\phi(j_2)=0,
$$
then we can obtain a nonzero vector $(\phi^{(2)}(j))_{j\in(j_1,j_2]}$ such that
$[(J_{(j_1,j_2]}-E)\phi^{(2)}](j)=0 $ for all $j_1+1<j\le j_2$ and
\beq\label{eq:nonzero_boundary4}
 a_{j_1+1}\phi^{(2)}(j_1+2)+(b_{j_1+1}-E)\phi^{(2)}(j_1+1)\neq 0.
\eeq
The set of such $\phi^{(2)}$ is a one-dimensional space. Thus we obtain the following information of $g_j$. If $j\le j_1$ and $j>j_2$, then it is clear that
$$
g_j(n)=0\mbox{ for all } j_1<n\le j_2.
$$
If $j_1<j\le j_2$, then it holds that 
\beq\label{eq:gj_and_phi_6}
g_{j}(n)= \begin{cases}\phi^{(2)}_j(n), & j\le n\le j_2,\\
	\phi_{j}^{(1)}(n), & j_1<n\le j.
\end{cases}
\eeq
where $\phi^{(i)}_j\in\mathrm{span}\{\phi^{(i)}\}$ for $i=1,2$. 

It is clear that cases I-IV together describe $g_j(n)$ for all $j, n\in\Z$. 

\subsection{Construction of the dominated splitting}\label{ss:construction_inv}

We begin this section by the following estimates which is the consequence of the classfications in Section~\ref{ss:green}. 

	\begin{lemma}\label{l:large_su_norm}
		There exists a $C=C(\delta,M)>0$ such that for all $j\in\Z$, it holds that
		\beq\label{eq:large_s_norm}
		\max\big\{|g_{j-1}(j)|,\ |g_{j-1}(j-1)|,\ |g_{j-2}(j)|,\ |g_{j-2}(j-1)|\big\}>C^{-1} \mbox{ and }
		\eeq
			\beq\label{eq:large_u_norm}
		\max\left\{|g_{j}(j)|,\ |g_{j}(j-1)|,\ |g_{j+1}(j)|,\ |g_{j+1}(j-1)|\right\}>C^{-1}.
		\eeq
		\end{lemma}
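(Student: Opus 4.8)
The plan is to read off both inequalities from the two ``diagonal'' identities \eqref{eq:large_g_value1} and \eqref{eq:large_g_value2} for the Green's function, using the classification of Section~\ref{ss:green} only to convert the single ``wrong'' term appearing in each identity into one of the entries listed in \eqref{eq:large_u_norm}, resp.\ \eqref{eq:large_s_norm}.

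\emph{Step 1: a symmetry of the Green's function.} I would first extract from Section~\ref{ss:green} the fact that $|g_m(n)|=|g_n(m)|$ for all $m,n\in\Z$. When $a_n\neq 0$ for all $n$ (Case I of Section~\ref{ss:green}), $g_j$ has by \eqref{eq:gj_and_phi_1} the form $g_j(n)=c_j\phi^u(n)$ for $n\le j$ and $g_j(n)=d_j\phi^s(n)$ for $n\ge j$; matching the two pieces at $n=j$ forces $(d_j,c_j)\parallel(\phi^u(j),\phi^s(j))$, and feeding the piecewise form into \eqref{eq:large_g_value1} together with the solution identity $\overline{a_{j-1}}\phi^u(j-1)+(b_j-E)\phi^u(j)=-a_j\phi^u(j+1)$ pins down the common scalar, giving
$$g_j(n)=\frac{\phi^u(\min(n,j))\,\phi^s(\max(n,j))}{\hat W(j)},\qquad \hat W(j)=a_j\bigl(\phi^u(j)\phi^s(j+1)-\phi^s(j)\phi^u(j+1)\bigr).$$
The relation $\hat W(j)=\tfrac{\overline{a_{j-1}}}{a_{j-1}}\,\hat W(j-1)$ — the same computation that underlies \eqref{eq:1dim_sol_space} — shows $|\hat W(j)|\equiv W_0>0$, so $|g_m(n)|=|\phi^u(\min(m,n))|\,|\phi^s(\max(m,n))|/W_0$, which is manifestly symmetric in $m,n$. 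In the cases with zeros of $a$ one argues block by block: on each block $\phi^u,\phi^s$ are replaced by the two boundary solutions produced in Cases II--IV of Section~\ref{ss:green}, and $g_m(n)=0$ whenever $m,n$ lie in different blocks of the direct sum decomposition; so the symmetry holds in all cases. (Alternatively, one may bypass Section~\ref{ss:green} here and simply note that $J_{a,b}$ is conjugate by a diagonal unitary to $J_{|a|,b}$, whose resolvent is a complex \emph{symmetric} matrix, so $|g_m(n)|=|g_n(m)|$ at once.)

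\emph{Step 2: the two estimates.} For \eqref{eq:large_u_norm}, \eqref{eq:large_g_value1} and the bounds $|a_{j-1}|,|a_j|,|b_j-E|<M$ give
$$1=\bigl|\overline{a_{j-1}}g_j(j-1)+a_jg_j(j+1)+(b_j-E)g_j(j)\bigr|\le M\bigl(|g_j(j-1)|+|g_j(j+1)|+|g_j(j)|\bigr);$$
replacing $|g_j(j+1)|$ by $|g_{j+1}(j)|$ via Step~1 shows that one of $|g_j(j-1)|,|g_j(j)|,|g_{j+1}(j)|$ exceeds $(3M)^{-1}$, and all three occur among the four entries of \eqref{eq:large_u_norm}. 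For \eqref{eq:large_s_norm} I would instead use \eqref{eq:large_g_value2} at the index $j-1$:
$$1=\bigl|\overline{a_{j-1}}g_j(j-1)+a_{j-2}g_{j-2}(j-1)+(b_{j-1}-E)g_{j-1}(j-1)\bigr|\le M\bigl(|g_j(j-1)|+|g_{j-2}(j-1)|+|g_{j-1}(j-1)|\bigr),$$
and now $|g_j(j-1)|=|g_{j-1}(j)|$, so one of $|g_{j-1}(j)|,|g_{j-2}(j-1)|,|g_{j-1}(j-1)|$ exceeds $(3M)^{-1}$, and all three occur among the entries of \eqref{eq:large_s_norm}. Thus $C=3M$ works; in particular $C=C(\delta,M)$ as claimed.

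\emph{Main difficulty.} The only real content is the symmetry $|g_m(n)|=|g_n(m)|$ of Step~1; everything else is a two-line manipulation of \eqref{eq:large_g_value1}--\eqref{eq:large_g_value2}. The one point that needs a little care is the block bookkeeping: the ``wrong'' term in each identity always involves two \emph{adjacent} indices, and when the entry of $a$ joining them vanishes that term is weighted by exactly that entry (hence equals $0$) and simply drops out of \eqref{eq:large_g_value1}, resp.\ \eqref{eq:large_g_value2}; otherwise the two indices lie in a common block and the product formula of Step~1 applies, so the symmetry is available precisely when it is needed.
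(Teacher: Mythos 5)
Your argument is correct, but it is genuinely different from the paper's. The paper proves \eqref{eq:large_s_norm} by contradiction: assuming all four entries are smaller than $\e$, it extracts from \eqref{eq:large_g_value3}--\eqref{eq:large_g_value4} that $|a_{j-2}g_{j-1}(j-2)|$ and $|a_{j-1}g_j(j-1)|$ exceed $\tfrac12$, shows via a determinant estimate (using the Combes--Thomas bound $|g|\le 2/\delta$) that the vectors $\binom{g_{j-1}(j-1)}{g_{j-1}(j-2)}$ and $\binom{g_j(j-1)}{g_j(j-2)}$ are linearly independent, and then contradicts Corollary~\ref{c.classificaiton_on_g}, which forces the solutions they generate to lie in a common one-dimensional space. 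You instead establish the symmetry $|g_m(n)|=|g_n(m)|$ and read both bounds directly off \eqref{eq:large_g_value1} and \eqref{eq:large_g_value2}. The symmetry itself is never stated in the paper, but it is true: your unitary-conjugation argument is the cleanest route (gauge $a$ to $|a|$ by a diagonal unitary, note that $J_{|a|,b}-E$ equals its transpose, hence so does its inverse), and it sidesteps entirely the block bookkeeping of your first derivation; the Wronskian route also works, with the caveat that the paper's own computation of $W(k+1)=W(k)$ actually yields $W(k+1)=\tfrac{\overline{a_k}}{a_k}W(k)$ (constant modulus, which is all either of you needs). Your approach buys an explicit constant $C=3M$ independent of $\delta$ and eliminates the case analysis over the vanishing pattern of $a$; the paper's approach avoids introducing the symmetry but pays for it with the contradiction scheme and a $\delta$-dependent constant. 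One small point worth making explicit if you write this up: since $\vec s(j)$ and $\vec u(j)$ are later built from whichever pair of entries realizes the max, the only property used downstream is exactly the stated lower bound, so nothing else from the paper's proof needs to be salvaged.
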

	\begin{proof}
We focus on the proof of \eqref{eq:large_s_norm} as the proof of \eqref{eq:large_u_norm} is completely analogous. By \eqref{eq:large_g_value1} and \eqref{eq:large_g_value2}, we have for all $j\in\Z$ that
\beq\label{eq:large_g_value3}
	\overline{a_{j-2}}g_{j-1}(j-2)+a_{j-1}g_{j-1}(j)+(b_{j-1}-E)g_{j-1}(j-1)=1\mbox{ and }
\eeq
\beq\label{eq:large_g_value4}
	\overline{a_{j-1}}g_{j}(j-1) +a_{j-2}g_{j-2}(j-1)+(b_{j-1}-E)g_{j-1}(j-1)=1.
\eeq
	Suppose \eqref{eq:large_s_norm} is not true. Then we may choose $\e>0$ small and $j\in\Z$ so that
	$$
	\max\big\{|g_{j-1}(j)|,\ |g_{j-1}(j-1)|,\ |g_{j-2}(j)|,\ |g_{j-2}(j-1)|\big\}<\e,
	$$
which together with \eqref{eq:large_g_value3} and \eqref{eq:large_g_value4} implies
	$$
	|\overline{a_{j-2}}g_{j-1}(j-2)|>\frac12\mbox{ and } |\overline{a_{j-1}}g_{j}(j-1)|>\frac12.
	$$
	In particular, there is a $C=C(\delta,M)>0$ so that  
	$$
	\min\big\{|a_{j-2}|,\ |a_{j-1}|,\ |g_{j-1}(j-2)|,\ |g_{j}(j-1)|\big\}>C^{-1}.
	$$ 
	If we choose $\e>0$ small enough, then
	\begin{align*}
	\left|\det\begin{pmatrix}g_{j-1}(j-1) & g_j(j-1)\\ g_{j-1}(j-2) & g_j(j-2)\end{pmatrix}\right|&=|g_{j-1}(j-1)g_{j}(j-2)- g_j(j-1)g_{j-1}(j-2)|\\
	&\ge |g_j(j-1)g_{j-1}(j-2)|-|g_{j-1}(j-1)g_{j}(j-2)|\\
	&\ge C^{-2}-\e \frac2\delta\\
	&>0.
	\end{align*}
	Since $a_{j-1}a_{j-2}\neq 0$, we may use $\binom{g_{p}(j-1)}{g_{p}(j-2)}$ as an initial condition to generate a $\phi_{p}$ for $p=j$ or $j-1$ where $\phi_j$ and $\phi_{j-1}$ are linearly independent by the estimate above.
	
	On the other hand, if $a_n\neq 0$ for all $n<j-1$, then we are in case I or II of Section~\ref{ss:green}. Thus, we have either $\phi_p=\phi^u_{p}$ in case I or $\phi_p=\phi^-_{p}$ in case II. If there is a $j_1<j-2$ such that $a_{j_1}=0$ and $a_{n}\neq 0$ for all $j_1\le n\le j-1$, then we are in case III or IV.c of Section~\ref{ss:green}. Thus, we have either $\phi_p=\phi^u_{p}$ in case III or $\phi_p=\phi^{(1)}_p$ in case IV.c. However, in all these cases $\phi_j$ and $\phi_{j-1}$ belong to a same one-dimensional space which implies they must be linear dependent, a contradiction. 
 	\end{proof}
 
We define for each $j\in\Z$ a pair of vectors
	$$
	\vec s(j):=
	\begin{cases}\dbinom{g_{j-1}(j)}{g_{j-1}(j-1)} &\mbox{ if }|g_{j-1}(j)|+|g_{j-1}(j-1)|>\frac{C^{-1}}2;\\
		\dbinom{g_{j-2}(j)}{g_{j-2}(j-1)}  &\mbox{ otherwise}
		\end{cases}
	$$
and
	$$
	\vec u(j):=
	\begin{cases}
		\dbinom{g_{j}(j)}{g_{j}(j-1)} &\mbox{ if } |g_{j}(j)|+|g_{j}(j-1)|>\frac {C^{-1}}2;\\
		\dbinom{g_{j+1}(j)}{g_{j+1}(j-1)} &\mbox{ otherwise. }
		\end{cases}
	$$
By \eqref{eq:large_s_norm} and \eqref{eq:large_u_norm} and by choosing $C=C(M,\delta)>0$ appropriately, we have
	\beq\label{eq:large_inv_vec}
	C^{-1}<\|\vec s(j)\|<C \mbox{ and } C^{-1}< \|\vec u(j)\|<C\mbox{ for all }j\in\Z.
	\eeq
	Note by \eqref{eq:large_g_value3}, it holds that
	\beq\label{eq:s_choose_j-1}
	\vec s(j)=\binom{g_{j-1}(j)}{g_{j-1}(j-1)} \mbox{ if } a_{j-2}=0.
	\eeq 
	Similarly, by $\overline{a_{j-1}}g_j(j-1)+a_jg_j(j+1)+(b_j-E)g_j(j)=1$, we must have
	\beq\label{eq:u_choose_j}
	\vec u(j)=\binom{g_{j}(j)}{g_{j}(j-1)} \mbox{ if } a_{j}=0.
	\eeq 
	
Thus for each $j\in\Z$, we may define two one-dimensional subspaces of $\C^2$ as 
$$
E^s(j)=\mathrm{span}\{\vec s(j)\} \in E^s(j) \mbox{ and }E^u(j)=\mathrm{span}\{\vec u(j)\}.
$$

Since cases I-IV of Section~\ref{ss:green} describe $g_j(n)$ for all $j, n\in\Z$, we can in pariticular have the following corollary which will be heavily used in the remain part of this section. 

\begin{corollary}\label{c.classificaiton_on_g}
	Fix a $j\in\Z$ so that $a_{j-1}a_j\neq 0$. We define
	$$
	j_1=\max\{n<j-1: a_n=0\}\mbox{ and } j_2=\min\{n\ge j:a_n=0\}.
	$$
Then, it holds that $g_p(n)=\phi^\a_p(n)$ for all $p\le n< j_2+1$ and $g_q(n)=\phi^\b_q(n)$ for all $j_1<n\le q$ where $q=j-1$ or $j-2$, $q=j$ or $j+1$, and the choices of $\a$ and $\b$ are as follows:
\begin{enumerate}
	\item $j_1=-\infty$ and $j_2=\infty$: case I of Section~\ref{ss:green} which implies $\a=s$ and $\b=u$. 
	\item $j_1=-\infty$ and $j_2<\infty$: case II of Section~\ref{ss:green} which implies $\a=s$ and $\b=-$. 
	\item $j_1>-\infty$ and $j_2=\infty$: case III of Section~\ref{ss:green} which implies $\a=+$ and $\b=u$. 
	\item  $j_1>-\infty$ and $j_2<\infty$: case IV.c of Section~\ref{ss:green} which implies $\a=(2)$ and $\b=(1)$. 
	\end{enumerate}
Moreover, if $j_1=j-2$, then by \eqref{eq:s_choose_j-1} we have $p=j-1$. So it still holds that $g_p(n)=\phi^\a_p(n)$ for all $p\le n< j_2+1$. Similarly, if $j_2=j$, then \eqref{eq:u_choose_j} implies $q=j$ and $g_q(n)=\phi^\b_q(n)$ for all $j_1<n\le q$ .
\end{corollary}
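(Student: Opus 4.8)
The plan is to read off Corollary~\ref{c.classificaiton_on_g} as a bookkeeping consequence of the four-case analysis of Section~\ref{ss:green}: the four sign patterns of $(j_1,j_2)$ in the statement are nothing but Cases~I--IV of that section, reorganized around the fixed index $j$ rather than around the indices where $a$ vanishes.

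First I would record the elementary combinatorial facts that follow from $a_{j-1}a_j\neq 0$ together with the definitions $j_1=\max\{n<j-1:a_n=0\}$ and $j_2=\min\{n\ge j:a_n=0\}$: one has $j_1\le j-2$ and $j_2\ge j+1$, moreover $a_n\neq 0$ for every integer $n$ with $j_1<n<j_2$, and $a_{j_1}=0$ when $j_1>-\infty$ while $a_{j_2}=0$ when $j_2<\infty$. Thus the maximal block of consecutive integers on which $a$ does not vanish that contains $j-1$ and $j$ is exactly $\{n:j_1<n<j_2\}$, and in particular $j-1$, $j$, $j+1$ all lie in the half-open interval $(j_1,j_2]$, as does $j-2$ except when $j_1=j-2$. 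Since $\vec s(j)$ is built from $g_{j-1}$ or $g_{j-2}$ (evaluated at sites $j-1$ and $j$) and $\vec u(j)$ from $g_j$ or $g_{j+1}$ (evaluated at sites $j-1$ and $j$), every Green's function index $m$ actually entering these definitions lies in $(j_1,j_2]$ --- with the one caveat that when $j_1=j-2$ the index $g_{j-2}$ would give the zero vector, and here \eqref{eq:s_choose_j-1} guarantees that $\vec s(j)=\binom{g_{j-1}(j)}{g_{j-1}(j-1)}$ instead; the symmetric caveat on the $\vec u$ side coming from \eqref{eq:u_choose_j} does not arise since $a_j\neq 0$ forces $j_2>j$.

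Next I would match the sign patterns to the cases of Section~\ref{ss:green}: $j_1=-\infty$, $j_2=\infty$ is Case~I; $j_1=-\infty$, $j_2<\infty$ is Case~II with $j_0=j_2$; $j_1>-\infty$, $j_2=\infty$ is Case~III with $j_0=j_1$; and $j_1>-\infty$, $j_2<\infty$ is Case~IV applied to $J_{(j_1,j_2]}$, which is subcase~IV.c because $j_2-j_1\ge 3$. In each case the corresponding description of the Green's function --- namely \eqref{eq:gj_and_phi_1}, \eqref{eq:gj_and_phi_3}, \eqref{eq:gj_and_phi_5}, and \eqref{eq:gj_and_phi_6} respectively --- already asserts, for \emph{every} index $m$ in the block, that $g_m(n)=\phi^\a_m(n)$ for $m\le n<j_2+1$ and $g_m(n)=\phi^\b_m(n)$ for $j_1<n\le m$, where the pair $(\a,\b)$ is $(s,u)$, $(s,-)$, $(+,u)$, $((2),(1))$ in Cases~I--IV. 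Applying this with $m=p$, the index appearing in $\vec s(j)$, and with $m=q$, the index appearing in $\vec u(j)$, yields exactly the identities claimed in the corollary, since by the previous paragraph these indices lie in $(j_1,j_2]$.

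The point that I expect to require the most care --- the main, if mild, obstacle --- is the behaviour at the two ends of the block. When $j_1=j-2$, the index $j-2$ lies outside $(j_1,j_2]$ and the block description cannot be invoked for $g_{j-2}$; here \eqref{eq:s_choose_j-1} forces $p=j-1\in(j_1,j_2]$ and the argument goes through unchanged, which is precisely the additional remark about $j_1=j-2$ in the corollary. Symmetrically, when $q=j+1$ happens to equal $j_2$ (so $a_{j_2}=0$ and $j+1$ is a block boundary, a situation occurring only in Cases~II and~IV), one must verify that $g_{j+1}$ restricted to $(j_1,j_2]$ still coincides with the canonical left solution $\phi^\b_{j+1}$; this is immediate because $\delta_{j+1}=\delta_{j_2}$ then belongs to the summand $J_{(-,j_0]}$, respectively $J_{(j_1,j_2]}$, of the direct-sum decomposition of $J_{a,b}$ used in those cases, so $g_{j+1}$ is supported on $(j_1,j_2]$ and the relevant one of \eqref{eq:gj_and_phi_3}, \eqref{eq:gj_and_phi_6} gives $g_{j+1}(n)=\phi^\b_{j+1}(n)$ for all $n\in(j_1,j_2]$, in particular at the sites $j-1$ and $j$ used in $\vec u(j)$. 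Together with the facts, already established in Section~\ref{ss:green}, that each $\phi^\a$ and $\phi^\b$ is a well-defined nonzero solution on the appropriate half-line, this completes the proof.
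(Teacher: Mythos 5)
Your proposal is correct and follows exactly the route the paper intends: the paper offers no separate proof, simply asserting that the corollary is a bookkeeping consequence of Cases I--IV of Section~\ref{ss:green}, and your write-up supplies precisely that bookkeeping (the identification of $(j_1,j_2)$ sign patterns with the four cases, the verification that $p,q\in(j_1,j_2]$, and the boundary caveats via \eqref{eq:s_choose_j-1} and \eqref{eq:u_choose_j}). Your observation that the $j_2=j$ caveat is vacuous under $a_j\neq 0$ and your careful treatment of the endpoint $q=j+1=j_2$ are both accurate refinements of what the paper leaves implicit.
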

	Now, we first show the invariance of $E^s$ and $E^u$.
	\begin{lemma}\label{l.Binvariance}
		$E^s$ and $E^u$ are $B^E$--invariant. In other words, for all $j\in\Z$, it holds that
		$$
		B^E(j)\cdot E^u(j) = E^u(j+1) \mbox{ and } B^E(j)\cdot E^s(j) \subseteq E^s(j+1).
		$$
	\end{lemma}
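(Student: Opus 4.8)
The plan is to fix $j\in\Z$ and argue by cases on whether $a_{j-1}$ and $a_j$ vanish. The only computation used throughout is the transfer relation behind \eqref{eq:SchrCocycle1}: if $\phi$ solves $J_{a,b}\phi=E\phi$ (or the relevant Dirichlet restriction) at an index $n$, then $B^E(n)\binom{\phi(n)}{\phi(n-1)}=a_n\binom{\phi(n+1)}{\phi(n)}$; applied to $\phi=g_m$ at any $n\neq m$ this reads $B^E(n)\binom{g_m(n)}{g_m(n-1)}=a_n\binom{g_m(n+1)}{g_m(n)}$. In the generic case $a_{j-1}a_j\neq0$, put $j_1=\max\{n<j-1:a_n=0\}$ and $j_2=\min\{n\ge j:a_n=0\}$ (with $\pm\infty$ allowed); then $j_1\le j-2$ and $j_2\ge j+1$, so $j$ and $j+1$ determine the same block $(j_1,j_2]$, and Corollary~\ref{c.classificaiton_on_g} (together with the edge cases recorded there via \eqref{eq:s_choose_j-1} and \eqref{eq:u_choose_j}) shows that $g_{j-1},g_{j-2}$ agree on $\{j-1,j\}$ with scalar multiples of one fixed solution $\phi^\a$ of the block's equation, while $g_j,g_{j+1}$ agree on $\{j-1,j,j+1\}$ with scalar multiples of a second fixed solution $\phi^\b$. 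Hence, whichever branch is selected in the definitions, $\vec s(j),\vec s(j+1)$ are nonzero multiples of $\binom{\phi^\a(j)}{\phi^\a(j-1)},\binom{\phi^\a(j+1)}{\phi^\a(j)}$ and $\vec u(j),\vec u(j+1)$ of $\binom{\phi^\b(j)}{\phi^\b(j-1)},\binom{\phi^\b(j+1)}{\phi^\b(j)}$, the non-vanishing being guaranteed by \eqref{eq:large_inv_vec}; and since $j_1<j<j_2$, both $\phi^\a$ and $\phi^\b$ solve the equation at the index $j$ itself. Plugging $\phi^\a,\phi^\b$ into the transfer relation and dividing by $a_j\neq0$ gives $B^E(j)[E^s(j)]=E^s(j+1)$ and $B^E(j)[E^u(j)]=E^u(j+1)$.

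For the boundary cases one argues directly. If $a_j=0$, then \eqref{eq:u_choose_j} forces $\vec u(j)=\binom{g_j(j)}{g_j(j-1)}$, and \eqref{eq:large_g_value1} (with $a_j=0$) gives $B^E(j)\vec u(j)=\binom{(E-b_j)g_j(j)-\overline{a_{j-1}}g_j(j-1)}{0}=\binom{-1}{0}$; also $a_j=0$ makes $(J_{a,b}-E)^{-1}$ block-diagonal across the bond between $j$ and $j+1$, so $g_q(j)=(J_{a,b}-E)^{-1}(j,q)=0$ for $q\in\{j+1,j+2\}$, whence $E^u(j+1)=\mathrm{span}\{\binom{1}{0}\}=B^E(j)[E^u(j)]$; and because $\vec s(j)=\binom{g_q(j)}{g_q(j-1)}$ with $q<j$, the transfer relation gives $B^E(j)\vec s(j)=a_j\binom{g_q(j+1)}{g_q(j)}=\vec0\subseteq E^s(j+1)$. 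If $a_{j-1}=0$ and $a_j\neq0$, block-diagonality across the bond between $j-1$ and $j$ forces $g_p(j-1)=0$ for $p\in\{j,j+1\}$ and $g_q(j)=0$ for $q\in\{j-1,j-2\}$, so $\vec u(j)$ is a nonzero multiple of $\binom{1}{0}$ and $\vec s(j)$ a nonzero multiple of $\binom{0}{1}$; then $B^E(j)\binom{0}{1}=\vec0\subseteq E^s(j+1)$ and $B^E(j)\binom{1}{0}=\binom{E-b_j}{a_j}$, while applying the transfer relation to $g_r$, $r\in\{j+1,j+2\}$, at the index $j$ (and using $g_r(j-1)=0$) shows $\binom{g_r(j+1)}{g_r(j)}$ is a nonzero multiple of $\binom{E-b_j}{a_j}$, so $E^u(j+1)=\mathrm{span}\{\binom{E-b_j}{a_j}\}=B^E(j)[E^u(j)]$. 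The subcase $a_{j-1}=a_j=0$ is subsumed in the $a_j=0$ discussion above.

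The step I expect to be the main obstacle is the bookkeeping buried in the generic case: that the block $(j_1,j_2]$ and the solutions $\phi^\a,\phi^\b$ delivered by Corollary~\ref{c.classificaiton_on_g} are literally unchanged when one passes from $j$ to $j+1$, that every branch in the definitions of $\vec s$ and $\vec u$ at both indices lands inside this block (which is where \eqref{eq:s_choose_j-1} and \eqref{eq:u_choose_j} are needed, to rule out the degenerate $j_1=j-2$ or $j_2=j+1$ branches), and that $j$ is strictly interior to the block so that $\phi^\a,\phi^\b$ genuinely satisfy the equation there. Once $j_1$ and $j_2$ are tracked carefully under the shift $j\mapsto j+1$ when $a_{j-1}a_j\neq0$, all of this follows; by contrast the boundary cases reduce to the short direct computations above.
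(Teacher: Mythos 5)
Your proposal is correct and follows essentially the same route as the paper: the generic case $a_{j-1}a_j\neq 0$ via Corollary~\ref{c.classificaiton_on_g} and the transfer relation applied to the block solutions $\phi^\a,\phi^\b$, and the degenerate cases via the block-diagonal splitting of $(J_{a,b}-E)^{-1}$ together with \eqref{eq:large_g_value1}, \eqref{eq:s_choose_j-1}, and \eqref{eq:u_choose_j} (the paper organizes these as cases 1--3 for $E^s$ and $E^u$ separately, but the computations coincide). Your sign $\binom{-1}{0}$ for $B^E(j)\vec u(j)$ when $a_j=0$ is the correct one; only the span matters, so this is immaterial.
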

	\begin{proof}
		Fix a $j\in\Z$. First, we show $E^s(j)\subseteq E^s(j+1)$. It suffices to show $B^E(j)\vec s(j)\in E^s(j+1)$. We divide the proof into the following three cases.
		
		\textit{Case 1.s}. Assume $a_{j-1}a_j \neq0$.  By Corollary~\ref{c.classificaiton_on_g}, it holds that
		$$
		B^E(j)\vec s(j)=\binom{\phi_p^\a(j)}{\phi_p^\a(j-1)}=a_j\binom{\phi_p^\a(j+1)}{\phi^\a_p(j)}\in E^s(j+1)
		$$

		\textit{Case 2.s}. Assume $a_{j-1}=0$. Then $J_{a,b}=J_{(-,j-1]}\oplus J_{[j,+)}$. Thus we always have $g_p(j)=0$ for $p=j$ or $j-1$. In other words, we have
		$$
		B^E(j)\vec s(j)=\begin{pmatrix}E-b_j & 0\\ a_j & 0\end{pmatrix}\binom{0}{g_p(j-1)}=\binom{0}{0}\in E^s(j+1).
		$$
		
	    \textit{Case 3.s}. Assume $a_{j}=0$. Then it always true that $\overline{a_{j-1}}g_p(j-1)+(E-b_j)g_p(j)=0$ for $p=j-1$ or $j-2$. Thus, we have 
	    $$
	    B^E(j)\vec s(j)=\begin{pmatrix}E-b_j & -\overline{a_{j-1}}\\ 0& 0\end{pmatrix}\binom{g_p(j)}{g_p(j-1)}=\binom{0}{0}\in E^s(j+1).
	    $$
		
Next, we show $B^E(j)E^u(j)=E^u(j+1)$. Clearly, it suffices to show $\binom{0}{0}\neq B^E(j)\vec u(j)\in E^u(j+1)$. We again divide the proof into three different cases.

        \textit{Case 1.u}. Assume $a_{j-1}a_{j}\neq 0$. Then by Corollary~\ref{c.classificaiton_on_g}, it holds that
        $$
        B^E(j)\vec u(j)=\binom{\phi_q^\b(j)}{\phi_q^\b(j-1)}=a_j\binom{\phi_q^\b(j+1)}{\phi^\b_q(j)}\in E^u(j+1).
        $$
	  It is clear that $B^E(j)\vec u(j)\neq \binom{0}{0}$ since $\det(B^E(j))\neq0$ and $\vec u(j)\neq\binom{0}{0}$.
	  
	  \textit{Case 2.u}. Assume $a_{j}=0$. Then by \eqref{eq:u_choose_j}, we must have $\vec u(j)=\binom{g_j(j)}{g_j(j-1)}$. 
	  $$
	  B^E(j)\vec u(j)=\begin{pmatrix}E-b_j & -\overline{a_{j-1}}\\ 0 & 0\end{pmatrix}\binom{g_j(j)}{g_j(j-1)}=\binom{-\overline{a_{j-1}}g_j(j-1)+(E-b_j)g_j(j)}{0}=\binom{1}{0}.
	  $$
	 On the other hand,$J_{a,b}=J_{(-,j]}\oplus J_{(j,+)}$ implies $g_m(j)=0$ for $m>j$. Hence, we have 
	 $$
	\vec u(j+1)= \binom{g_{q}(j+1)}{g_q(j)}=\binom{g_{q}(j+1)}{0}\neq\vec 0
	 $$
	  where $q=j+1$ or $j+2$. Thus, it holds that $\vec 0\neq B^E(j)\vec u(j)\in E^u(j+1)$.
	  
	  \textit{Case 3.u}. Assume $a_{j-1}=0$ and $a_j\neq 0$. First, $J_{a,b}=J_{(-,j-1]}\oplus J_{[j,+)}$ implies $g_m(j-1)=0$ for $m\ge j$. Since $a_j\neq 0$, we must have
	  $$
	  B^E(j)\vec u(j)=\begin{pmatrix}E-b_j & 0\\ a_j & 0\end{pmatrix}\binom{g_q(j)}{0}=\binom{(E-b_j)g_q(j)}{a_jg_q(j)}=g_p(j)\binom{E-b_j}{a_j}\neq 0.
	  $$
	  On the other hand, we have $\vec u(j+1)=\binom{g_m(j+1)}{g_m(j)}$ for $m=j+1$ or $j+2$ which must satisfy $[(J_{a,b}-E)g_m](j)=0$, i.e.
	  $$
     a_jg_m(j+1)+(b_j-E)g_m(j)=0.
	  $$
	  Hence $\vec u(j+1)=\binom{g_m(j+1)}{g_m(j)}\neq \vec 0$ is linearly dependent with $\binom{E-b_j}{a_j}$ as well. Thus, we again have $B^E(j)\vec u(j)\in E^u(j+1)$.
	
	 In all the four different cases, we always obtain
	   $$
	   B^E(j)E^u(j)=E^u(j+1)\mbox{ and } B^E(j)E^s(j)\subseteq E^s(j+1),
	   $$
	   as desired.
	\end{proof}
	
	\begin{remark}\label{r.particular_su}
		The proof of Lemma~\ref{l.Binvariance} actually gives more information for $E^{s(u)}$ when $a_{j_0}=0$ for some $j_0\in\Z$. More precisely, cases 2.s and 3.s  imply
		\beq\label{eq:particular_s1}
		B^E(j)E^s(j)=\{\vec 0\} \mbox{ for } j=j_0, j_0+1.
		\eeq
		 Moreover, cases 2.s and 2.u imply
		 \beq\label{eq:paritcular_s2}
		 E^s(j_0+1)=\mathrm{span}\left\{\binom{0}{1}\right\}\mbox{ and } E^{u}(j_0+1)=\mathrm{span}\left\{\binom{1}{0}\right\}.
		 \eeq
		
		\end{remark}

	\begin{lemma}\label{l.Bdomination}
		There exists $N =N(M,\delta)\in \Z_{+}$ such that 
		$$\|B^E_{N}(j)\vec v_u(j)\|>2 \|B^E_N(j)\vec v_s(j)\|$$
		for all $j\in\Z$ and all unit vectors $\vec v_{s(u)}(j)\in E^{s(u)}(j)$. In particular, $E^s(j)\neq E^u(j)$ for all $j\in \Z$. 
	\end{lemma}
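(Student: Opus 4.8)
The plan is to exploit the exponential decay of the Green's function together with the structure results from Section~\ref{ss:green} to show that the vector $\vec s(j)$ produces a solution which is square-summable towards $+\infty$ (hence decaying along a subsequence), while $\vec u(j)$ produces a solution which is square-summable towards $-\infty$, and that these behaviors persist under the cocycle iteration to give a uniform gap after $N$ steps.

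First I would unwind the definitions: by Corollary~\ref{c.classificaiton_on_g}, for a fixed $j$ with $a_{j-1}a_j\neq0$ the vector $\binom{g_p(n)}{g_p(n-1)}$ for $n$ in the relevant range comes from a fixed one-dimensional space of ``right'' solutions $\phi^\alpha$ (which is $\phi^s$, $\phi^-$, or $\phi^{(2)}$ depending on the case), and similarly $\vec u(j)$ comes from a fixed one-dimensional space of ``left'' solutions $\phi^\beta$. Using \eqref{eq:SchrCocycle1} one gets $B^E_n(j)\vec s(j)=\big(\prod_{i=j}^{j+n-1}a_i\big)\binom{\phi^\alpha_p(j+n)}{\phi^\alpha_p(j+n-1)}$ as long as no $a_i$ vanishes; when some $a_i$ does vanish the same expression degenerates appropriately (cf.\ Remark~\ref{r.particular_su}, where $B^E(j)E^s(j)=\{\vec0\}$ near a zero of $a$), and one has to handle those boundary cases separately but they are in fact easier since $B^E_N(j)\vec s(j)=\vec0$ for $N$ large enough there. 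The key quantitative input is the Combes--Thomas estimate \eqref{eq:decay_g_j}: $|g_j(n)|\le\frac2\delta e^{-\gamma|n-j|}$. This forces $\|B^E_n(j)\vec s(j)\|$ to be controlled, via the comparison of $\phi^\alpha_p$ with $g_p$ on the range $p\le\cdot<j_2+1$, by something like $|\prod_{i=j}^{j+n-1}a_i|\cdot\frac2\delta e^{-\gamma n}$ times a factor coming from $\|\vec s(j)\|^{-1}$; on the other hand, using $\inf_{j}\|B^E_n(j)\|>0$ from Lemma~\ref{l.lowboundbn} together with the invariance and the fact (Lemma~\ref{l:large_su_norm}) that both $\vec s$ and $\vec u$ have norm bounded above and below, one argues $\|B^E_n(j)\vec u(j)\|$ cannot decay: more precisely, since $\det B^E_n(j)=\prod a_i\overline{a_{i-1}}$, writing the matrix in the basis $(\vec u(j),\vec s(j))$ (whose determinant is bounded below once condition (3) is available — but that is what we are proving, so instead one uses $\|B^E_n(j)\|\ge\|B^E_n(j)\vec u(j)\|$ plus a norm lower bound) shows that if $\|B^E_n(j)\vec u(j)\|$ were small then $\|B^E_n(j)\|$ would be small, contradicting Lemma~\ref{l.lowboundbn}. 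Combining: $\|B^E_N(j)\vec s(j)\|\le C e^{-\gamma N}|\prod a_i|$ while $\|B^E_N(j)\vec u(j)\|\ge c\,|\prod a_i\overline{a_{i-1}}|^{1/2}$ (from the determinant split), and choosing $N=N(M,\delta)$ large enough makes the ratio exceed $2$, uniformly in $j$.

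The main obstacle I expect is controlling $\|B^E_N(j)\vec u(j)\|$ from below \emph{uniformly} in $j$ without already knowing the angle separation (condition (3)), since the determinant identity only lower-bounds the product $\|B^E_N(j)\vec u(j)\|\cdot\|B^E_N(j)\vec s(j)\|\cdot|\det(\vec u(j{+}N),\vec s(j{+}N))|$ and the last factor is a priori only bounded above. To get around this I would instead argue by contradiction along a subsequence $j_k$: if $\|B^E_{N}(j_k)\vec u(j_k)\|/\|B^E_N(j_k)\vec s(j_k)\|$ stayed bounded for all $N$, then combined with the upper bound on $\|B^E_N(j_k)\vec s(j_k)\|$ coming from Combes--Thomas one gets $\|B^E_N(j_k)\|\to0$ as $N\to\infty$ along a suitable diagonal choice (since $\vec u,\vec s$ span $\C^2$ with norms bounded below), contradicting $\inf_j\|B^E_N(j)\|>0$ from Lemma~\ref{l.lowboundbn}; here one must be careful that the relevant range $j_1<n\le q$ and $p\le n<j_2+1$ from Corollary~\ref{c.classificaiton_on_g} actually contains $j+N$, which requires separately treating the finitely-many-$a_i$-zeros cases — but in case IV.c where $j_2-j_1$ is bounded one uses Remark~\ref{r.particular_su} and the operator decomposes into a finite block where $B^E_N(j)\vec s(j)$ or $B^E_N(j)\vec u(j)$ literally vanishes, so the inequality is trivial once $N$ exceeds the block size. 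Finally, the statement $E^s(j)\neq E^u(j)$ is an immediate consequence: if they coincided then $B^E_N(j)\vec u(j)$ and $B^E_N(j)\vec s(j)$ would be parallel, contradicting the strict inequality $\|B^E_N(j)\vec v_u(j)\|>2\|B^E_N(j)\vec v_s(j)\|$ applied to equal unit vectors.
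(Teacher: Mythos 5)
Your setup is right — the identification of $\vec s(j)$ and $\vec u(j)$ with the solution families $\phi^\alpha_p$, $\phi^\beta_q$ via Corollary~\ref{c.classificaiton_on_g}, the formula $B^E_n(j)\vec s(j)=\big(\prod_{i=j}^{j+n-1}a_i\big)\binom{g_p(j+n)}{g_p(j+n-1)}$, and the treatment of the windows containing a zero of $a$ (where $B^E_N(j)\vec s(j)=\vec 0$ while $B^E_N(j)\vec u(j)\neq\vec 0$ by invariance) all match the paper. But the core estimate has a genuine gap, and you have in fact located it yourself: you cannot lower-bound $\|B^E_N(j)\vec u(j)\|$ (or deduce $\|B^E_N(j)\|$ is small from the smallness of $B^E_N(j)\vec u(j)$ and $B^E_N(j)\vec s(j)$) without a uniform lower bound on $|\det(\vec u(j),\vec s(j))|$, which is exactly condition (3), proved only in the next lemma. ``Norms bounded below'' does not help: two unit vectors that are nearly parallel span $\C^2$ with huge coordinates, so $\|B^E_N(j)\|$ is not controlled by $\max\{\|B^E_N(j)\vec u(j)\|,\|B^E_N(j)\vec s(j)\|\}$. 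Moreover, your Combes--Thomas bound $\|B^E_N(j)\vec s(j)\|\le C e^{-\gamma N}\big|\prod a_i\big|$ does not tend to $0$: the product $\big|\prod_{i=j}^{j+N-1}a_i\big|$ can be as large as $M^N$, so the contradiction ``$\|B^E_N(j_k)\|\to 0$'' never materializes. The proposed lower bound $\|B^E_N(j)\vec u(j)\|\ge c\,|\prod a_i\overline{a_{i-1}}|^{1/2}$ from ``the determinant split'' also silently uses the angle bound, and is anyway useless when the $a_i$ are small.

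The idea you are missing is that one should never estimate $\|B^E_N(j)\vec u(j)\|$ and $\|B^E_N(j)\vec s(j)\|$ separately, but only their \emph{ratio}, and represent the unit vector in $E^u(j)$ as a normalized \emph{pullback}: $\vec v_u(j)=B^E_{-N}(j+N)\vec u(j+N)\big/\|B^E_{-N}(j+N)\vec u(j+N)\|$, so that
\begin{equation*}
\|B^E_N(j)\vec v_u(j)\|=\frac{\|\vec u(j+N)\|}{\|B^E_{-N}(j+N)\vec u(j+N)\|},\qquad B^E_{-N}(j+N)\vec u(j+N)=\Big(\prod_{i=j}^{j+N-1}a_i^{-1}\Big)\binom{g_q(j)}{g_q(j-1)}
\end{equation*}
with $q=j+N$ or $j+N+1$. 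In the quotient $\|B^E_N(j)\vec v_u(j)\|/\|B^E_N(j)\vec v_s(j)\|$ the two products of $a_i$ cancel exactly, the numerator $\|\vec u(j+N)\|\,\|\vec s(j)\|$ is bounded below by $C^{-2}$ via Lemma~\ref{l:large_su_norm}, and the denominator $\big\|\binom{g_q(j)}{g_q(j-1)}\big\|\cdot\big\|\binom{g_p(j+N)}{g_p(j+N-1)}\big\|$ is $\le C\delta^{-2}e^{-2\gamma N}$ because \emph{both} factors are Green's function entries at distance about $N$ from the diagonal. This gives a ratio $\ge c\,e^{2\gamma N}$ with constants depending only on $M$ and $\delta$, with no reference to the angle between $E^u$ and $E^s$. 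As written, your argument is circular at the decisive step.
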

	\begin{proof}
		Assume for some $j\in\Z$ and $n\in\Z_+$ that $a_i \not =0$ for all $j-1\le i<j+n$. Then we always have $a_{j-1}a_j\neq 0$. Thus by Corollary~\ref{c.classificaiton_on_g}, it holds that $\vec s(j)=\binom{g_p(j)}{g_p(j-1)}$ and $g_p(i)=\phi^\a_p(i)$ for all $j-1\le i\le j+n$. Here $p=j-1$ or $j-2$. Then
		$$
		A^E_k(j)\binom{\phi^\a_p(j)}{\phi^\a_p(j-1)}=\binom{\phi^\a_p(j+k)}{\phi^\a_p(j+k-1)}\mbox{ for all }1\le k\le n.
		$$
       It implies
		\begin{align*}
		\| B^E_n(j)\vec s(j)\|&= \left\| B^E_n(j)\binom{g_p(j)}{g_p(j-1)}\right\|\\
		&=\left(\prod_{i = j}^{j+n-1} a_{i} \right)\left \|\binom{g_{p}(j+n)}{g_{p}(j+n-1)} \right \|\\
		&=\| B^E_n(j)\vec v_s(j)\| \| \vec s(j)\|
		\end{align*}
	where we set $\vec v_s(j)=\frac{\vec s(j)}{\|\vec s(j)\|}$ is a unit vector in $E^s(j)$. 
	
	Similarly, $a_{j+n-2}a_{j+n-1}\neq 0$ implies $\vec u(j+n)=\binom{g_q(j+n)}{g_q(j+n-1)}$ and $g_{q}(i)=\phi^\b_q(i)$ for all $j-1\le i\le q$. Here $q=j+n$ or $j+n+1$. Note if $a_{j+n}=0$, then \eqref{eq:u_choose_j} implies $q=j+n$. Hence, the relation between $g_q$ and $\phi^\b_q$ is still true. Then
	$$
	A^E_{-k}(j+n)\binom{\phi^\b_q(j+n)}{\phi^\b_q(j+n-1)}=\binom{\phi^\b_q(j+n-k)}{\phi^\b_q(j+n-k-1)}\mbox{ for all }1\le k\le n.
	$$
    It implies
		\begin{align*} 
			\| B^E_n(j)\vec v_u(j)\|&=\frac{\| B^E_n(j) B^E_{-n}(j+n)\vec u(j+n)\|}{\| B^E_{-n}(j+n)\vec u(j+n)\|}\\
			&=\frac{\| \vec u(j+n)\|}{\| B^E_{-n}(j+n)\vec u(j+n)\|}\\
			&=\frac{\| \vec u(j+n)\|}{\left(\prod_{i = j+n-1}^{j} \frac{1}{a_{i}} \right)\left \|\binom{g_{q}(j)}{g_{q}(j-1)} \right \|},
		\end{align*}
		where $\vec v_u(j)=\frac{B^E_{-n}(j+n)\vec u(j+n)}{\| B^E_{-n}(j+n)\vec u(j+n)\|}$ is a unit vector in $E^u(j)$. Combine the two estimates above, for all such $j$ and $n$, we obtain
		\begin{align*}
			\frac{\| B^E_n(j)\vec v_u(j)\|}{\| B^E_n(j)\vec v_su(j)\|} &=\frac{\| \vec u(j+n)\| \| \vec s(j)\|}{\left(\prod_{i = j+n-1}^{j} \frac{1}{a_{i}} \right)\left\|\binom{g_{q}(j)}{g_{q}(j-1)} \right \| \left(\prod_{i = j}^{j+n-1} a_{i} \right)\left \|\binom{g_{p}(j+n)}{g_{p}(j+n-1)} \right \|}\\
			& \ge \frac{4\| \vec u(j+n)\| \| \vec s(j)\|}{\delta^2 e^{-2 \gamma n} (1+e^{-2 \gamma})} \\
			& \ge \frac{2e^{2 \gamma n} }{C^2\delta^2}\\
			&>2,
		\end{align*}
	    provided we choose $n\ge \frac{\log(C\delta)}{\gamma}$. We may just set $N=\lceil \frac{\log(C\delta)}{\gamma}\rceil$, which depends only on $\delta=d(E,\sigma(J_{a,b}))$ and $M$. Then for all $j\in\Z$ such that $a_i\neq 0$ for all $j-1\le i< j+N$, it holds
	    $$
	    \| B^E_N(j)\vec v_u(j)\|>2\| B^E_N(j)\vec v_s(j)\|.
	    $$
	    
	Next, we consider $j$ where $a_i=0$ for some $j-1\le i<j+N$. First, we note that $B^E_N(j)\vec v_u(j)\neq \vec 0$ since Lemma~\ref{l.Binvariance} says $B^E(i)E^u(i)=E^u(i+1)$ for all $i\in\Z$. By \eqref{eq:particular_s1}, $a_i=0$ implies that
	$$
	B^E(i)\vec s(i)=B^E(i+1)\vec s(i+1)=\vec 0.
	$$
	In particular, if $i=j-1$, then $B^E(j)\vec s(j)=0$ which implies 
	$$
	B^E_N(j)\vec s(j)=B^E_{N-1}(j+1)B^E(j)\vec s(j)=\vec 0.
	$$
	If $i\ge j$, then $B^E_{i-j}(j)\vec s(j)\in E^s(i)$ which implies
	$$
    B^E_N(j)\vec s(j)=B^E_{N-i+j-1}(i+1)B^E(i)B^E_{i-j}(j)\vec s(j)=\vec 0.
	$$
	In any case, we have $B^E_N(j)\vec v_s(j)=B^E_N(j)\vec s(j)=\vec 0$ which implies for such $j$ that
	$$
	\|B^E_N(j)\vec v_u(j)\|>0=2\|B^E_N(j)\vec v_s(j)\|,
	$$
	concluding the proof.
	\end{proof}
	
   Now, the only thing left to show is that the distance between invariant directions of $B^E(j)$ is uniformly bounded away from zero for all $j \in \Z$.

	\begin{lemma}\label{l.separationofdir}
		Let $E\in\rho(J_{a,b})$. Let $E^u(j)$ and $E^s(j)$ be two invariant directions of $B^E(j)$.
		Then, there exists $\eta=\eta(\delta,M)>0$ such that
		$$
		d(E^u(j),E^s(j))> \eta \mbox{ for all }j\in\Z.
		$$
	\end{lemma}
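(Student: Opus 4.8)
The plan is to convert the claim into a uniform lower bound on the determinant $|\det(\vec u(j),\vec s(j))|$ and then read that bound off from the Green's-function analysis of Section~\ref{ss:green}. Since \eqref{eq:sphere_dist_vform} gives $d(E^u(j),E^s(j))=2|\det(\vec u(j),\vec s(j))|/(\|\vec u(j)\|\,\|\vec s(j)\|)$, the bounds \eqref{eq:large_inv_vec} reduce everything to finding $c=c(\delta,M)>0$ with $|\det(\vec u(j),\vec s(j))|>c$ for all $j\in\Z$. The sites adjacent to a zero of $a$ are disposed of first: if $a_{j-1}=0$ then $j=j_0+1$ for $j_0=j-1$, and \eqref{eq:paritcular_s2} gives $E^s(j)=\mathrm{span}\{\binom01\}$, $E^u(j)=\mathrm{span}\{\binom10\}$, whence $d(E^u(j),E^s(j))=2$; if $a_j=0$ but $a_{j-1}\neq0$, then $J_{a,b}$ decouples at $j$ and, by \eqref{eq:u_choose_j} and \eqref{eq:s_choose_j-1}, $\vec u(j)$ and $\vec s(j)$ are built entirely from the half-line operator $J_{(-,j]}$, to which the bulk argument below applies verbatim. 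So it remains to treat $a_{j-1}a_j\neq0$.

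In that case Corollary~\ref{c.classificaiton_on_g} places us in one of the four cases of Section~\ref{ss:green}: $\vec u(j)=\binom{g_p(j)}{g_p(j-1)}$ for some $p\in\{j,j+1\}$ and $\vec s(j)=\binom{g_q(j)}{g_q(j-1)}$ for some $q\in\{j-2,j-1\}$, and on the maximal block $(j_1,j_2)$ of nonvanishing $a$ containing $\{j-1,j\}$ the columns $g_p$ and $g_q$ agree with scalar multiples of the two linearly independent fundamental solutions attached to that case. The algebraic heart is the discrete Wronskian: for $q\le n<p$ (a window of length $\le 3$ along which $a_n\neq0$ by the choice rule) the quantity $a_n\bigl(g_q(n+1)g_p(n)-g_p(n+1)g_q(n)\bigr)$ has $n$-independent modulus, which at the left endpoint of the window equals $|g_q(p)|$ and at $n=j-1$ equals $|a_{j-1}|\,|\det(\vec u(j),\vec s(j))|$. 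Hence
\[
|a_{j-1}|\,|\det(\vec u(j),\vec s(j))| = |g_q(p)| = |g_p(q)|,
\]
and, using the ``$=1$'' relations \eqref{eq:large_g_value1}--\eqref{eq:large_g_value4} once more, the scalars linking the $g$'s to the fundamental solutions are pinned down by the common Wronskian modulus. So the whole matter reduces to the uniform estimate $|g_q(p)|\ge c\,|a_{j-1}|$.

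To obtain this I would run a short case analysis on the ``$=1$'' relations. By \eqref{eq:large_g_value1} and \eqref{eq:large_g_value2}, at every site $m$ at least one of $|a_{m-1}g_m(m-1)|$, $|a_mg_m(m+1)|$, $|(b_m-E)g_m(m)|$ exceeds $1/3$; applied at $m=j$, this already settles, with $c=c(M)$, the branches where $|a_{j-1}g_j(j-1)|\ge1/3$ or $|a_{j-1}g_{j-1}(j)|\ge1/3$. In the remaining branches one instead knows that a definite component of $\vec u(j)$ or of $\vec s(j)$ (chosen so that $\|\vec u(j)\|,\|\vec s(j)\|>C^{-1}$) has size $\gtrsim C^{-1}$, and one converts that into a lower bound for $|g_q(p)|$ by comparing Green's-function values across the length-$\le 3$ window $[q,p]$; this comparison uses the Combes--Thomas decay \eqref{eq:decay_g_j} together with Lemma~\ref{l.lowboundbn} ($\inf_j\|B^E_n(j)\|>0$ for each fixed $n$) to control how fast the fundamental solutions can grow or shrink on that window. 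Optimizing the constants yields $\eta=\eta(\delta,M)>0$.

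The main obstacle is precisely this last estimate. The dangerous configuration is the one in which $E^u(j)$ and $E^s(j)$ are simultaneously very close to a single coordinate direction, say $\binom10$ (so that $|g_p(j-1)|\ll|g_p(j)|$ and $|g_q(j-1)|\ll|g_q(j)|$); propagating such near-coincidence through the cocycle $B^E$ would force the two fundamental solutions to decay at the \emph{same} end of $\Z$, hence to be proportional and square summable, contradicting $E\in\rho(J_{a,b})$. Quantifying this uniformly in $j$ is delicate because the transfer matrices $A^E(n)=\frac1{a_n}B^E(n)$ may be arbitrarily badly conditioned where $|a_n|$ is small, so crude transfer-matrix norm bounds have to be replaced by the combined use of \eqref{eq:decay_g_j} and Lemma~\ref{l.lowboundbn}; carrying this through all the subcases is the technical core of the proof.
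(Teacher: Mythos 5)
There is a genuine gap at the quantitative heart of the argument. Your Wronskian identity $|a_{j-1}|\,|\det(\vec u(j),\vec s(j))|=|g_q(p)|$ is correct (it is exactly the statement $W=W(j-1)=a_{j-1}\det(\vec s(j),\vec u(j))$ together with $g_q(p)=\phi^\b_q(p)=W$ coming from \eqref{eq:green_wronskian}), but it is only a restatement of the lemma: a uniform lower bound on an \emph{off-diagonal} Green's-function entry is precisely as hard as the separation estimate itself, and no such bound holds in general. The tools you propose for the ``remaining branches'' cannot produce it: the Combes--Thomas bound \eqref{eq:decay_g_j} is an \emph{upper} bound, and Lemma~\ref{l.lowboundbn} bounds $\|B^E_n(j)\|$ from below, i.e.\ the maximum of four entries, which says nothing about the particular entry $g_q(p)$. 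Even your ``easy'' branch is incomplete: $|a_{j-1}g_j(j-1)|\ge 1/3$ gives $|g_j(j-1)|\ge (3M)^{-1}$ and hence forces $q=j$, but it does not force $p=j-1$, so $g_q(p)$ may be $g_j(j-2)$, about which you know nothing. You yourself concede that ``carrying this through all the subcases is the technical core of the proof,'' which is to say the proof is not there.

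The missing mechanism is an \emph{upper} bound, not a lower one: from $g_k(k)=\frac1W\phi_p^\a(k)\phi_q^\b(k)$ and the resolvent bound $|g_k(k)|\le\|(J_{a,b}-E)^{-1}\|=\delta^{-1}$ one gets
$$
|\phi_p^\a(k)\phi_q^\b(k)|\le \frac{M}{\delta}\,|\det(\vec s(j),\vec u(j))|\qquad\text{for }k=j-1,\ j,
$$
which are \eqref{eq:large_det1}--\eqref{eq:large_det2}. After that the argument is purely algebraic: by \eqref{eq:large_inv_vec} each of $\vec s(j)$, $\vec u(j)$ has a component of size $\gtrsim C^{-1}$; either the two large components sit at a common site $k$, in which case the product $|\phi_p^\a(k)\phi_q^\b(k)|\gtrsim C^{-3}$ and the display above gives the lower bound on the determinant, or they sit at opposite sites and the complementary components are small, in which case one term of $\phi_p^\a(j)\phi_q^\b(j-1)-\phi_q^\b(j)\phi_p^\a(j-1)$ dominates the other and the determinant is bounded below directly. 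This case analysis, not a lower bound on $g_q(p)$, is what closes the argument. A secondary gap: when $a_{j-2}=a_j=0$ and $a_{j-1}\neq 0$ the fundamental-solution structure degenerates on the length-two block and your ``bulk argument verbatim'' does not apply; here one must use the explicit identity $\det(\vec s(j),\vec u(j))=\det\big((J_{(j-2,j]}-E)^{-1}\big)=\big((E-E_1)(E-E_2)\big)^{-1}$ together with $\delta\le|E-E_i|\le 2M$.
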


	\begin{proof}
	Recall $\vec s(j)=\binom{g_p(j)}{g_p(j-1)}$ and $\vec u(j)=\binom{g_q(j)}{g_q(j-1)}$ where $p=j-1$ or $j-2$ and $q=j$ or $j+1$. By \eqref{eq:sphere_dist_vform} and \eqref{eq:DistofComplexLine}, we have
	$$
	d(E^u(j),E^s(j))=\frac{|\det(\vec s(j),\vec u(j))|}{\|\vec s(j)\|\|\vec u(j)\|}.
	$$
	 Recall we have $C^{-1}<\|\vec s(j)\|<C$ and $C^{-1}< \|\vec u(j)\|<C$ for all $j\in\Z$, where $C=C(\delta,M)>0$. Hence, we may sometimes instead show
	 $$
	 |\det(\vec s(j),\vec u(j))|>\eta\mbox{ for all }j\in\Z.
	 $$
	 Fix a $j\in\Z$, we again divide the proof into three cases. 
	
	Case I. Assume $a_{j-1}a_j\not =0$ or $a_{j-2}a_{j-1}\neq 0$. We can apply Corollary~\ref{c.classificaiton_on_g} to both cases. Specifically, Corollary~\ref{c.classificaiton_on_g} can be directly applied to the case $a_{j-1}a_j\neq0$. If $a_{j-2}a_{j-1}\neq 0$, we just let $j-1$ plays the role of $j$ in Corollary~\ref{c.classificaiton_on_g}. Then we just define the $j_1$ and $j_2$ appearing in Corollary~\ref{c.classificaiton_on_g} with respect to $j-1$. 
	
	Now, no matter $a_{j-1}a_j\not =0$ or $a_{j-2}a_{j-1}\neq 0$, in all the cases of Corollary~\ref{c.classificaiton_on_g}, we have for $k=j-1$ or $j$ that
	$$
	g_k(n)=\begin{cases}
		\phi^\a_k(n), & \mbox{ for all }k\le n< j_2+1;\\
		\phi^\b_k(n), & \mbox{for all }j_1<n\le k.
		\end{cases}
		$$
	Recall that $\phi^\a_p$ and $\phi^\a_k$ are linearly dependent; $\phi^\b_q$ and $\phi^\b_k$ are linearly dependent. Hence, for $k=j-1$ or $j$, we may rewrite $g_k(n)$ as:
	\beq\label{eq:green_wronskian}
	g_k(n)= \begin{cases}\frac1W\phi_p^\a(n)\phi^\b_q(k), & k\le n<j_2+1,\\
		\frac1W\phi_p^\a(k)\phi^\b_q(n), & j_1<n\le k. \end{cases}
	\eeq
	It is a standard calculation that (see. e.g. \cite{teschl})
	$$
 W[\phi^\a_p,\phi^\b](m)=a_m\big[\phi_p^\a(m+1)\phi_q^\b(m)-\phi_p^\a(m)\phi_q^\b(m+1)\big]=
	a_m\det\begin{pmatrix} \phi_p^\a(m+1)& \phi_q^\b(m+1)\\ \phi_p^\a(m)&\phi_q^\b(m)\end{pmatrix}
	$$ 
	is the modified Wronkian of $\phi^\a$ and $\phi^\b$ and is independent of $m$ as long as $a_m\neq 0$. Since we deal with singular operators where we have many different cases resulting from Corollary~\ref{c.classificaiton_on_g}, we perform a sample computation for the case $a_{j-1}a_j\neq 0$ for the sake of completeness. First, we plug the formula \eqref{eq:green_wronskian} into the equation
	$$
	a_{k-1}g_k(k-1)+a_kg_k(k+1)+(b_k-E)g_k(k)=1
	$$
	and we obtain
	$$
	\frac1{W(k)}\big[a_{k-1}\phi^\a_p(k)\phi^\b_q(k-1)+a_k\phi^\a_p(k+1)\phi^\b_q(k)+(b_k-E)\phi^\a_p(k)\phi^\b_q(k)\big]=1.
	$$
	On other hand, we have
	$$
	\phi^\a_p(k)\big[a_{k-1}\phi^\b_q(k-1)+(b_k-E)\phi^\b_q(k)+a_k\phi^\b_q(k+1)\big]=0.
	$$
	Combine the two equation above, we obtain
	$$
	\frac{a_{k}}{W(k)}\big[\phi^\a_p(k+1)\phi^\b_q(k)-\phi^\a_p(k)\phi^\b_q(k+1)\big]=1
	$$
	which implies the desired formula for $W(k)$. Note calculation above still works if $k=j-1$ and $a_{k-1}=a_{j-2}=0$. Indeed, in all cases of Corollary~\ref{c.classificaiton_on_g}, $\phi^\b$ will be the one satisfies the Dirichlet boundary condition at $k=j-1$ so all the calculations still hold true when we set $a_{k-1}=0$. $W$ is independent of $k$ because
	\begin{align*}
	W(k+1)&=a_{k+1}\det\begin{pmatrix} \phi_p^\a(k+2)& \phi_q^\b(k+2)\\ \phi_p^\a(k+1)&\phi_q^\b(k+1)\end{pmatrix}\\
	&=a_{k+1}\det \left[A^E(k)\begin{pmatrix} \phi_p^\a(k+1)& \phi_q^\b(k+1)\\ \phi_p^\a(k)&\phi_q^\b(k)\end{pmatrix}\right]\\
	&=a_{k+1}\det(A^E(k))\det \begin{pmatrix} \phi_p^\a(k+1)& \phi_q^\b(k+1)\\ \phi_p^\a(k)&\phi_q^\b(k)\end{pmatrix}\\
	&=a_{k+1}\frac{a_k}{a_{k+1}}\begin{pmatrix} \phi_p^\a(k+1)& \phi_q^\b(k+1)\\ \phi_p^\a(k)&\phi_q^\b(k)\end{pmatrix}\\
	&=a_k\begin{pmatrix} \phi_p^\a(k+1)& \phi_q^\b(k+1)\\ \phi_p^\a(k)&\phi_q^\b(k)\end{pmatrix}\\
	&=W(k).
	\end{align*}
	The calculation above clearly works for $k=j-1$ as long as $a_{j-1}a_j\neq 0$.
	In particular, $W=W(j-1)=a_{j-1}\det(\vec s(j),\vec u(j))$. Thus, if we set $m=j-1$ and $k=n=j$, then we obtain
	\beq\label{eq:large_det1}
   |\phi_p^\a(j)\phi_q^\b(j)|=|\det(\vec s(j),\vec u(j))|\cdot|a_{j-1}g_{j}(j)|\le  \frac{M}{\delta}|\det(\vec s(j),\vec u(j))|,
	\eeq
where $\delta=d(E,\sigma(J_{a,b}))=\|(J_{a,b}-E)^{-1}\|^{-1}$. Similarly, if we set if we set $m=j-1$ and $k=n=j-1$, then we obtain
	\beq\label{eq:large_det2}
|\phi_p^\a(j-1)\phi_q^\b(j-1)|=|\det(\vec s(j),\vec u(j))|\cdot|a_{j-1}g_{j-1}(j-1)|\le  \frac{M}{\delta}|\det(\vec s(j),\vec u(j))|.
	\eeq
	Note we may certainly assume $C=C(M,\delta)>0$ is large so that the following argument goes through.
	
	If $|\phi_p^\a(j)\phi_q^\b(j)|\ge C^{-4}$, then \eqref{eq:large_det1} implies the desired estimate. So we assume $|\phi_p^\a(j)\phi_q^\b(j)|< C^{-4}$. If $|\phi_p^\a(j)|<C^{-2}$ and  $|\phi_q^\b(j)|<C^{-2}$, then $\|\vec s(j)\|>C^{-1}$ and $\|\vec u(j)\|>C^{-1}$. They imply $|\phi_p^\a(j-1)|>\frac{C^{-1}}{2}$ and $|\phi_q^\b(j-1)|>\frac{C^{-1}}{2}$, which together with \eqref{eq:large_det2} implies the desired estimate. If $|\phi_p^\a(j)|\ge C^{-2}$ and $|\phi_q^\b(j)|<C^{-2}$. Then $|\phi_q^\b(j-1)|>\frac{C^{-1}}2$. Hence, if $|\phi_p^\a(j-1)|\ge \frac{C^{-2}}2$, then \eqref{eq:large_det2} implies the desired estimate. If $|\phi_p^\a(j-1)|\le \frac{C^{-2}}2$. Then we have
	\begin{align*}
	|\det(\vec s(j),\vec u(j))|&=|\phi_p^\a(j)\phi_q^\b(j-1)-\phi_q^\b(j)\phi_p^\a(j-1)|\\
	&\ge |\phi_p^\a(j)\phi_q^\b(j-1)|-|\phi_q^\b(j)\phi_p^\a(j-1)|\\
	&\ge C^{-2}\frac{C^{-1}}2-C^{-2}\frac{C^{-2}}2\\
	&\ge \frac{C^{-3}}4.
	\end{align*}
	In all the possible cases, we have $|\det(\vec s(j),\vec u(j))|>\eta>0$.

	Case II. Assume $a_{j-1}=0$. Then by Remark~\ref{r.particular_su}, we have $E^s(j)=\mathrm{span}\left\{\binom{0}{1}\right\}$ and $E^u(j)=\mathrm{span}\left\{\binom{1}{0}\right\}$. Clearly, we then have 
	$$
	d(E^s(j),E^u(j))=2,
	$$
   concluding the proof.
   
   Case III. The only case left is when $a_{j-2}=a_j=0$ and $a_{j-1}\neq 0$. Note in this case, $J_{a,b}=J_{(-,j-2]}\oplus J_{[j-1,j])}\oplus J_{(j,+)}$. Thus, eigenvalues of $J_{[j-1,j])}$ are eigenvalues of $J_{a,b}$. Let $E_1$ and $E_2$ be the two eigenvalues of $J_{[j-1,j])}$. Then we have $\delta\le |E-E_i|\le 2M$ for $i=1,2$. Note it holds that
   $$
   (J_{[j-1,j])}-E)^{-1}=\begin{pmatrix}g_{j-1}(j-1) & g_j(j-1)\\ g_{j-1}(j) & g_j(j)\end{pmatrix}.
   $$
   Since $a_{j-2}=a_j=0$, by \eqref{eq:s_choose_j-1} and \eqref{eq:u_choose_j}, we have
   \begin{align*}
   |\det(\vec s(j), \vec  u(j))|&=\left|\det\begin{pmatrix} g_{j-1}(j) & g_j(j)\\ g_{j-1}(j-1) & g_j(j-1) \end{pmatrix}\right|\\
   &=|\det  (J_{[j-1,j])}-E)^{-1}|\\
   &=|\det  (J_{[j-1,j])}-E)|^{-1}\\
   &=|(E-E_1)(E-E_2)|^{-1}\\
   &\ge \frac1{4M^2}.
   \end{align*}
Combining all three cases above, we obtain the desired estimates.
\end{proof} 
	
	Combining everything we proved in this section, we can see that $B^{E}(j)$ admits dominated splitting as defined in Definition~\ref{d.uhsequence}.

	\section{The Case with Dynamically Defined Jacobi Operators}\label{s:dynamicalVersion}
	
	In this section, we prove Theorem~\ref{t.jacobi_johnson}.  First, we introduce the definition of  $\M(2,\C)$-cocycles that have dominated splitting. Let $\Omega$ be a compact metric space $\Omega$,  $T$ be a homeomorphism $\Omega$, and $B\in C(\Omega,\M(2,\C))$ be a continuous cocycle map.  Then we consider the following dynamical system:
$$
(T,B):\Omega\times \C^2\to \Omega\times \C^2,\ (T,B)(\omega,\vec v)=(T\omega, B(\omega)\vec v).
$$
Iterations of dynamics are denoted by $(T^n,B_n(\omega)):=(T,B)^n$. In particular, similar to the sequence case, we have
\beq\label{eq:cocycle_iteration_dynamical}
B_n(\omega)=\begin{cases}B(T^{n-1}\omega)\cdots B(\omega), & n\ge1,\\ I_2 , & n=0,
\end{cases}
\eeq 
and $B_{-n}(\omega)= [B_{n}(T^{-n}\omega)]^{-1}$, $n\ge 1$, when all matrices involved are invertible. In the following definition, we again identify $z\in\C\PP^1$ with an one-dimenionsal subspace of $\C^2$ spanned by $\binom{1}{z}$ and $\infty$ with the one spanned by $\binom{0}{1}$.

\begin{defi}\label{d:domination_dynamical}
	Let $(\Omega,T)$ and $B$ be as above. Then we say $(T,B)$ has dominated spliting if there are two maps $E^s, E^u:\Omega\to \C\PP^1$ with the following properties:
	\begin{enumerate}
		\item $E^s, E^u\in C(\Omega, \C\PP^1)$. In other words, they are continuous.
		\item $B(\omega)[E^s(\omega)]\subseteq E^s(T\omega)$ and $B(\omega)[E^u(\omega)]\subseteq E^u(T\omega)$ for all $\omega\in\Omega$. 
		\item There is a $N\in\Z_+$ and $\l>1$ such that  
		$$
		\|B_N(\omega)\vec u\|> \l \|B_N(\omega)\vec s\|
		$$
		for all $\omega\in\Omega$ and all unit vectors $\vec u\in E^u(\omega)$ and $\vec s\in E^s(\omega)$.
		\end{enumerate}
	\end{defi}
\begin{remark}\label{r:dyna_DS_implies}
	Condition (3) above clearly implies that $B(\omega)E^u(\omega)\neq \{\vec 0\}$ for all $\omega\in\Omega$, which together with condition (2) implies
\beq\label{eq:nonzero_image_u}
B(\omega)E^u(\omega)=E^u(T\omega)\mbox{ for all }\omega\in\Omega.
\eeq
Moreover, condition (3) implies that $B_n(\omega)$ is nonzero for all $\omega\in\Omega$ and all $n\in\Z_+$. In particular, by compactness of $\Omega$ and continuity of $B$, we have for all $n\in\Z_+$:
\beq\label{eq:unif_lower_bound_norm}
\inf_{\omega\in\Omega}\|B_n(\omega)\|>0.
\eeq
In particular, it is quite clear (see Lemma~\ref{l:equiv_conditions_DS_dyna} below) that if $(T,B)\in\CD\CS$, then for each $\omega\in\Omega$ the sequence $B(T^{(\cdot)}\omega):\Z\to\M(2,\C)$ admits dominated splitting as defined in Definition~\ref{d.uhsequence}. The invariant directions are given by $E^s(T^{(\cdot )}\omega), E^u(T^{(\cdot)}\omega):\Omega\to\C\PP^1$. In particular, this proves Corollary~\ref{c:jacobi_johnson}.
\end{remark}
Recall we let $(T,B)\in\CD\CS$ denotes that $(T,B)$ has dominated splitting. Then, we have the following lemma.
\begin{lemma}\label{l:equiv_conditions_DS_dyna}
	To define $(T,B)\in\CD\CS$, one can replace condition (1) in Definition~\ref{d:domination_dynamical} by the following condition:
\beq\label{eq:unif_separation_us_dyn}
\inf_{\omega\in\Omega}d(E^s(\omega),E^u(\omega))>0.
\eeq
	\end{lemma}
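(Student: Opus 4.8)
The plan is to prove the stated equivalence by observing that conditions (2) and (3) of Definition~\ref{d:domination_dynamical} are common to both formulations, so it suffices to show that, granting (2) and (3), condition (1) (continuity of $E^s$ and $E^u$) holds if and only if \eqref{eq:unif_separation_us_dyn} holds. The implication ``(1) $\Rightarrow$ \eqref{eq:unif_separation_us_dyn}'' is the easy one: the map $\omega\mapsto d\big(E^s(\omega),E^u(\omega)\big)$ is continuous, and it cannot vanish, since if $E^s(\omega)=E^u(\omega)$ for some $\omega$ we may take a common unit vector $\vec u=\vec s$ in condition (3), obtaining $x>\l x$ with $x=\|B_N(\omega)\vec u\|\ge 0$ and $\l>1$, which is impossible. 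Hence $\omega\mapsto d\big(E^s(\omega),E^u(\omega)\big)$ attains a strictly positive minimum on the compact space $\Omega$, which is exactly \eqref{eq:unif_separation_us_dyn}.

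For the converse, ``\eqref{eq:unif_separation_us_dyn} $\Rightarrow$ (1)'', I would pass to the sequence picture of Definition~\ref{d.uhsequence}. First, exactly as in Remark~\ref{r:dyna_DS_implies}, condition (3) forces $B_n(\omega)\neq 0$ for all $\omega\in\Omega$ and all $n\in\Z_+$, so by compactness of $\Omega$ and continuity of $B$ one has $\inf_{\omega\in\Omega}\|B_N(\omega)\|>0$ (this uses neither continuity of $E^{s/u}$ nor \eqref{eq:unif_separation_us_dyn}). Consequently, for each fixed $\omega\in\Omega$ the sequence $B_\omega:=B(T^{(\cdot)}\omega):\Z\to\M(2,\C)$ lies in $\CD\CS$ in the sense of Definition~\ref{d.uhsequence}, with invariant directions $j\mapsto E^u(T^j\omega)$ and $j\mapsto E^s(T^j\omega)$: conditions (1) and (2) of Definition~\ref{d.uhsequence} come from conditions (2) and (3) of Definition~\ref{d:domination_dynamical}, condition (3) comes from \eqref{eq:unif_separation_us_dyn}, and condition (4) holds because $\inf_{j\in\Z}\|(B_\omega)_N(j)\|=\inf_{j\in\Z}\|B_N(T^j\omega)\|\ge\inf_{\omega'\in\Omega}\|B_N(\omega')\|>0$. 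By Remark~\ref{r:equivalence_inv_directions}, these are then the \emph{unique} one-dimensional spaces that satisfy conditions (1)--(3) of Definition~\ref{d.uhsequence} for $B_\omega$. I also record the uniform lower bound $\inf_{\omega\in\Omega}\|B_N(\omega)\vec u(\omega)\|>0$ for unit $\vec u(\omega)\in E^u(\omega)$, which follows from $\inf_\Omega\|B_N\|>0$, the separation \eqref{eq:unif_separation_us_dyn}, and the domination in condition (3), by writing a norm-maximizing unit vector in the basis $\{\vec u(\omega),\vec s(\omega)\}$.

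Now suppose, for contradiction, that $E^u$ or $E^s$ fails to be continuous at some $\omega_\infty\in\Omega$; say $\omega_k\to\omega_\infty$ with $E^u(\omega_k)\to L\neq E^u(\omega_\infty)$ (the $E^s$ case is handled simultaneously by the same extraction). Using compactness of $\C\PP^1$ and a diagonal argument over $j\in\Z$, pass to a subsequence along which $E^u(T^j\omega_k)\to F^u(j)$ and $E^s(T^j\omega_k)\to F^s(j)$ for every $j$; in particular $F^u(0)=L$. Since $B(T^j\omega_k)\to B(T^j\omega_\infty)=B_{\omega_\infty}(j)$ for each $j$, the pair $F^u,F^s$ satisfies conditions (1)--(3) of Definition~\ref{d.uhsequence} for $B_{\omega_\infty}$: the inclusions $B_{\omega_\infty}(j)[F^u(j)]\subseteq F^u(j+1)$ and $B_{\omega_\infty}(j)[F^s(j)]\subseteq F^s(j+1)$ are obtained as limits of the exact relations holding for $B_{\omega_k}$ (the image of a line under a matrix is a line or $\{\vec 0\}$, and $\{\vec 0\}$ sits inside any subspace, so possible singularity of $B_{\omega_\infty}(j)$ causes no problem); the separation $d\big(F^u(j),F^s(j)\big)\ge\inf_{\omega\in\Omega}d\big(E^u(\omega),E^s(\omega)\big)>0$ passes to the limit; and domination passes to the limit at step $N$ as the non-strict inequality $\|(B_{\omega_\infty})_N(j)\vec u_\infty(j)\|\ge\l\|(B_{\omega_\infty})_N(j)\vec s_\infty(j)\|$, which upgrades to a strict inequality at step $kN$ for $k$ chosen with $\l^k>2$, using that $F^u$ is $B_{\omega_\infty}$-invariant with non-vanishing $N$-step forward images (a consequence of the uniform lower bound recorded above). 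By the uniqueness in Remark~\ref{r:equivalence_inv_directions} applied to $B_{\omega_\infty}\in\CD\CS$, we get $F^u(j)=E^u(T^j\omega_\infty)$ and $F^s(j)=E^s(T^j\omega_\infty)$ for all $j$, so in particular $L=F^u(0)=E^u(\omega_\infty)$, contradicting $L\neq E^u(\omega_\infty)$. Hence $E^u$, and likewise $E^s$, is continuous. The main obstacle is precisely this last step: verifying that the limiting sections $F^u,F^s$ are genuine admissible invariant directions, which is delicate because individual matrices $B(\omega)$ may be singular and because the domination estimate survives the limiting procedure only in non-strict form and must be restored to a strict inequality by iterating; everything else is routine compactness and an appeal to the uniqueness statement already established for $\M(2,\C)$-sequences.
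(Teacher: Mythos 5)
Your proof is correct, and it rests on the same two pillars as the paper's: the easy direction via compactness, and, for the converse, the identification of limit directions with the canonical ones through the uniqueness statement of Remark~\ref{r:equivalence_inv_directions}. The execution of the hard direction differs in a way worth noting. The paper takes limits $F^{u}(\omega_0),F^s(\omega_0)$ only at the base point, propagates them forward and backward with the cocycle, and then splits into cases according to the first forward singular time $N_0$ and the last backward singular time $N_1$, handling the singular cases by direct kernel/image arguments and invoking Remark~\ref{r:equivalence_inv_directions} only on the half-lines where no determinant vanishes. You instead extract, by a diagonal argument, limit sections $F^u(j),F^s(j)$ along the entire orbit, verify conditions (1)--(3) of Definition~\ref{d.uhsequence} for them wholesale, and apply the uniqueness once; this avoids the case analysis entirely and treats $E^u$ and $E^s$ symmetrically. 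The two points that make this work — and which you correctly flag and resolve — are (i) the non-vanishing of the $N$-step forward images of $F^u$, which you derive from the uniform lower bound on $\|B_N(\omega)\vec u(\omega)\|$ (the same estimate underlying the paper's claim that $F^u(\omega_0)$ cannot be a kernel direction), and (ii) the restoration of strict domination from the non-strict inequality surviving the limit, by iterating $N$-blocks; the paper's display \eqref{eq:F_domination} implicitly relies on the same iteration. Your route is arguably cleaner, at the cost of having to check invariance and domination of the limit sections at every site rather than only along the nonsingular stretches.
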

\begin{proof}
	If $E^s$ and $E^u$ are as in Definition~\ref{d:domination_dynamical}, then condtion (2) implies that $E^s(\omega)\neq E^u(\omega)$ for all $\omega\in\Omega$. By compactness of $\Omega$ and condition (1), we then obtain \eqref{eq:unif_separation_us_dyn}. Now, suppose we have $E^s$ and  $E^u$ satisfy \eqref{eq:unif_separation_us_dyn} and conditions (2)-(3) of Definition~\ref{d:domination_dynamical}, we want to show that they are continuous on $\Omega$. Let $N$ be as in condition (3).
	
It suffices to show for every $\omega_0\in\Omega$ and every convergent sequences $\{E^s(\omega_k)\}$ and $\{E^s(\omega_k)\}$ where $\omega_k\to\omega_0$ as $k\to \infty$, we have 
	$$
		\lim_{k\to\infty} E^s(\omega_k)=E^s(\omega_0) \mbox{ and } \lim_{k\to\infty} E^u(\omega_k)=E^u(\omega_0).
	$$
 To this end, we define 
 $$
 F^u(\omega_0):=\lim_{k\to\infty} E^u(\omega_k)\mbox{ and }F^s(\omega_0):=\lim_{k\to\infty} E^s(\omega_k).
 $$ 
 First, we claim for any $n\in\Z_+$, $F^u(\omega_0)$ cannot be the eigenspace of $B_{n}(\omega_0)$ for the eigenvalue $0$, if such an eigenvalue exists for $B_n(\omega_0)$. Indeed, if $B_n(\omega_0)F^u(\omega_0)=\{\vec 0\}$, then we have $\lim_{k\to\infty}\|B_n(\omega_k)\vec u(\omega_k)\|=0$ where $\vec u(\omega)$ denotes a unit vector in $E^u(\omega)$. Thus there is a $mN>n$ such that 
 $$
 \lim_{k\to\infty}\|B_{mN}(\omega_k)\vec u(\omega_k)\|=0.
 $$
 Let $\vec s(\omega)$ d enotes a unit vector in $E^s(\omega)$. Then the equation above together with condition (3) of Definition~\ref{d:domination_dynamical} implies 
  $$
 \lim_{k\to\infty}\|B_{mN}(\omega_k)\vec s(\omega_k)\|=0.
 $$
 Since $\inf_{\Omega}d(E^s(\omega), E^u(\omega))>0$, the two estimes above implies 
 $$
 \lim_{k\to\infty}\|B_{mN}(\omega_k)\|=0
 $$
 which contradicts \eqref{eq:unif_lower_bound_norm}. This proves the claim. By continuity of $B$, $E^u(\omega_k)$ cannot be the eigenspace of $B_{n}(\omega_0)$ for the possible eigenvalue $0$ for all $k$ large. Thus for all $n\ge 1$, the following estimates hold true for all $k$ large:
 \begin{align}\label{eq:Fu_to_n}
 \nonumber B_n(\omega_0)\cdot F^u(\omega_0)&=\lim_{k\to\infty}B_n(\omega_0) \cdot E^u(\omega_k)\\
 &=\lim_{k\to\infty}B_n(\omega_k) \cdot E^u(\omega_k)\\
 \nonumber &=\lim_{k\to\infty}E^u(T^n\omega_k).
 \end{align}
 
We define a $N_0\in\N$ as follows: $N_0=0$ if $\det(B(\omega_0))=0$; otherwise,
 $$
 N_0:=\min\{n\ge 1: \det(B(T^n\omega_0))=0\}.
 $$
 Note $N_0$ may be $\infty$ and $\det[B_n(\omega_0)]\neq 0$ for all $0\le n\le N_0$. Then similarly to \eqref{eq:Fu_to_n}, we have for all $0\le n\le N_0$:
 \beq\label{eq:Fs_to_n}
 B_n(\omega_0)F^s(\omega_0)=\lim_{k\to\infty}E^s(T^n\omega_k).
 \eeq
  Then \eqref{eq:Fu_to_n}, \eqref{eq:Fs_to_n}, conditions (2)-(3) of Definition~\ref{d:domination_dynamical}, and continuity of $B$ imply for all $0\le n\le N_0$ and all $m\in\Z_+$:
  \beq\label{eq:F_domination}
  \|B_{mN}(T^n\omega_0)\vec v^u_n\|\ge \l^m \|B_{mN}(T^n\omega_0)\vec v^s_n\|
  \eeq
  where $\vec v^{s(u)}_n$ are unit vectors in $B_n(\omega_0)F^{s(u)}(\omega_0)$, respectively.  Since $E^s$ and $E^u$ satisfy \eqref{eq:unif_separation_us_dyn}, we have for all $0\le n\le N_0$ that 
  \beq \label{eq:F_unif_separation}
  d(B_n(\omega_0)F^u(\omega_0),B_n(\omega_0)F^s(\omega_0))>\delta.
  \eeq
  
  Now we are ready to show $F^s(\omega_0)=E^s(\omega_0)$. We divide it into two different cases.
  
  Case I: $N_0<\infty$. Then $\det[B(T^{N_0}\omega_0)]=0$ and $E^s(T^{N_0}\omega_0)=\ker[B(T^{N_0}\omega_0)]$. Take $n=N_0$ in \eqref{eq:F_domination}, we must have $\vec v^s_{N_0}\in E^s(T^{N_0}\omega_0)$ which implies $B_{N_0}(\omega_0)F^s(\omega_0)=E^s(T^{N_0}\omega_0)$. By $B$-invariance of $E^s$ and \eqref{eq:Fs_to_n}, we then have for all $0\le n\le N_0$:
  $$
  B_n(\omega_0)F^s(\omega_0)=E^s(T^n\omega_0).
  $$
  In particular, $F^s(\omega_0)=E^s(\omega_0)$.
  
  Case II: $N_0=\infty$. First, by Remark~\ref{r:dyna_DS_implies}, the sequence $B(T^{(\cdot )}\omega_0):\Z\to\M(2,\C)$ satisfies Defintion~\ref{d.uhsequence} with the invariant directions $E^s(T^n\omega_0)$ and $E^u(T^n\omega_0)$. On the other hand, by \eqref{eq:unif_lower_bound_norm}, \eqref{eq:F_domination}, and \eqref{eq:F_unif_separation}, $\{B_n(\omega_0)F^s(\omega_0)\}$ and $\{B_n(\omega_0)F^u(\omega_0)\}$ are also invariant directions  of $\{B(T^n\omega_0)\}_{n\ge 0}$ as described in Definition~\ref{d.uhsequence}. Hence, by Remark~\ref{r:equivalence_inv_directions}, we must have 
  $$
  E^s(T^n\omega_0)=B_n(\omega_0)F^s(\omega_0)$$ 
  for all $n\ge 0$. In particular, we have $F^s(\omega_0)=E^s(\omega_0)$.
  
  To show $F^u(\omega_0)=E^u(\omega_0)$. We define 
  $$
 N_1:=\max\{n<0: \det(B(T^n\omega_0))=0\}.
  $$
  which might be $-\infty$. Note that $\det[B(T^{n}\omega_0)]\neq 0$ for all $N_1<n<0$ if such $n$ exists. In particular, \eqref{eq:Fu_to_n}-\eqref{eq:F_unif_separation} hold true for all $N_1<n\le 0$. If $N_1>-\infty$, then $\det[B(T^{N_1}\omega_0)]=0$. By passing to a subsequence of $\{\omega_k\}$ if necessary, we may assume $E^u(T^{N_1}\omega_k)$ is convergent. By the argument showing $F^u(\omega_0)=\lim E^u(\omega_k)$ cannot the eigenspace of $B_{n}(\omega_0)$ with the possible eigenvalue $0$ for all $n\ge 1$ above, $\lim E^u(T^{N_1}\omega_k)$ cannot be the eigenspace of $B_{m}(T^{N_1}\omega_0)$ for the possible eigenvalue $0$, for all $m\ge 1$. Since $\det[B_{N_1}(T^{N_1}\omega_0)]=0$, we obtain
  \begin{align*}
F^u(\omega_0)&=\lim_{k\to\infty} E^u(\omega_k) \\
&=\lim_{k\to\infty} B_{-N_1}(T^{N_1}\omega_k)E^u(T^{N_1}\omega_k) \\
  &=B_{-N_1}(T^{N_1}\omega_0)\lim_{k\to\infty}E^u(T^{N_1}\omega_k)\\
  &=B_{-N_1}(T^{N_1}\omega_0)(\C^2)\\
  &= E^u(\omega_0).
  \end{align*}
If $N_1=-\infty$. Then similar to case 2 above, we have that $\{B_n(\omega_0)F^s(\omega_0)\}$ and $\{B_n(\omega_0)F^u(\omega_0)\}$, $n<0$, are invariant directions  of $\{B(T^n\omega_0)\}_{n<0}$ as described in Definition~\ref{d.uhsequence}. Hence, by Remark~\ref{r:equivalence_inv_directions}, we obtain $F^u(\omega_0)=E^u(\omega_0)$. This concludes the proof.
\end{proof}

The follow corollary is known. But it is hard to find an explicit proof. On the other hand, it is a relatively straightforward consequence of Lemma~\ref{l:equiv_conditions_DS_dyna} and Remark~\ref{r:dependence of constants.}, which are conseqeunces of the proofs of Theorems~\ref{l.opennessDS} and ~\ref{l:invConeFieldtoDS}. 
\begin{corollary}\label{c:stability_DS_dyna} 
	Assume $(T,B)\in\CD\CS$. Then there exists a $\e>0$ such that $(T,\tilde B)\in\CD\CS$, provided $\widetilde B\in C(\Omega,\M(2,\C))$and $\|B-\tilde B\|_\infty<\e$.
	\end{corollary}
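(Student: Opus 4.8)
The plan is to bootstrap the sequence-level stability result (Theorem~\ref{l.opennessDS}), together with the uniformity of constants recorded in Remark~\ref{r:dependence of constants.}, the uniqueness of invariant directions from Remark~\ref{r:equivalence_inv_directions}, and the uniform-separation reformulation of domination in Lemma~\ref{l:equiv_conditions_DS_dyna}. The idea is that a dynamical dominated splitting restricts, fibrewise, to a family of $\M(2,\C)$-sequences in $\CD\CS$ with \emph{uniform} constants; the sequence-level stability theorem then perturbs each fibre with a uniform $\e$; and one finally reassembles the perturbed fibrewise directions into a genuine cocycle splitting.

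First I would fix the data attached to $(T,B)\in\CD\CS$. Let $E^s,E^u:\Omega\to\C\PP^1$ be its invariant bundles and $N\in\Z_+$, $\l>1$ the constants from Definition~\ref{d:domination_dynamical}. By compactness of $\Omega$ and continuity of $E^{s},E^u$, the number $\delta_0:=\inf_{\omega\in\Omega}d(E^s(\omega),E^u(\omega))$ is positive; by \eqref{eq:unif_lower_bound_norm}, $m:=\inf_{\omega\in\Omega}\|B_N(\omega)\|>0$; and $\|B\|_\infty<M$ (after shrinking $\e$ below, also $\|\widetilde B\|_\infty<M$). For each $\omega\in\Omega$ set $B_\omega:=B(T^{(\cdot)}\omega):\Z\to\M(2,\C)$. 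By Remark~\ref{r:dyna_DS_implies}, $B_\omega\in\CD\CS$ in the sense of Definition~\ref{d.uhsequence}, with invariant directions $n\mapsto E^{s(u)}(T^n\omega)$; and from the data above one reads off the uniform bounds $N(B_\omega)\le N$, $\delta(B_\omega)\ge\delta_0$, $m_{N}(B_\omega)\ge m$ and $\|B_\omega\|_\infty<M$, valid for every $\omega\in\Omega$.

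Next, for $\widetilde B\in C(\Omega,\M(2,\C))$ with $\|B-\widetilde B\|_\infty<\e$, the sequence $\widetilde B_\omega:=\widetilde B(T^{(\cdot)}\omega)$ satisfies $\|\widetilde B_\omega-B_\omega\|_\infty\le\|B-\widetilde B\|_\infty<\e$ for every $\omega$. By Theorem~\ref{l.opennessDS}, and since Remark~\ref{r:dependence of constants.} guarantees that its $\e$ — and the resulting constants $N(\widetilde B_\omega),\delta(\widetilde B_\omega)$ — depend only on $N(B_\omega),\delta(B_\omega),m_N(B_\omega)$ and $M$, one can choose a single $\e>0$, depending only on $N,\delta_0,m,M$, so that simultaneously for all $\omega\in\Omega$ one has $\widetilde B_\omega\in\CD\CS$ with invariant directions $\widetilde E^{s(u)}_\omega:\Z\to\C\PP^1$ and uniform constants $N(\widetilde B_\omega)\le\widetilde N$, $\delta(\widetilde B_\omega)\ge\widetilde\delta$. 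I would then define $\widetilde E^{s(u)}:\Omega\to\C\PP^1$ by $\widetilde E^{s(u)}(\omega):=\widetilde E^{s(u)}_\omega(0)$ and verify the hypotheses of Lemma~\ref{l:equiv_conditions_DS_dyna}. The crucial consistency statement is that $\widetilde E^{s(u)}_{T^n\omega}(j)=\widetilde E^{s(u)}_\omega(j+n)$ for all $n,j\in\Z$: this holds because $\widetilde B_{T^n\omega}(j)=\widetilde B_\omega(j+n)$ is a left shift of $\widetilde B_\omega$, so both sides are invariant directions of the same $\M(2,\C)$-sequence meeting conditions (1)--(3) of Definition~\ref{d.uhsequence}, hence coincide by the uniqueness assertion of Remark~\ref{r:equivalence_inv_directions}. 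In particular $\widetilde B(\omega)\widetilde E^{s(u)}(\omega)=\widetilde B_\omega(0)\widetilde E^{s(u)}_\omega(0)\subseteq\widetilde E^{s(u)}_\omega(1)=\widetilde E^{s(u)}(T\omega)$, which is condition (2) of Definition~\ref{d:domination_dynamical}. For condition (3) I would pass from the possibly $\omega$-dependent step $N(\widetilde B_\omega)\le\widetilde N$ to the single step $N^*:=\mathrm{lcm}\{1,\dots,\widetilde N\}$: iterating condition (2) of Definition~\ref{d.uhsequence} shows domination at every multiple of $N(\widetilde B_\omega)$ with ratio $\ge 2$, so $\|\widetilde B_{N^*}(\omega)\vec u\|>2\|\widetilde B_{N^*}(\omega)\vec s\|$ for all $\omega$ and all unit $\vec u\in\widetilde E^u(\omega)$, $\vec s\in\widetilde E^s(\omega)$. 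Finally $d(\widetilde E^u(\omega),\widetilde E^s(\omega))=d(\widetilde E^u_\omega(0),\widetilde E^s_\omega(0))\ge\widetilde\delta>0$ uniformly in $\omega$, so Lemma~\ref{l:equiv_conditions_DS_dyna}, applied to the continuous cocycle $\widetilde B$ and the bundles $\widetilde E^{s(u)}$, yields $(T,\widetilde B)\in\CD\CS$.

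The main obstacle is organizational rather than analytic: the argument hinges entirely on the fact — made available by Remark~\ref{r:dependence of constants.} — that the $\e$ and the output constants in Theorem~\ref{l.opennessDS} are controlled purely in terms of $N(B),\delta(B),m_N(B),M$, so that the fibrewise perturbation estimates are genuinely uniform over $\omega\in\Omega$; and then on the gluing step, where the fibrewise dominated splittings, a priori living at $\omega$-dependent steps, must be upgraded to a single step $N^*$ and the pointwise directions $\widetilde E^{s(u)}(\omega)$ assembled into an honest cocycle splitting, for which uniqueness (Remark~\ref{r:equivalence_inv_directions}) and the uniform-separation criterion (Lemma~\ref{l:equiv_conditions_DS_dyna}) are exactly the tools needed.
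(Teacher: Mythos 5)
Your proposal is correct and follows essentially the same route as the paper: restrict to fibrewise sequences with uniform constants, invoke Theorem~\ref{l.opennessDS} together with the constant-dependence recorded in Remark~\ref{r:dependence of constants.} to get a uniform $\e$, define the perturbed bundles at time $0$, and conclude via Lemma~\ref{l:equiv_conditions_DS_dyna}. You merely spell out the shift-consistency and single-step details that the paper compresses into ``one readily checks,'' using Remark~\ref{r:equivalence_inv_directions} exactly as intended.
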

\begin{proof}
	By Remark~\ref{r:dyna_DS_implies}, $(T,B)\in\CD\CS$ implies $B^\omega(\cdot )=B(T^{(\cdot)}\omega):\Z\to\M(2,\C)\in\CD\CS$ for all $\omega$. Moreover, the corresponding constants $\delta(B^\omega)$, $N(B^\omega)$, and $m_{B^\omega}$ are all independent of $\omega\in\Omega$. Thus by Theorem~\ref{l.opennessDS}, we can find a $\e>0$, the choice of which is independent of $\omega$, such that if $\|B-\widetilde B\|_\infty<\e$, then 
	$$
	\widetilde B^\omega(\cdot)=\widetilde B(T^{(\cdot)}\omega):\Z\to\M(2,\C)\in\CD\CS\mbox{ for all }\omega\in\Omega.
	$$ 
	Moreover, the constants  $\delta(\widetilde B^\omega)$ and $N(\widetilde B^\omega)$ are independ of $\omega\in\Omega$. Let $E^{s}_\omega$ and $E^u_\omega$ be the two invariant directions of $\widetilde B^\omega$. Thus, we can define the two invariant directions $E^s$ and $E^u$ of $(T,B)$ as 
	$$
	E^s(\omega)=E^s_\omega(0) \mbox{ and }E^u(\omega)=E^u_\omega(0)\mbox{ for all }\omega\in\Omega.
	$$
	By the discussion above, one readily checks such defined $E^s$ and $E^u$ satisfy \eqref{eq:unif_separation_us_dyn} and conditions (2) and (3). Hence, Lemma~\ref{l:equiv_conditions_DS_dyna} implies $(T,\widetilde B)\in\CD\CS$.
	\end{proof}

 Now we consider Jacobi operators which are defined dynamically. Fix functions $a\in C(\Omega,\C)$ and $b\in C(\Omega,\R)$ and define a family of Jacobi operators as in \eqref{eq:dynamical_jacobi}, i.e. 
	$$
		(J_\omega\psi)(n)=\overline{a(T^{n-1}\omega)}\psi(n-1)+a(T^n\omega)\psi(n+1)+b(T^n\omega)\psi(n).
	$$
	The Jacobi cocycle map $B^E:\Omega\to\M(2,\C)$ is given by
$$
	B^{E}(\omega)=\begin{pmatrix}E-b(\omega) &-\overline{a(T^{-1}\omega)}\\ a(\omega) &0\end{pmatrix}.
$$

	First, we have the following proposition which is basically a weak version of \cite[Theorem 6]{zhang2} since we are under the context of Jacobi operators. Since they are not entirely the same, we include the proof for sake of completeness. We set $B_r(S)=\{x:d(x,y)<r\mbox{ for some }y\in S\}$ where $S\subset \Omega$ or $\C$.
	\begin{prop}\label{p:dense_orbit_spectrum_J}
		Let $\Omega,T, a$ and $b$ be as above. Then for each $\omega\in\Omega$, each $E\in\sigma(J_\omega)$, and each $\varepsilon>0$, there exists a $\delta=\delta(\omega,\e,E)>0$ so that the following holds true:
		$$
		\mathrm{Orb}(\omega_0)\cap B_\delta(\omega)\neq\varnothing\mbox{ implies }E\subset B_\varepsilon[\sigma(H_{\omega_0})].
		$$
	In particular, if $\overline{\mathrm{Orb}(\omega_0)}=\Omega$, then $\sigma(J_\omega)\subset\sigma(J_{\omega_0})$ for all $\omega\in\Omega$. Note in particular, if $T$ is minimal, then $\sigma(J_\omega)$ is indepedent of $\omega\in\Omega$.
		\end{prop}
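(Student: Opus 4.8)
The plan is to combine Weyl's criterion \eqref{eq:weyl} with the continuity of $a$, $b$, and $T$, transplanting an approximate eigenfunction of $J_\omega$ localized near the origin to an approximate eigenfunction of $J_{\omega_0}$ localized near an index $n$ for which $T^n\omega_0$ is close to $\omega$. Fix $\omega\in\Omega$, $E\in\sigma(J_\omega)$, and $\e>0$. Since $J_\omega$ is bounded and self-adjoint, \eqref{eq:weyl} gives a unit vector in $\ell^2(\Z)$ on which $J_\omega-E$ has norm $<\e/4$; truncating it to a large window $[-L,L]$ (the truncation error is at most $(\|J_\omega\|+|E|)$ times the $\ell^2$-tail, which tends to $0$ as $L\to\infty$) and renormalizing, I obtain a unit vector $u\in\ell^2(\Z)$ supported on $[-L,L]$ with $\|(J_\omega-E)u\|<\e/2$.

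Next I transplant $u$. Let $S$ be the unitary shift on $\ell^2(\Z)$, $(S\phi)(j)=\phi(j-1)$, and for $n\in\Z$ put $v^{(n)}=S^nu$, a unit vector supported on $[n-L,n+L]$. Comparing entries, $(J_{\omega_0}-E)v^{(n)}$ and $S^n(J_\omega-E)u$ agree outside $[n-L-1,n+L+1]$ and, at index $k+n$ with $|k|\le L+1$, differ only through the finitely many quantities $a(T^{k+n-1}\omega_0)-a(T^{k-1}\omega)$, $a(T^{k+n}\omega_0)-a(T^{k}\omega)$, and $b(T^{k+n}\omega_0)-b(T^{k}\omega)$. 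For each such $k$, the map $\omega'\mapsto a(T^{k}\omega')$ (resp. $\omega'\mapsto b(T^{k}\omega')$) is continuous on $\Omega$, being a composition of the continuous function $a$ (resp. $b$) with the homeomorphism $T^{k}$. Since only the finitely many indices $|k|\le L+1$ occur, there is $\delta=\delta(\omega,\e,E)>0$ (depending also on the already-fixed $L$) so that $T^n\omega_0\in B_\delta(\omega)$ forces each of those differences to be smaller than a suitable $L$-dependent multiple of $\e$, hence $\|(J_{\omega_0}-E)v^{(n)}-S^n(J_\omega-E)u\|<\e/2$. Combining this with the first paragraph and the fact that $S^n$ is an isometry gives $\|(J_{\omega_0}-E)v^{(n)}\|<\e$ with $\|v^{(n)}\|=1$. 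By \eqref{eq:inverseOperatorNorm} applied to $J_{\omega_0}$ (trivially if $E\in\sigma(J_{\omega_0})$ already) this yields $d(E,\sigma(J_{\omega_0}))<\e$, i.e. $E\in B_\e[\sigma(J_{\omega_0})]$, which is exactly the claimed implication.

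The remaining assertions are then immediate. If $\overline{\mathrm{Orb}(\omega_0)}=\Omega$, then for every $\omega\in\Omega$, every $E\in\sigma(J_\omega)$, and every $\e>0$ one can choose $n$ with $T^n\omega_0\in B_\delta(\omega)$, whence $E\in B_\e[\sigma(J_{\omega_0})]$; letting $\e\downarrow 0$ and using that $\sigma(J_{\omega_0})$ is closed gives $E\in\sigma(J_{\omega_0})$, so $\sigma(J_\omega)\subseteq\sigma(J_{\omega_0})$. If moreover $T$ is minimal, every orbit is dense, so this inclusion holds for every ordered pair of base points, forcing $\sigma(J_\omega)=\sigma(J_{\omega'})$ for all $\omega,\omega'\in\Omega$.

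I expect the only delicate point to be the transplantation estimate: one must observe that, because $u$ is supported on a fixed finite window, just finitely many of the ``coefficient functions'' $\omega'\mapsto a(T^k\omega')$ and $\omega'\mapsto b(T^k\omega')$ enter, so that a single $\delta$ works uniformly in $n$; the rest is a routine application of the already-recorded facts \eqref{eq:weyl} and \eqref{eq:inverseOperatorNorm} about self-adjoint operators.
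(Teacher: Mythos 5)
Your proof is correct and follows essentially the same route as the paper's: a finitely supported Weyl approximate eigenfunction for $J_\omega$ is shifted and, by uniform continuity of $a$ and $b$ composed with the finitely many relevant powers of $T$, becomes an approximate eigenfunction controlling $d(E,\sigma(J_{\omega_0}))$ via $\|(H-E)^{-1}\|^{-1}=d(E,\sigma(H))$. The only cosmetic difference is that you build the shift into the test vector and estimate against $J_{\omega_0}$ directly, whereas the paper estimates against $J_{T^m\omega_0}$ and then invokes unitary equivalence with $J_{\omega_0}$; you also spell out the truncation step the paper leaves implicit.
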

	\begin{proof}
		 By Weyl's criterion, it is straightforward to see for the given $\omega$, $\varepsilon$, and $E\in\sigma(J_\omega)$, we have $\|(J_\omega-E)\psi\|<\varepsilon$ for some finitely supported unit vector $\psi\in\ell^2(\Z)$. Define $\psi^{(n)}(j):=\psi(j+n)$, we see for each $n\in\Z$ it holds that
		 $$
		 \|(J_{T^n\omega}-E)\psi^{(n)}\|<\varepsilon.
		 $$
	
	By uniform continuity of $a$ and $b$ on $\Omega$ and the fact that the length of the support of $\phi^{(n)}$ is independent of $n$, there exists a $\delta=\delta(\omega,E,\e)>0$ such that the following holds true: if $d(\omega',T^n\omega)<\delta$, then
		$
		\|(J_{\omega'}-E)(\psi^{(n)})\|<\e.
		$
		In particular, if $\mathrm{Orb}(\omega_0)\cap B_\delta(\omega)\neq\varnothing$, then there is some $m\in\Z$ so that $d(T^m\omega_0,T^n\omega)<\delta$ which implies
		$$
		\|(J_{T^m\omega_0}-E)(\psi^{(n)})\|<\varepsilon.
		$$
Now, either $E\in\sigma(J_{T^m\omega_0})$; or the above inequality clearly implies 
		$$
		\|(J_{T^m\omega_0}-E)^{-1}\|>1/\varepsilon,
		$$
		which implies
		$$
		d(E,\sigma(J_{T^m\omega_0}))=\|(J_{T^m\omega_0}-E)^{-1}\|^{-1}<\e.
		$$
				In any case, we obtain $E\in B_\varepsilon[\sigma(J_{T^m\omega_0})]=B_\varepsilon[\sigma(J_{\omega_0})]$, where the last equality is a consequence of the fact that $J_{T^m\omega_0}$ and $J_{\omega_0}$ are unitarily equivalent. 
		\end{proof}
	
We are ready to prove Theorem~\ref{t.jacobi_johnson}.
\begin{proof}[Proof of Theorem~\ref{t.jacobi_johnson}]
	We let $\overline{\mathrm{Orb}(\omega_0)}=\Omega$. If $(T,B^E)\in\CD\CS$, then $E\in\cap_{\omega}\rho(J_\omega)=\rho(J_{\omega_0})$ by Corollary~\ref{c:jacobi_johnson}. On the other hand, if $E\in \rho(J_{\omega_0})$, then $d(E,\sigma(J_{\omega}))\ge \delta:=d(E,\sigma(J_{\omega_0}))$ for all $\omega\in\Omega$. By Theorem~\ref{t:johnson_sequence},
	 $$
	B^E_\omega(\cdot)=B^E(T^{(\cdot)}\omega):\Z\to\M(2,\C)\in\CD\CS \mbox { for all }\omega\in\Omega.$$ 
	Let $E^s_\omega$ and $E^u_\omega$ be the invariant directions of $B^E_\omega$. Thus, we may define
	$$
	E^s(\omega)=E^s_\omega(0)\mbox{ and }E^u(\omega)=E^u_\omega(0).
	$$
	Clearly, they are two invariant directions of $(T,B^E)$. In other words, they satisfy condition (2) of Definition~\ref{d:domination_dynamical}. By Lemmas ~\ref{l.Bdomination} and ~\ref{l.separationofdir}, the constants $\delta(B^E_\omega)$ and $N(B^E_\omega)$ of $B^E_\omega$ depend on $\delta$ only. In particular, there are $\delta$ and $N$ such that the above defined $E^s$ and $E^u$ satisfy condition (3) of Definition~\ref{d:domination_dynamical} and \eqref{eq:unif_separation_us_dyn} as well. By Lemma~\ref{l:equiv_conditions_DS_dyna}, we obtain $(T,B^E)\in\CD\CS$.
	\end{proof}

\end{document}